\DeclareSymbolFontAlphabet{\mathbb}{AMSb}
\DeclareSymbolFontAlphabet{\mathbbl}{bbold}
\newcommand{\one}{\mathbbl{1}}
\setlist[itemize]{leftmargin=*}
\DeclareMathOperator{\var}{Var}
\DeclareMathOperator{\cov}{Cov}
\renewcommand{\phi}{\varphi}
\newcommand{\bw}{\mathbf{w}}
\newcommand{\bW}{\mathbf{W}}
\newcommand{\bz}{\mathbf{z}}
\newcommand{\bZ}{\mathbf{Z}}
\newcommand{\bV}{\mathbb{V}}
\newcommand{\cZ}{\mathcal{Z}}
\newcommand{\ex}{\mathbb{E}}
\newcommand{\avar}{\mathbb{V}_{\! \Delta}}
\newcommand{\ind}{\,\perp \! \! \! \perp\,}
\newcommand{\indP}{\,{\perp \! \! \! \perp}_P\,}
\newcommand{\given}{\,|\,}
\newcommand{\real}{\mathbb{R}}
\newcommand{\ol}{\overline}
\newcommand{\pa}[1]{\left(#1\right)}
\newtheorem{thm}{Theorem}[section]
\newtheorem{prop}[thm]{Proposition}
\newtheorem{lem}[thm]{Lemma}
\newtheorem{cor}[thm]{Corollary}
\newtheorem{asm}{Assumption}[section]
\theoremstyle{definition}
\newtheorem{dfn}[thm]{Definition}
\newtheorem{exm}[thm]{Example}
\theoremstyle{remark}
\newtheorem{rmk}[thm]{Remark}
\newcommand\inner[2]{\langle #1, #2 \rangle}
\title[Adjustment for complex covariates]{Efficient adjustment for complex covariates: \\
Gaining efficiency with DOPE
}
\date{\today}
\author[A. M. Christgau]{Alexander Mangulad Christgau$^\sharp$}
\email{amc@math.ku.dk}
\author[A. R. Lundborg]{Anton Rask Lundborg$^\sharp$}
\email{arl@math.ku.dk}
\author[N. R. Hansen]{Niels Richard Hansen$^\sharp$}
\thanks{}
\email{niels.r.hansen@math.ku.dk}
\address{$^\sharp$Department of Mathematical Sciences, University of Copenhagen \newline
Universitetsparken 5, Copenhagen, 2100, Denmark}
\begin{document}

\begin{abstract}
  Covariate adjustment is a ubiquitous method used to estimate the average
  treatment effect (ATE) from observational data. Assuming a known graphical
  structure of the data generating model, recent results give graphical criteria
  for optimal adjustment, which enables efficient estimation of the ATE. However,
  graphical approaches are challenging for high-dimensional and complex data, and
  it is not straightforward to specify a meaningful graphical model of
  non-Euclidean data such as texts. We propose a new framework that
  accommodates adjustment for any subset of information expressed by the
  covariates, and we show that the information that is minimally sufficient
  for prediction of the outcome given the treatment is also most efficient 
  for adjustment.

  Based on our theoretical results, we propose the Debiased Outcome-adapted
  Propensity Estimator (DOPE) for efficient estimation of the ATE, and we provide
  asymptotic results for DOPE under general conditions. Compared to
  the augmented inverse propensity weighted (AIPW) estimator, DOPE can
  retain its efficiency even when the covariates are highly predictive of
  treatment. We illustrate this with a single-index model, and with an
  implementation of DOPE based on neural networks, we demonstrate its
  performance on simulated and real data. Our results show that DOPE
  provides an efficient and robust methodology for ATE estimation in various
  observational settings.
\end{abstract}

\maketitle

\section{Introduction}
Estimating the population average treatment effect (ATE) of a treatment on an
outcome variable is a fundamental statistical task. A naive approach is to
contrast the mean outcome of a treated population with the mean outcome of an
untreated population. Using observational data this is, however, generally a
flawed approach due to confounding. If the underlying confounding mechanisms are
captured by a set of pre-treatment covariates $\bW$, it is possible to adjust
for confounding by conditioning on $\bW$ in a certain manner. 
Given that multiple subsets of $\bW$ may be valid for this adjustment, it is
natural to ask if there is an `optimal adjustment subset' that enables the most
efficient estimation of the ATE.

Assuming a causal linear graphical model, \citet{henckel2022graphical}
established the existence of -- and gave graphical criteria for -- an
\emph{optimal adjustment set} for the OLS estimator.
\citet{rotnitzky2020efficient} extended the results of
\citet{henckel2022graphical}, by proving that the optimality was valid within
general causal graphical models and for all regular and asymptotically linear
estimators. 
Critically, this line of research assumes knowledge of the underlying graphical structure. 

To accommodate the assumption of \textit{no unmeasured confounding},
observational data is often collected with as many covariates as possible, which
means that $\bW$ can be high-dimensional. In such cases, assumptions of a known
graph are unrealistic, and graphical estimation methods are statistically
unreliable \citep{uhler2013geometry,shah2020hardness,chickering2004large}.
Furthermore, for non-Euclidean data such as images or texts, it is not clear how
to impose any graphical structure pertaining to causal relations. Nevertheless,
we can in these cases still imagine that the information that $\bW$ represents
can be separated into distinct components that affect treatment and outcome
directly, as illustrated in Figure~\ref{fig:semistructuredW}.
\begin{figure}
    \centering
      \begin{tikzpicture}[node distance=1.5cm, thick, roundnode/.style={circle, draw, inner sep=1pt,minimum size=7mm},
        squarenode/.style={rectangle, draw, inner sep=1pt, minimum size=7mm}]
        \node[roundnode] (T) at (0,-0.3) {$T$};
        \node (t) at (0,-0.78) {\scriptsize Treatment};
        \node[roundnode] (Y) at (2,-0.3) {$Y$};
        \node (y) at (2,-0.78) {\scriptsize Outcome};
        \node (Z) at (1,1.85) {$\bW$};
        \node[roundnode,dashed] (Z1) at (-0.5,1.4) {$\mathbf{C}_1$};
        \node[roundnode,dashed] (Z2) at (1,1.1) {$\mathbf{C}_2$};
        \node[roundnode,dashed] (Z3) at (2.5,1.4) {$\mathbf{C}_3$};
    
        \draw (1,1.35) ellipse (2.5cm and 0.9cm);
        \path [-latex,draw,thick, blue] (T) edge [bend left = 0] node {} (Y);
        \path [-latex,draw,thick] (Z1) edge [bend left = 0] node {} (T);
        \path [-latex,draw,thick] (Z2) edge [bend left = 0] node {} (T);
        \path [-latex,draw,thick] (Z2) edge [bend left = 0] node {} (Y);
        \path [-latex,draw,thick] (Z3) edge [bend left = 0] node {} (Y);
      \end{tikzpicture}
    \caption{The covariate $\bW$ can have a complex data structure, even if the information it represents is structured and can be categorized into
    components that influence treatment and outcome separately.}
    \label{fig:semistructuredW}
\end{figure}

In this paper, we formalize this idea and formulate a novel and general
adjustment theory with a focus on efficiency of the
estimation of the average treatment effect. Based on this adjustment theory, we
propose a general estimation procedure and analyze its asymptotic behavior. 

\subsection{Setup}
Throughout we consider a discrete treatment variable $T\in \mathbb{T}$, a
square-integrable outcome variable $Y\in \real$, and pre-treatment covariates
$\bW \in \mathbb{W}$. For now we only require that $\mathbb{T}$ is a finite set
and that $\mathbb{W}$ is a measurable space. The joint distribution of
$(T,\bW,Y)$ is denoted by $P$ and it is assumed to belong to a collection of
probability measures~$\mathcal{P}$. Formally, we define the triplet $(T, \bW,
Y)$ as a measurable function from a background measurable space $(\Omega,
\mathbb{F})$ equipped with a family of probability measures $(\mathbb{P}_P)_{P
\in \mathcal{P}}$ such that for every $P \in \mathcal{P}$, the distribution of
$(T,\bW,Y)$ is $P$ when the background measure is $\mathbb{P}_P$. If needed, we
denote probability statements and expectations by $\mathbb{P}_P$ and $\ex_P$,
respectively, but we will usually omit the subscript $P$ for ease of notation.

Our model-free target parameters of interest are of the form
\begin{align}\label{eq:adjustedmean}
    \mu_t
    \coloneq \ex[\ex[Y\given T=t,\bW]]
    = \ex\left[
        \frac{\one(T=t)Y}{\mathbb{P}(T=t \given \bW)}
    \right],
    \qquad t\in \mathbb{T}.
\end{align}
In words, these are treatment specific means of the outcome when adjusting for
the covariate $\bW$. To ensure that this quantity is well-defined, we assume the
following condition, commonly known as \textit{positivity}.
\begin{asm}[Positivity] \label{asm:positivity}
    It holds that $0<\mathbb{P}(T=t \given \bW)<1$ almost surely for each
    $t\in\mathbb{T}$.
\end{asm}

Under additional assumptions common in causal inference literature -- which
informally entail that $\bW$ captures all confounding -- the target parameter
$\mu_t$ has the interpretation as the \emph{interventional mean}, which is
expressed by $\ex[Y\given \mathrm{do}(T=t)]$ in \textit{do-notation} or by
$\ex[Y^{t}]$ using \textit{potential outcome} notation
\citep{peters2017elements,van2011targeted}. Under such causal assumptions, the
average treatment effect is identified, and when $\mathbb{T} = \{0,1\}$ it is
typically expressed as the contrast $\mu_1-\mu_0$. 
The theory in this paper is agnostic with regards to whether or not
$\mu_t$ has this causal interpretation, although it is the
primary motivation for considering $\mu_t$ as a target
parameter.

Given $n$ i.i.d. observations of $(T,\bW,Y)$, one may estimate
$\mu_t$ by estimating either of the equivalent expressions
for $\mu_t$ in Equation \eqref{eq:adjustedmean}. Within
parametric models, the outcome regression function 
$$
    g(t,\bw) = \ex[Y\given T=t,\bW=\bw]
$$ 
can typically be estimated with a $\sqrt{n}$-rate. In this case, the sample mean
of the estimated regression function yields a $\sqrt{n}$-consistent estimator of
$\mu_t$ under Donsker class conditions or sample splitting.
However, many contemporary datasets indicate that parametric model-based
regression methods get outperformed by nonparametric methods such as boosting
and neural networks \citep{bojer2021kaggle}.

Nonparametric estimators of the regression function typically converge at rates
slower than $\sqrt{n}$, and likewise for estimators of the propensity score 
$$
    m(t\mid \bw) = \mathbb{P}(T=t\given \bW=\bw).
$$
Even if both nonparametric estimators have rates slower than $\sqrt{n}$, it is
in some cases possible to estimate $\mu_t$ at a $\sqrt{n}$-rate by modeling both
$m$ and $g$, and then combining their estimates in a way that achieves `rate
double robustness' \citep{smucler2019unifying}. That is, an estimation error of
the same order as the product of the errors for $m$ and $g$. Two prominent
estimators that have this property are the Augmented Inverse Probability
Weighted estimator (AIPW) and the Targeted Minimum Loss-based Estimator (TMLE)
\citep{robins1995semiparametric,chernozhukov2018,van2011targeted}.

In what follows, the premise is that even with a $\sqrt{n}$-rate estimator of
$\mu_t$, it might still be intractable to model $m$ -- and
possibly also $g$ -- as a function of $\bW$ directly. This can happen for
primarily two reasons: (i) the sample space $\mathbb{W}$ is high-dimensional or
has a complex structure, or (ii) the covariate is highly predictive of
treatment, leading to unstable predictions of the inverse propensity score
$\mathbb{P}(T=t\given \bW)^{-1}$.

In either of these cases, which are not exclusive, we can try to manage these
difficulties by instead working with a \emph{representation} $\bZ = \phi(\bW)$,
given by a measurable mapping $\phi$ from $\mathbb{W}$ into a more tractable
space such as Euclidean space. In the first case above,
such a representation might be a pre-trained word embedding, e.g., the
celebrated BERT and its offsprings \citep{devlin2018bert}. The second case has
been well-studied in the special case where $\mathbb{W}=\real^k$ and where
$\mathcal{P}$ contains the distributions that are consistent with respect to a
fixed DAG (or CPDAG). We develop a general theory that subsumes both cases, and
we discuss how to represent the original covariates to efficiently estimate the
adjusted mean $\mu_t$.

\subsection{Relations to existing literature}
Various studies have explored the adjustment for complex data structures by
utilizing a (deep) representation of the covariates, as demonstrated in works
such as \citet{shi2019adapting,veitch2020adapting}. In a different research
direction, the \textit{Collaborative TMLE} \citep{van2010collaborative} has
emerged as a robust method for estimating average treatment effects by
collaboratively learning the outcome regression and propensity score,
particularly in scenarios where covariates are highly predictive of treatment
\citep{ju2019scalable}. Our overall estimation approach shares similarities with
the mentioned strategies; for instance, our proof-of-concept estimator in the
experimental section employs neural networks with shared layers. However, unlike
the cited works, it incorporates the concept of efficiently tuning the
representation specifically for predicting outcomes, rather than treatment.
Related to this idea is another interesting line of research, which builds upon
the \textit{outcome adapted lasso} proposed by \citet{shortreed2017outcome}.
Such works include
\citet{ju2020robust,benkeser2020nonparametric,greenewald2021high,balde2023reader}.
These works all share the common theme of proposing estimation procedures that
select covariates based on $L_1$-penalized regression onto the outcome, and then
subsequently estimate the propensity score based on the selected covariates
adapted to the outcome. The theory of this paper generalizes the particular
estimators proposed in the previous works, and also allows for other feature
selection methods than $L_1$-penalization. Moreover, our generalization of
(parts of) the efficient adjustment theory from \citet{rotnitzky2020efficient}
allows us to theoretically quantify the efficiency gains from these estimation
methods. Finally, our asymptotic theory considers a novel regime, which,
according to the simulations, seems more adequate for describing the finite
sample behavior than the asymptotic results of \citet{benkeser2020nonparametric}
and \citet{ju2020robust}.

Our general adjustment results in Section~\ref{sec:informationbounds}
draw on the vast literature on classical adjustment and confounder
selection, for example
\citet{rosenbaum1983central,hahn1998role,henckel2022graphical,rotnitzky2020efficient,guo2022confounder,perkovic2018complete,peters2017elements,forre2023mathematical}.
In particular, two of our results are direct extensions of results from
\citet{rotnitzky2020efficient,henckel2022graphical}.

\subsection{Organization of the paper}
In Section~\ref{sec:adjustment} we discuss generalizations of classical
adjustment concepts to abstract conditioning on information. In
Section~\ref{sec:informationbounds} we discuss information bounds in the
framework of Section \ref{sec:adjustment}. In Section~\ref{sec:estimation} we
propose a novel method, DOPE, for efficient estimation of adjusted
means, and we discuss the asymptotic behavior of the resulting estimator. In
Section~\ref{sec:experiments} we implement DOPE and demonstrate its
performance on synthetic and real data. The paper is concluded by a discussion
in Section~\ref{sec:discussion}.

\section{Generalized adjustment concepts}\label{sec:adjustment} In this section
we discuss generalizations of classical adjustment concepts. These
generalizations are motivated by the premise from the introduction: it might be
intractable to model the propensity score directly as a function of~$\bW$, so
instead we consider adjusting for a representation $\bZ=\phi(\bW)$. This is, in
theory, confined to statements about conditioning on $\bZ$, and is therefore
equivalent to adjusting for any representation of the form $\widetilde{\bZ} =
\psi \circ \phi (\bW)$, where $\psi$ is a bijective and bimeasurable mapping.
The equivalence class of such representations is characterized by the
$\sigma$-algebra generated by~$\bZ$, denoted by $\sigma(\bZ)$, which informally
describes the information contained in $\bZ$. In view of this, we define
adjustment with respect to sub-$\sigma$-algebras contained in
$\sigma(\bW)$. 

\begin{rmk}\label{rmk:conditional} 
Conditional expectations and probabilities are, unless otherwise indicated,
defined conditionally on $\sigma$-algebras as in
\citet{kolmogoroff1933grundbegriffe}, see also \citet[Ch.
8]{kallenberg2021foundations}. Equalities between conditional expectations are
understood to hold almost surely. When conditioning on the random variable $T$
and a $\sigma$-algebra $\cZ$ we write `$\given T,\cZ$' as a shorthand for
`$\given \sigma(T)\vee \cZ$'. Finally, we define conditioning on both the event
$(T=t)$ and on a $\sigma$-algebra $\cZ\subseteq \sigma(\bW)$ by
$
  \ex[Y \given T=t, \cZ] 
  \coloneq \frac{\ex[Y \one(T=t) \given \cZ]
      }{\mathbb{P}(T=t \given \cZ)},
$
which is well-defined under Assumption \ref{asm:positivity}.
\end{rmk}

\begin{dfn}\label{def:adjustment} 
A sub-$\sigma$-algebra $\cZ\subseteq \sigma(\bW)$ is called a 
\emph{description} of $\bW$. For each $t\in \mathbb{T}$ and $P\in\mathcal{P}$, 
and with $\cZ$ a description of $\bW$, we define
\begin{align*}
  \pi_t(\cZ; P) &\coloneq \mathbb{P}_P(T=t\given \cZ), \\
  b_t(\cZ; P) &\coloneq \ex_P[Y\given T=t, \cZ], \\
  \mu_t(\cZ;P) &\coloneq \ex_P[b_t(\cZ;P)].
\end{align*}
If a description $\cZ$ of $\bW$ is given as $\cZ = \sigma(\bZ)$ for a
representation $\bZ=\phi(\bW)$, we may write
$\mu_t(\bZ;P)$ instead of
$\mu_t(\sigma(\bZ);P)$ etc. 
We say that  
\begin{align*}
  \textnormal{$\cZ$ is $P$-valid if:} 
      &\qquad
      \mu_t(\cZ; P) = \mu_t(\bW; P), &\text{ for all } 
      t \in \mathbb{T}, \\
  \textnormal{$\cZ$ is $P$-OMS if:} 
      &\qquad
      b_t(\cZ; P) = b_t(\bW; P), &\text{ for all } 
      t \in \mathbb{T}, \\
  \textnormal{$\cZ$ is $P$-ODS if:} 
      &\qquad
      Y \indP \bW \given T,\cZ.
\end{align*}
Here OMS means \emph{Outcome Mean Sufficient} and ODS means 
\emph{Outcome Distribution Sufficient}.
If $\cZ$ is $P$-valid for all $P\in \mathcal{P}$, we say that it is $\mathcal{P}$-valid. We define $\mathcal{P}$-OMS and $\mathcal{P}$-ODS analogously. 
\end{dfn}
A few remarks are in order.
\begin{itemize}
  \item We have implicitly extended the shorthand notation described in Remark
  \ref{rmk:conditional} to the quantities in Definition \ref{def:adjustment}. 

  \item The quantity $\mu_t(\cZ; P)$ is deterministic,
  whereas $\pi_t(\cZ;P)$ and $b_t(\cZ;P)$ are $\cZ$-measurable real valued
  random variables. Thus, if $\cZ$ is generated by a representation $\bZ$, then
  by the Doob-Dynkin lemma \citep[Lemma 1.14]{kallenberg2021foundations} these
  random variables can be expressed as functions of $\bZ$. This fact does not
  play a role until the discussion of estimation in
  Section~\ref{sec:estimation}.

  \item There are examples of descriptions $\cZ \subseteq \sigma(\bW)$ that are
  not given by representations. Such descriptions might be of little practical
  importance, but our results do not require $\cZ$ to be given by a
  representation. We view the $\sigma$-algebraic framework as a convenient
  abstraction that generalizes equivalence classes of representations. 

  \item We have the following hierarchy of the properties in
  Definition~\ref{def:adjustment}:
  \begin{align*}
  P\textnormal{-ODS} 
  \Longrightarrow P \textnormal{-OMS} 
  \Longrightarrow P \textnormal{-valid},
  \end{align*}
  and the relations also hold if $P$ is replaced by $\mathcal{P}$.

  \item The $P$-ODS condition relates to existing concepts in statistics. The
  condition can be viewed as a $\sigma$-algebraic analogue of \textit{prognostic
  scores} \citep{hansen2008prognostic}, and it holds that any prognostic score
  generates a $P$-ODS description. Moreover, if a description $\cZ =
  \sigma(\bZ)$ is $P$-ODS, then its generator $\bZ$ is \textit{$c$-equivalent}
  to $\bW$ in the sense of \citet{pearl2009causality}, see in particular the
  claim following his Equation (11.8). In Remark~\ref{rmk:minimalsufficiency} we
  discuss the relations between $\mathcal{P}$-ODS descriptions and classical
  statistical sufficiency.
\end{itemize}

The notion of $\mathcal{P}$-valid descriptions can be viewed as a generalization
of valid adjustment sets, where subsets of variables are replaced with
sub-$\sigma$-algebras.

\begin{exm}[Comparison with adjustment sets in causal DAGs]
  Suppose $\bW \in \real^k$ and let $\mathcal{D}$ be a DAG on the nodes
  $\mathbf{V}=(T,\bW,Y)$. Let $\mathcal{P}=\mathcal{M}(\mathcal{D})$ be the
  collection of continuous distributions (on $\real^{k+2}$) that are Markovian
  with respect to $\mathcal{D}$ and with $\ex|Y|<\infty$.

  Any subset $\bZ\subseteq \bW$ is a representation of $\bW$ given by a
  coordinate projection, and the corresponding $\sigma$-algebra $\sigma(\bZ)$ is
  a description of $\bW$. In this framework, a subset $\bZ\subseteq \bW$ is
  called a \textit{valid adjustment set} for $(T,Y)$ if for all
  $P\in\mathcal{P}$, $t\in \mathbb{T}$, and $y\in \real$
  \begin{align*}
    \ex_P\left[
        \frac{\one(T=t)\one(Y\leq y)}{\mathbb{P}_P(T=t\given \mathrm{pa}_{\mathcal{D}}(T))}
        \right]
    = \ex_P[\mathbb{P}_P(Y\leq y\given T=t, \bZ)],
  \end{align*}
  where $\mathrm{pa}_{\mathcal{D}}$ denotes the parents of $T$ in
  $\mathcal{D}$, see for example Definition 2 in
  \citet{rotnitzky2020efficient}.
  It turns out that $\bZ$ is a valid adjustment set if and only if 
  \begin{align*}
    \ex_P\left[
        \frac{\one(T=t)Y}{\mathbb{P}_P(T=t\given \mathrm{pa}_{\mathcal{D}}(T))}
        \right]
    = \ex_P[\ex_P[Y\given T=t, \bZ]] 
    = \mu_t(\cZ; P)
  \end{align*}
  for all $P \in\mathcal{P}$ and $t\in \mathbb{T}$. This follows\footnote{We
  thank Leonard Henckel for pointing this out.} from results of
  \citet{perkovic2018complete}, which we discuss for completeness in
  Proposition~\ref{prop:adjset} in the supplement. Thus, if we assume that $\bW$
  is a valid adjustment set, then any subset $\bZ\subseteq \bW$ is a valid
  adjustment set if and only if the corresponding description $\sigma(\bZ)$ is
  $\mathcal{P}$-valid. 
\end{exm}

In general, $\mathcal{P}$-valid descriptions are not necessarily generated from
valid adjustment sets. This can happen if structural assumptions are imposed on
the conditional mean, which is illustrated in the following example.

\begin{exm}\label{exm:magnitude} 
  Suppose that the outcome regression is known to
  be invariant under rotations of $\bW \in \real^{k}$ such that for any $P\in
  \mathcal{P}$,
  \begin{align*}
    \ex_P[Y \given T,\bW] = \ex_P[Y \given T,\|\bW\|\,] \eqcolon g_P(T,\|\bW\|).
  \end{align*}
  Without further graphical assumptions, we cannot deduce that any proper subset of $\bW$ is a valid adjustment set. In contrast, the magnitude $\|\bW\|$ generates a $\mathcal{P}$-OMS -- and hence also $\mathcal{P}$-valid -- description of $\bW$ by definition.
  
  Suppose that there is a distribution $\tilde{P}\in
  \mathcal{P}$ for which $g_{\tilde{P}}(t,\cdot)$ is
  bijective. Then $\sigma(\|\bW\|)$ is also the smallest $\mathcal{P}$-OMS
  description up to $\mathbb{P}_{\tilde{P}}$-negligible
  sets: if $\cZ$ is another $\mathcal{P}$-OMS description, we see that $b_t(\cZ;
  \tilde{P}) = b_t(\bW;
  \tilde{P}) =
  g_{\tilde{P}}(t,\|\bW\|)$ almost surely. Hence 
  $$
      \sigma(\|\bW\|) =\sigma(g_{\tilde{P}}(t,\|\bW\|))  
      \subseteq \ol{\sigma(b_t(\cZ; \tilde{P}))} 
      \subseteq \ol{\cZ},
  $$
  where overline denotes the $\mathbb{P}_{\tilde{P}}$-completion of a
  $\sigma$-algebra, i.e., $\ol{\cZ}$ is the smallest
  $\sigma$-algebra containing $\cZ$ and all its
  $\mathbb{P}_{\tilde{P}}$-negligible sets.
\end{exm}
    

Even in the above example, where the regression function is known to depend on a
one-dimensional function of $\bW$, it is generally not possible to estimate the
regression function at a $\sqrt{n}$-rate without restrictive assumptions. Thus,
modeling of the propensity score is required in order to obtain a
$\sqrt{n}$-rate estimator of the adjusted mean. If $\bW$ is highly predictive of
treatment, then naively applying a doubly robust estimator (AIPW, TMLE) can be
statistically unstable due to large inverse propensity weights. Alternatively,
since $\sigma(\|\bW\|)$ is a $\mathcal{P}$-valid description, we could also base
our estimator on the \textit{pruned propensity} $\mathbb{P}(T=t \given
\|\bW\|)$. This approach should intuitively provide more stable weights, as we
expect $\|\bW\|$ to be less predictive of treatment. We proceed to analyze the
difference between the asymptotic efficiencies of the two approaches and we show
that, under reasonable conditions, the latter approach is never worse
asymptotically.


\section{Estimation efficiency for adjusted means}
\label{sec:informationbounds}
We now discuss estimation efficiency based on the
concepts introduced in Section~\ref{sec:adjustment}. To this end, let $\bZ =
\phi(\bW)$ be a representation of $\bW$ and define
\begin{equation}\label{eq:IF}
  \psi_{t}(\bZ; P)
      = b_t(\bZ; P) + \frac{\one(T=t)}{\pi_t(\bZ; P)}(Y-b_t(\bZ; P))
          -\mu_t(\bZ;P),
\end{equation}
as the \emph{influence function}. For specific choices of 
$\mathcal{P}$, $\psi_t(\bZ; P)$ \emph{is} the influence function
of the estimand $\mu_t(\bZ; P)$ in $\mathcal{P}$ 
(sometimes called the influence curve or canonical gradient), see
\citep{hines2022demystifying} and Section~\ref{sec:efficiencybounds} in 
the supplementary material. 
Using the formalism of Section \ref{sec:adjustment}, we define $\psi_{t}(\cZ;
P)$ analogously to \eqref{eq:IF} for any description $\cZ$ of $\bW$, and we denote
the variance of the influence function by 
\begin{align*}
    \bV_t(\cZ;P) 
    &\coloneq \var_P[\psi_{t}(\cZ; P)] = \ex_P[\psi_t(\cZ; P)^2].
\end{align*}

For a specific representation $\bZ$, and 
 provided that the propensity score $\pi_t(\bZ; P)$ and outcome
regression $b_t(\bZ; P)$ can be estimated with sufficiently fast rates, 
it is possible to construct a regular asymptotically linear (RAL) estimator 
$\widehat{\mu}_t(\mathbf{Z}; P)$ based on $n$ i.i.d. observations such that 
\[
\sqrt{n}(\widehat{\mu}_t(\mathbf{Z}; P) - \mu_t(\bZ; P)) - \frac{1}{\sqrt{n}} \sum_{i=1}^n \psi_t(\bZ_i; P) \overset{P}{\to} 0.
\]
Indeed, the AIPW estimator \citep{chernozhukov2018} and TMLE \citep{van2011targeted} 
are examples. Following, e.g., \cite{newey1994}, $\psi_t(\bZ; P)$ 
is the influence function of such estimators, which justifies our terminology. 
Importantly, the variance $\bV_t(\bZ;P)$ is the asymptotic variance of 
those estimators.

Now if $\sigma(\bZ)$ is a $\mathcal{P}$-valid description of $\bW$, $\mu_t
\coloneq \mu_t(\bW;P) = \mu_t(\bZ; P)$ for all $P$, and it is natural to ask if
we should estimate $\mu_t$ by adjusting for $\bW$ or the representation $\bZ$?
We investigate this question in terms of the asymptotic variances $\bV_t(\bZ ;
P)$ and $\bV_t(\bW ; P)$. That is, in terms of the asymptotic relative
efficiency of RAL estimators with influence functions $\psi_t(\bZ; P)$ and
$\psi_t(\bW; P)$, respectively. We discuss how our results are related to
asymptotic efficiency bounds in Section~\ref{sec:efficiencybounds} in the
supplementary material.

We formulate our results in terms of the more general contrast parameter
\begin{equation}\label{eq:contrasttarget}
  \Delta = \Delta(\bW ; P) = \sum_{t\in \mathbb{T}} c_t \mu_t(\bW; P),
\end{equation}
where $\mathbf{c}\coloneq (c_t)_{t\in \mathbb{T}}$ are fixed real-valued
coefficients. The prototypical example, when $\mathbb{T} = \{0,1\}$, is $\Delta
=\mu_1 - \mu_0$, which is the average
treatment effect under causal assumptions, cf. the discussion following
Assumption~\ref{asm:positivity}. Note that the family of $\Delta$-parameters
includes the adjusted mean $\mu_t$ as a special case.

To estimate $\Delta$, we consider estimators of the form
\begin{equation}\label{eq:hatDelta}
  \widehat \Delta (\cZ;P) =
      \sum_{t\in \mathbb{T}} c_t \widehat{\mu}_t(\cZ;P),
\end{equation}
where $\widehat{\mu}_t(\cZ;P)$ denotes a consistent RAL estimator of
$\mu_t(\cZ;P)$ with influence function $\psi_t(\cZ; P)$. Correspondingly, the
asymptotic variance for such an estimator is
\begin{equation}
  \avar(\cZ;P) \coloneq \var_P\Big(\sum_{t\in \mathbb{T}} c_t\psi_{t}(\cZ; P)\Big).
\end{equation}

It turns out that two central results by \citet{rotnitzky2020efficient},
specifically their Lemmas 4 and 5, can be generalized from covariate subsets to
descriptions. One conceptual difference is that there is 
no natural generalization of
\textit{precision variables} and \textit{overadjustment (instrumental)
variables} -- if $\cZ_1$ and $\cZ_2$ are descriptions,
there is no canonical way\footnote{equivalent conditions to the existence of an
independent complement are given in Proposition 4 in \citet{emery2001vershik}.}
to subtract $\cZ_2$ from $\cZ_1$ in a way that maintains their join $\cZ_1 \vee
\cZ_2 \coloneq \sigma(\cZ_1,\cZ_2)$. Apart from this technical detail, the
proofs translate more or less directly. The following lemma is a direct
extension of \citet[Lemma 4]{rotnitzky2020efficient}.

\begin{lem}[Deletion of overadjustment]\label{lem:overadj} 
  Fix a distribution $P\in \mathcal{P}$ and let $\cZ_1\subseteq \cZ_2$ be $\sigma$-algebras such that $Y\indP \cZ_2 \given T,\cZ_1$. Then it always 
  holds that
  $$
  \avar(\cZ_2;P) - \avar(\cZ_1;P)
  = \sum_{t\in\mathbb{T}} c_t^2 D_t(\cZ_1,\cZ_2;P) \geq 0,
  $$
  where for each $t\in \mathbb{T}$,
  \begin{align*}
      D_t&(\cZ_1,\cZ_2;P) \coloneq \bV_t(\cZ_2 ; P)-\bV_t(\cZ_1 ; P) \\
      &= \ex_P\,\Big[
          \pi_t(\cZ_1;P)
          \var(Y\given T=t,\cZ_1)
          \var\big(\pi_t(\cZ_2;P)^{-1}
              \big| \; T=t,\cZ_1 \big)\Big].
  \end{align*}
  Moreover, if $\cZ_2$ is a description of $\bW$ then $\cZ_1$ is $P$-valid if
  and only if $\cZ_2$ is $P$-valid.
\end{lem}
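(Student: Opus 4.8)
\emph{Proof plan.} The plan is to split the claim into the variance identity (together with its closed form and non-negativity) and the separate $P$-validity equivalence, treating the latter first since it is immediate. Because $\cZ_1\subseteq\cZ_2$ and $Y\indP\cZ_2\given T,\cZ_1$, the conditional law of $Y$ given $(T=t,\cZ_2)$ coincides with that given $(T=t,\cZ_1)$; hence $b_t(\cZ_2;P)=b_t(\cZ_1;P)$ and, taking expectations, $\mu_t(\cZ_2;P)=\mu_t(\cZ_1;P)$ for every $t$. If moreover $\cZ_2\subseteq\sigma(\bW)$, then $\cZ_1$ and $\cZ_2$ are both descriptions with identical adjusted means, so $\cZ_1$ is $P$-valid iff $\mu_t(\cZ_1;P)=\mu_t(\bW;P)$ for all $t$ iff $\mu_t(\cZ_2;P)=\mu_t(\bW;P)$ for all $t$ iff $\cZ_2$ is $P$-valid. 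This disposes of the ``Moreover'' clause.

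For the variance identity, write $b_t\coloneq b_t(\cZ_1;P)=b_t(\cZ_2;P)$ and $W_t\coloneq\pi_t(\cZ_2;P)^{-1}-\pi_t(\cZ_1;P)^{-1}$. Since the outcome-regression and centering terms in \eqref{eq:IF} are identical for $\cZ_1$ and $\cZ_2$, they cancel in the difference, leaving $\psi_t(\cZ_2;P)-\psi_t(\cZ_1;P)=\one(T=t)(Y-b_t)\,W_t$. Abbreviating $\Psi(\cZ)\coloneq\sum_t c_t\psi_t(\cZ;P)$, I would then expand $\avar(\cZ_2;P)-\avar(\cZ_1;P)=2\cov_P\big(\Psi(\cZ_1),\Psi(\cZ_2)-\Psi(\cZ_1)\big)+\var_P\big(\Psi(\cZ_2)-\Psi(\cZ_1)\big)$.

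The key step, and the main obstacle, is to show that both terms reduce to diagonal sums over $t$. The residual $\one(T=t)(Y-b_t)$ has conditional mean zero given $\cZ_2$ (as $b_t=\ex_P[Y\given T=t,\cZ_2]$), and $\one(T=s)\one(T=t)=0$ for $s\neq t$. Conditioning on $\cZ_2$ — legitimate because $W_t$ is $\cZ_2$-measurable — therefore annihilates every cross term $s\neq t$ in both the covariance and the variance, and produces $\ex_P[\one(T=t)(Y-b_t)^2\given\cZ_2]=\pi_t(\cZ_2;P)\var_P(Y\given T=t,\cZ_2)$ on the diagonal. Using $Y\indP\cZ_2\given T,\cZ_1$ once more to replace $\var_P(Y\given T=t,\cZ_2)$ by $\var_P(Y\given T=t,\cZ_1)$, one arrives at $\avar(\cZ_2;P)-\avar(\cZ_1;P)=\sum_t c_t^2 A_t$, where $A_t=\ex_P[\var_P(Y\given T=t,\cZ_1)\,(2W_t\,\pi_t(\cZ_2;P)/\pi_t(\cZ_1;P)+W_t^2\,\pi_t(\cZ_2;P))]$; specialising to $\mathbf{c}=(\one(s=t))_s$ identifies $A_t$ with $\bV_t(\cZ_2;P)-\bV_t(\cZ_1;P)=D_t$.

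Finally, to reach the stated closed form I would simplify $A_t$ by the tower property with $\cZ_1\subseteq\cZ_2$, using $\ex_P[\one(T=t)\pi_t(\cZ_2;P)^{-1}\given\cZ_2]=1$ and $\ex_P[\pi_t(\cZ_2;P)\given\cZ_1]=\pi_t(\cZ_1;P)$. These yield the conditional-moment identities $\ex_P[\pi_t(\cZ_2;P)^{-1}\given T=t,\cZ_1]=\pi_t(\cZ_1;P)^{-1}$ and $\ex_P[\pi_t(\cZ_2;P)^{-2}\given T=t,\cZ_1]=\pi_t(\cZ_1;P)^{-1}\ex_P[\pi_t(\cZ_2;P)^{-1}\given\cZ_1]$, whence the quadratic-in-$W_t$ integrand collapses and $D_t=\ex_P[\pi_t(\cZ_1;P)\var_P(Y\given T=t,\cZ_1)\var_P(\pi_t(\cZ_2;P)^{-1}\given T=t,\cZ_1)]$. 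Non-negativity is then immediate, since the integrand is a product of a conditional probability, a conditional variance of $Y$, and a conditional variance of $\pi_t(\cZ_2;P)^{-1}$, all non-negative, and $c_t^2\geq0$. The conditional independence is essential only for equating the two conditional variances of $Y$; elsewhere only the nesting $\cZ_1\subseteq\cZ_2$ and positivity (Assumption~\ref{asm:positivity}) are needed.
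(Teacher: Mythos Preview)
Your proposal is correct, but it takes a different decomposition from the paper. The paper conditions on $(T,Y,\cZ_1)$ and applies the law of total variance: using the conditional independence $Y\indP\cZ_2\given T,\cZ_1$ together with the identity $\ex_P[\pi_t(\cZ_2)^{-1}\given T=t,\cZ_1]=\pi_t(\cZ_1)^{-1}$, it shows directly that $\ex_P[\psi_t(\cZ_2)\given T,Y,\cZ_1]=\psi_t(\cZ_1)$. Hence $D_t=\ex_P[\var_P(\psi_t(\cZ_2)\given T,Y,\cZ_1)]$ is automatically the variance difference, automatically non-negative, and the closed form drops out in one short computation; the cross terms for $s\neq t$ are killed by observing that $\psi_s(\cZ_2),\psi_t(\cZ_2)$ are conditionally uncorrelated given $(T,Y,\cZ_1)$ because $\one(T=s)\one(T=t)=0$. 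Your route instead conditions on $\cZ_2$, expands $\avar(\cZ_2)-\avar(\cZ_1)=2\cov+\var$ of the increment, and then towers down to $\cZ_1$. This works, but it buys you an extra step: you must separately check (as you do) that the cross-covariance term $\ex_P[W_t\,\pi_t(\cZ_2)/\pi_t(\cZ_1)\given\cZ_1]$ vanishes before collapsing the quadratic in $W_t$. Both arguments hinge on the same inverse-propensity identity (the paper isolates it as a lemma), and your final simplification is equivalent to theirs; the paper's conditioning is simply a more economical packaging because non-negativity and the closed form are obtained in one stroke rather than after cancellation.
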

The lemma quantifies the efficiency lost in adjustment when including
information that is irrelevant for the outcome. 

We proceed to apply this lemma to the minimal information in $\bW$ that is
predictive of $Y$ conditionally on $T$. To define this information, we use the
regular conditional distribution function of $Y$ given $T=t,\bW=\bw$, which we
denote by
\begin{align*}
    F(y\given t,\bw; P) \coloneq \mathbb{P}_P (Y\leq y \given T=t,\bW=\bw), 
        \qquad y\in \real, t\in\mathbb{T}, \bw\in \mathbb{W}.
\end{align*}
See \citet[Sec. 8]{kallenberg2021foundations} for a rigorous treatment of
regular conditional distributions. We will in the following, by convention, take
\begin{equation} \label{eq:btw}
  b_t(\bW;P) = \int y\,\mathrm{d}F(y\given t, \bW; P)
\end{equation}
to be the version that is a measurable function of the regular conditional
distribution.

\begin{dfn}\label{def:OutcomeAlgebras}
  Define the $\sigma$-algebras
  \begin{align*}
      \mathcal{Q} &\coloneq \bigvee_{P\in\mathcal{P}} \mathcal{Q}_P,
          \qquad \mathcal{Q}_P \coloneq \sigma(F(y\given t,\bW; P) ; \, y\in \real, t\in \mathbb{T}), \\
      \mathcal{R} &\coloneq \bigvee_{P\in\mathcal{P}} \mathcal{R}_P,
          \qquad \mathcal{R}_P \coloneq \sigma(b_t(\bW; P) ; \, t\in \mathbb{T}).
  \end{align*}
\end{dfn}

\begin{figure}
    \centering
    \begin{tikzpicture}[node distance=1.5cm, thick, roundnode/.style={circle, draw, inner sep=1pt,minimum size=7mm},
      squarenode/.style={rectangle, draw, inner sep=1pt, minimum size=7mm}]
        \node[roundnode] (T) at (0,0) {$T$};
        \node (t) at (0,-0.5) {\scriptsize Treatment};
        \node[roundnode,dashed] (W) at (1.3,1.4) {$\mathcal{Q}$};
        \node (Z) at (0.55,1.4) {$\bW$};
        \node (z) at (0.5,1.15) {};
        \node[roundnode] (Y) at (2,0) {$Y$};
        \node (y) at (2,-0.5) {\scriptsize Outcome};
        
        \draw (1,1.4) ellipse (1cm and 0.5cm);
        
        \path [-latex,draw,thick] (W) edge [bend left = 0] node {} (T);
        \path [-latex,draw,thick] (z) edge [bend left = 0] node {} (T);
        \path [-latex,draw,thick] (W) edge [bend left = 0] node {} (Y);
        \path [-latex,draw,thick, blue] (T) edge [bend left = 0] node {} (Y);
    \end{tikzpicture}
    \caption{The $\sigma$-algebra $\mathcal{Q}$ given in
    Definition~\ref{def:OutcomeAlgebras} as a description of $\bW$, which may include 
    strictly less information than $\sigma(\bW)$ depending on $\mathcal{P}$.}
    \label{fig:COA}
\end{figure}
Note that $\mathcal{Q}_P$, $\mathcal{Q}$, $\mathcal{R}_P$ and $\mathcal{R}$ are
all descriptions of $\bW$, see Figure \ref{fig:COA} for a depiction of the
information contained in $\mathcal{Q}$. Note also that $\mathcal{R}_P \subseteq
\mathcal{Q}_P$ by the convention \eqref{eq:btw}.

In the following theorem we use $\ol{\cZ}^P$ to denote
the $\mathbb{P}_P$-completion of a $\sigma$-algebra $\cZ$, i.e.,
$\ol{\cZ}^P$ is the smallest $\sigma$-algebra containing $\cZ$ and all its
$\mathbb{P}_{P}$-negligible sets. We now state one of the main results of this
section. 
\begin{thm}\label{thm:COSefficiency} 
  Fix $P\in \mathcal{P}$. Then, under Assumption \ref{asm:positivity}, it holds
  that
  \begin{enumerate}
    \item[(i)] If $\cZ$ is a description of $\bW$ and $\mathcal{Q}_P\subseteq
    \ol{\cZ}^P$, then $\cZ$ is $P$-ODS. In particular, $\mathcal{Q}_P$ is a
    $P$-ODS description of $\bW$.
    \item[(ii)] If $\cZ$ is $P$-ODS description, then $\mathcal{Q}_P\subseteq
    \ol{\cZ}^P$.
    \item[(iii)] For any $P$-ODS description $\cZ$ it holds that
    \begin{equation}\label{eq:COApointwise}
        \avar(\cZ; P) - \avar(\mathcal{Q}_P; P) 
        = \sum_{t\in \mathbb{T}} 
            c_t^2 D_t(\mathcal{Q}_P,\cZ; P)
        \geq 0,
    \end{equation}
    where $D_t$ is given as in Lemma~\ref{lem:overadj}.
  \end{enumerate}
\end{thm}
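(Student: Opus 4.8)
The plan is to read parts (i) and (ii) together as a single characterization — a description $\cZ$ is $P$-ODS if and only if it contains $\mathcal{Q}_P$ up to $\mathbb{P}_P$-null sets — and then to obtain (iii) as an immediate application of Lemma~\ref{lem:overadj}. The guiding idea is that $\mathcal{Q}_P$ is exactly the information needed to reconstruct the conditional law of $Y$ given $(T,\bW)$, so that ``$\cZ$ screens $\bW$ off from $Y$ given $T$'' should be equivalent to ``$\cZ$ already carries that conditional law.'' For part (i), I would use that $\cZ\subseteq\sigma(\bW)$ gives $\sigma(T)\vee\cZ\vee\sigma(\bW)=\sigma(T)\vee\sigma(\bW)$, whence $\mathbb{P}(Y\le y\given T,\cZ,\bW)=\mathbb{P}(Y\le y\given T,\bW)=F(y\given T,\bW;P)$. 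Since $\mathbb{T}$ is finite, $F(y\given T,\bW;P)=\sum_t\one(T=t)F(y\given t,\bW;P)$, and the hypothesis $\mathcal{Q}_P\subseteq\ol{\cZ}^P$ makes every summand $\ol{\cZ}^P$-measurable, so the sum is $(\sigma(T)\vee\ol{\cZ}^P)$-measurable. The tower property across $\sigma(T)\vee\cZ\subseteq\sigma(T)\vee\sigma(\bW)$ then gives $\mathbb{P}(Y\le y\given T,\cZ)=\ex[F(y\given T,\bW;P)\given T,\cZ]=F(y\given T,\bW;P)$, because the integrand is already measurable with respect to the conditioning $\sigma$-algebra (and completions leave conditional expectations unchanged). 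Thus $\mathbb{P}(Y\le y\given T,\cZ,\bW)=\mathbb{P}(Y\le y\given T,\cZ)$ for all $y$, i.e.\ $Y\indP\bW\given T,\cZ$; the choice $\cZ=\mathcal{Q}_P$, for which $\mathcal{Q}_P\subseteq\ol{\mathcal{Q}_P}^P$ trivially, yields the ``in particular'' claim.

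For part (ii) I would run the same identity in reverse: the $P$-ODS property gives $F(y\given T,\bW;P)=\mathbb{P}(Y\le y\given T,\cZ)$, whose right-hand side is $\sum_t\one(T=t)\,\mathbb{P}(Y\le y\given T=t,\cZ)$ with each factor $\mathbb{P}(Y\le y\given T=t,\cZ)$ a $\cZ$-measurable random variable by the definition in Remark~\ref{rmk:conditional}. The step I expect to be the main obstacle is that this only pins $F(y\given t,\bW;P)$ down on the event $\{T=t\}$, while $F(y\given t,\bW;P)$ is a function of $\bW$ defined everywhere. This is exactly where positivity must be used: if $U,V$ are $\sigma(\bW)$-measurable with $U=V$ $\mathbb{P}_P$-a.s.\ on $\{T=t\}$, then conditioning $\ex[\one(U\ne V)\one(T=t)]=0$ on $\bW$ gives $\ex[\one(U\ne V)\,\mathbb{P}(T=t\given\bW)]=0$, and $\mathbb{P}(T=t\given\bW)>0$ a.s.\ forces $U=V$ a.s.\ everywhere. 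Hence $F(y\given t,\bW;P)$ equals a $\cZ$-measurable variable up to a null set and is $\ol{\cZ}^P$-measurable. Since $F(\cdot\given t,\bW;P)$ is right-continuous, $\mathcal{Q}_P$ is generated by the countable family $\{F(q\given t,\bW;P):q\in\mathbb{Q},\,t\in\mathbb{T}\}$, so the countably many exceptional null sets combine into one and $\mathcal{Q}_P\subseteq\ol{\cZ}^P$ follows.

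For part (iii) I would apply Lemma~\ref{lem:overadj} with $\cZ_1=\mathcal{Q}_P$ and $\cZ_2=\ol{\cZ}^P$. Part (ii) supplies $\mathcal{Q}_P\subseteq\ol{\cZ}^P$, so $\cZ_1\subseteq\cZ_2$, and the independence hypothesis $Y\indP\cZ_2\given T,\cZ_1$ of the lemma follows from the ``in particular'' statement of (i): $\mathcal{Q}_P$ is $P$-ODS, i.e.\ $Y\indP\bW\given T,\mathcal{Q}_P$, and $\ol{\cZ}^P\subseteq\overline{\sigma(\bW)}^P$. The lemma then gives $\avar(\ol{\cZ}^P;P)-\avar(\mathcal{Q}_P;P)=\sum_t c_t^2 D_t(\mathcal{Q}_P,\ol{\cZ}^P;P)\ge 0$. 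Finally, since $\avar$, $\bV_t$ and $D_t$ are built entirely from conditional expectations and variances, they are invariant under replacing a $\sigma$-algebra by its $\mathbb{P}_P$-completion, so $\avar(\ol{\cZ}^P;P)=\avar(\cZ;P)$ and $D_t(\mathcal{Q}_P,\ol{\cZ}^P;P)=D_t(\mathcal{Q}_P,\cZ;P)$, which delivers \eqref{eq:COApointwise}. The only remaining care is the bookkeeping around completions, but as every quantity is defined through conditional expectations this part is routine.
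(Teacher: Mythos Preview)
Your proposal is correct and follows essentially the same route as the paper: both arguments use that $\mathbb{P}_P(Y\le y\given T,\bW)=\sum_t\one(T=t)F(y\given t,\bW;P)$ is $\sigma(T)\vee\mathcal{Q}_P$-measurable for (i), invoke positivity to pass from equality on $\{T=t\}$ to almost-sure equality for (ii), and apply Lemma~\ref{lem:overadj} with $\cZ_1=\mathcal{Q}_P$ and $\cZ_2=\ol{\cZ}^P$ for (iii). Your treatment is in fact more explicit than the paper's in spelling out the positivity step; the only superfluous detail is the right-continuity/countability reduction in (ii), since once each $F(y\given t,\bW;P)$ agrees almost surely with a $\cZ$-measurable variable it is already $\ol{\cZ}^P$-measurable, and $\mathcal{Q}_P\subseteq\ol{\cZ}^P$ follows directly without collecting null sets.
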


Together parts (i) and (ii) state that a description of $\bW$ is $P$-ODS if and
only if its $\mathbb{P}_P$-completion contains $\mathcal{Q}_P$. Part (iii)
states that, under $P\in \mathcal{P}$, $\mathcal{Q}_P$ leads to the
smallest asymptotic variance among all
$P$-ODS descriptions.

\begin{cor}\label{cor:Qefficiency}
  Let $\cZ$ be a description of $\bW$. Then $\cZ$ is $\mathcal{P}$-ODS if and
  only if $\mathcal{Q}_P \subseteq \ol{\cZ}^P$ for all $P \in \mathcal{P}$.  
  A sufficient condition for $\cZ$ to be $\mathcal{P}$-ODS is that 
  $\mathcal{Q} \subseteq \ol{\cZ}^P$ for all $P \in \mathcal{P}$, in which case
  \begin{equation}\label{eq:COAuniform}
      \avar(\cZ; P) - \avar(\mathcal{Q}; P) 
      = \sum_{t\in \mathbb{T}} 
          c_t^2 D_t(\mathcal{Q},\cZ; P)\geq 0,
          \qquad P\in \mathcal{P}.
  \end{equation}
  In particular, $\mathcal{Q}$ is a $\mathcal{P}$-ODS description of $\bW$, and
  \eqref{eq:COAuniform} holds with $\cZ = \sigma(\bW)$.
\end{cor}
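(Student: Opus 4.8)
The plan is to lift the pointwise Theorem~\ref{thm:COSefficiency} and Lemma~\ref{lem:overadj} to the uniform-in-$P$ setting; since $\mathcal{P}$-ODS means $P$-ODS for every $P\in\mathcal{P}$, almost everything follows by quantifying the pointwise results over $P$, and the only genuine care needed is the bookkeeping of $\mathbb{P}_P$-completions. For the iff characterization I would simply note that, by parts (i) and (ii) of Theorem~\ref{thm:COSefficiency}, for each fixed $P$ a description $\cZ$ is $P$-ODS if and only if $\mathcal{Q}_P\subseteq\overline{\cZ}^P$; demanding this for every $P$ gives that $\cZ$ is $\mathcal{P}$-ODS iff $\mathcal{Q}_P\subseteq\overline{\cZ}^P$ for all $P$.

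For the sufficient condition, I would use that $\mathcal{Q}_P\subseteq\mathcal{Q}$ for all $P$ by definition of the join, so the hypothesis $\mathcal{Q}\subseteq\overline{\cZ}^P$ forces $\mathcal{Q}_P\subseteq\overline{\cZ}^P$, and the iff just established yields $\mathcal{P}$-ODS. Both ``in particular'' claims then drop out: $\mathcal{Q}$ is a description (as recorded after Definition~\ref{def:OutcomeAlgebras}) and trivially satisfies $\mathcal{Q}\subseteq\overline{\mathcal{Q}}^P$, while $\sigma(\bW)$ satisfies $\mathcal{Q}\subseteq\sigma(\bW)\subseteq\overline{\sigma(\bW)}^P$, so both are $\mathcal{P}$-ODS and eligible for \eqref{eq:COAuniform}.

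The efficiency identity \eqref{eq:COAuniform} is the one place where the completions must be handled explicitly, and this is the main (if mild) obstacle: Lemma~\ref{lem:overadj} demands a genuine inclusion $\cZ_1\subseteq\cZ_2$, whereas I only have $\mathcal{Q}\subseteq\overline{\cZ}^P$. The remedy is to apply the lemma to the completions $\cZ_1=\overline{\mathcal{Q}}^P\subseteq\overline{\cZ}^P=\cZ_2$, which is a bona fide inclusion. Every quantity in the statement --- $\pi_t$, $b_t$, hence $\bV_t$, $\avar$, and $D_t$ --- is built from conditional expectations and is therefore invariant under $\mathbb{P}_P$-completion, so $\avar(\cZ;P)=\avar(\overline{\cZ}^P;P)$, $\avar(\mathcal{Q};P)=\avar(\overline{\mathcal{Q}}^P;P)$, and $D_t(\mathcal{Q},\cZ;P)=D_t(\overline{\mathcal{Q}}^P,\overline{\cZ}^P;P)$. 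The remaining hypothesis $Y\indP\overline{\cZ}^P\given T,\overline{\mathcal{Q}}^P$ follows from $\mathcal{Q}$ being $P$-ODS: the conditional independence $Y\indP\bW\given T,\mathcal{Q}$ survives completion and restricts along $\overline{\cZ}^P\subseteq\overline{\sigma(\bW)}^P$. Invoking Lemma~\ref{lem:overadj} then delivers \eqref{eq:COAuniform} together with its nonnegativity. The crux to state carefully is thus the completion-invariance of $\avar$ and $D_t$ and the persistence of the ODS conditional independence under completion --- both routine consequences of conditioning on $\cZ$ and on $\overline{\cZ}^P$ yielding $\mathbb{P}_P$-a.s.\ equal conditional expectations, but exactly what legitimizes trading the completion hypothesis for the honest inclusion required by the lemma.
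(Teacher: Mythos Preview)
Your proposal is correct and follows essentially the same route as the paper: the iff comes from quantifying Theorem~\ref{thm:COSefficiency}(i,ii) over $P$, the sufficient condition from $\mathcal{Q}_P\subseteq\mathcal{Q}$, and \eqref{eq:COAuniform} from Lemma~\ref{lem:overadj} applied to the pair $\mathcal{Q}\subseteq\overline{\cZ}^P$ after noting that $\mathcal{Q}$ is $P$-ODS and that $\avar$ is invariant under completion. The only cosmetic difference is that the paper applies Lemma~\ref{lem:overadj} directly with $\cZ_1=\mathcal{Q}$ (the hypothesis already gives $\mathcal{Q}\subseteq\overline{\cZ}^P$, so no completion of $\mathcal{Q}$ is needed), whereas you also pass to $\overline{\mathcal{Q}}^P$; this extra step is harmless but unnecessary.
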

\begin{rmk}
  It is \textit{a priori} not obvious if $\mathcal{Q}$ is given by a
  representation, i.e., if  $\mathcal{Q}=\sigma(\phi(\bW))$ for some measurable
  mapping $\phi$. In Example \ref{exm:magnitude} it is, since the arguments can
  be reformulated to conclude that $\mathcal{Q} = \sigma(\|\bW\|)$.
\end{rmk}

Instead of working with the entire conditional distribution, it suffices to work
with the conditional mean when assuming, e.g., independent additive noise on the
outcome.
\begin{prop}\label{prop:COMSefficiency}
  For fixed $P\in \mathcal{P}$, then $\mathcal{Q}_P = \mathcal{R}_P$ if any of
  the following statements are true:
  \begin{itemize}
    \item $F(y\given t,\bW;P)$ is $\sigma(b_t(\bW;P))$-measurable for each
    $t\in \mathbb{T},y\in \real$.

    \item $Y$ is a binary outcome. 

    \item $Y$ has independent additive noise, i.e., $Y = b_T(\bW) +
        \varepsilon_Y$ with $\varepsilon_Y \indP T,\bW$.
  \end{itemize}
  If $\mathcal{Q}_P = \mathcal{R}_P$ holds, then \eqref{eq:COApointwise} also
  holds for any $P$-OMS description.
\end{prop}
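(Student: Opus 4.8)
The plan is to prove the three sufficient conditions by reducing the second and third to the first, and then to deduce the efficiency claim directly from Theorem~\ref{thm:COSefficiency}. Since $\mathcal{R}_P \subseteq \mathcal{Q}_P$ always holds by the convention \eqref{eq:btw}, the only content of the equality $\mathcal{Q}_P = \mathcal{R}_P$ is the reverse inclusion $\mathcal{Q}_P \subseteq \mathcal{R}_P$. As $\mathcal{Q}_P$ is generated by the family $\{F(y\given t,\bW;P)\}_{y\in\real,\,t\in\mathbb{T}}$, it suffices to show that each of these generators is $\mathcal{R}_P$-measurable. The first bullet asserts precisely that $F(y\given t,\bW;P)$ is $\sigma(b_t(\bW;P))$-measurable, and since $\sigma(b_t(\bW;P)) \subseteq \mathcal{R}_P$ for each $t$, the inclusion $\mathcal{Q}_P \subseteq \mathcal{R}_P$ follows at once. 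Thus the first bullet implies $\mathcal{Q}_P = \mathcal{R}_P$.

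For the reductions, I would observe that the binary and additive-noise cases are exactly the situations in which the conditional CDF is an explicit measurable function of the conditional mean, so that the first bullet applies. In the binary case $Y\in\{0,1\}$, the conditional law $F(\,\cdot\given t,\bW;P)$ is the two-atom distribution determined by $\mathbb{P}(Y=1\given T=t,\bW)=b_t(\bW;P)$; concretely $F(y\given t,\bW;P)$ equals $0$, $1-b_t(\bW;P)$, or $1$ according as $y<0$, $0\le y<1$, or $y\ge 1$, so each generator is a function of $b_t(\bW;P)$. In the additive-noise case, writing $F_\varepsilon$ for the law of $\varepsilon_Y$ and using $\varepsilon_Y \indP T,\bW$, one obtains $F(y\given t,\bW;P) = F_\varepsilon(y - b_t(\bW))$; since adding a constant does not change the generated $\sigma$-algebra we have $\sigma(b_t(\bW)) = \sigma(b_t(\bW;P))$, and as $F_\varepsilon(y-\cdot)$ is Borel for each fixed $y$, the generator is again $\sigma(b_t(\bW;P))$-measurable. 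Both cases therefore satisfy the first bullet and yield $\mathcal{Q}_P = \mathcal{R}_P$.

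For the final efficiency claim, the key point is that $\mathcal{Q}_P = \mathcal{R}_P$ collapses the distinction between $P$-OMS and $P$-ODS descriptions. Indeed, if $\cZ$ is $P$-OMS then $b_t(\bW;P) = b_t(\cZ;P)$ is $\ol{\cZ}^P$-measurable for every $t$, so $\mathcal{R}_P \subseteq \ol{\cZ}^P$; under the assumed equality this gives $\mathcal{Q}_P \subseteq \ol{\cZ}^P$, whence $\cZ$ is $P$-ODS by Theorem~\ref{thm:COSefficiency}(i). The identity \eqref{eq:COApointwise} is then exactly Theorem~\ref{thm:COSefficiency}(iii) applied to this $\cZ$.

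I expect the only real obstacle to be the measurability bookkeeping in the additive-noise reduction — specifically verifying that $F_\varepsilon(y-\cdot)$ composes measurably with $b_t(\bW;P)$ and that a possibly noncentered $\varepsilon_Y$ only shifts $b_t$ by a constant — rather than any conceptual difficulty. The efficiency statement itself is essentially immediate once the $P$-OMS-to-$P$-ODS collapse is recorded.
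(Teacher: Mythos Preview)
Your proposal is correct and follows essentially the same route as the paper: reduce the binary and additive-noise cases to the first bullet by exhibiting $F(y\given t,\bW;P)$ as a Borel function of $b_t(\bW;P)$, and for the efficiency claim use that $P$-OMS gives $\mathcal{R}_P\subseteq\ol{\cZ}^P$, hence $\mathcal{Q}_P\subseteq\ol{\cZ}^P$, and then invoke Theorem~\ref{thm:COSefficiency}. Your worry about a noncentered $\varepsilon_Y$ is moot, since the hypothesis $Y=b_T(\bW)+\varepsilon_Y$ with $\varepsilon_Y\indP T,\bW$ already forces $\ex_P[\varepsilon_Y]=0$ by definition of $b_t(\bW;P)$.
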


\begin{rmk}
  When $\mathcal{Q}_P = \mathcal{R}_P$ for all $P \in \mathcal{P}$ we have
  $\mathcal{Q} = \mathcal{R}$. There is thus the same information in the
  $\sigma$-algebra $\mathcal{R}$ generated by the conditional means of the
  outcome as there is in the $\sigma$-algebra $\mathcal{Q}$ generated by the
  entire conditional distribution of the outcome. The three conditions in
  Proposition~\ref{prop:COMSefficiency} are sufficient but not necessary to
  ensure this.
\end{rmk}


We also have a result analogous to Lemma 4 of \citet{rotnitzky2020efficient}:

\begin{lem}[Supplementation with precision]\label{lem:underadj} Fix
  $P\in\mathcal{P}$ and let $\cZ_1\subseteq \cZ_2$ be descriptions of $\bW$ such
  that $T\indP \cZ_2 \given \cZ_1$. Then $\cZ_1$ is $P$-valid if and only if
  $\cZ_2$ is $P$-valid. Irrespectively, it always holds that
  \begin{align*}
      \avar(\cZ_1 ; P)-\avar(\cZ_2 ; P)
      = 
      \mathbf{c}^\top \var_P[\mathbf{R}(\cZ_1,\cZ_2;P)] \mathbf{c} \ge 0
  \end{align*}
  where $\mathbf{R}(\cZ_1,\cZ_2;P) \coloneq (R_t(\cZ_1,\cZ_2;P))_{t\in \mathbb{T}}$ with
  \begin{align*}
      R_t(\cZ_1,\cZ_2;P) 
          &\coloneq \pa{\frac{\one(T=t)}{\pi_t(\cZ_2;P)} - 1}
          \pa{b_t(\cZ_2;P) - b_t(\cZ_1;P)}.
  \end{align*}
  Writing $R_t = R_t(\cZ_1,\cZ_2;P)$, the components of the covariance matrix of
  $\mathbf{R}$ are given by
  \begin{align*}
      \var_P(R_t) 
          &= \ex_P \left[
              \pa{\frac{1}{\pi_t(\cZ_1;P)} - 1}
                  \var_P[b_t(\cZ_2;P)\given \cZ_1]\right], \\
      \cov_P(R_s,R_t) &= 
          - \ex_P[\cov_P(b_s(\cZ_2;P), b_t(\cZ_2;P)\given \cZ_1)].
  \end{align*}
\end{lem}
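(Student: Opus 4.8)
The plan is to exploit the defining consequence of the conditional independence $T \indP \cZ_2 \given \cZ_1$. Since $\cZ_1 \subseteq \cZ_2$ we have $\cZ_1 \vee \cZ_2 = \cZ_2$, so that $\pi_t(\cZ_2; P) = \ex_P[\one(T=t)\given \cZ_2] = \ex_P[\one(T=t)\given \cZ_1] = \pi_t(\cZ_1; P)$ for every $t$; call this common value $\pi_t$. Writing $b_t(\cZ_2;P) = \ex_P[Y\one(T=t)\given\cZ_2]/\pi_t$ and pulling the $\cZ_1$-measurable factor $\pi_t^{-1}$ out of the conditional expectation, the tower property gives $\ex_P[b_t(\cZ_2;P)\given\cZ_1] = \ex_P[Y\one(T=t)\given\cZ_1]/\pi_t = b_t(\cZ_1;P)$. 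Taking expectations yields $\mu_t(\cZ_2;P) = \mu_t(\cZ_1;P)$ for all $t$, which is exactly the asserted validity equivalence. Since the propensity scores and the $\mu_t$'s agree across $\cZ_1$ and $\cZ_2$, a direct substitution into \eqref{eq:IF} collapses most terms and gives $\psi_t(\cZ_1;P) - \psi_t(\cZ_2;P) = R_t(\cZ_1,\cZ_2;P)$.

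Next I would set $S_i = \sum_t c_t \psi_t(\cZ_i;P)$, so that $\avar(\cZ_i;P) = \var_P(S_i)$ and $S_1 - S_2 = \mathbf{c}^\top \mathbf{R}(\cZ_1,\cZ_2;P)$. The identity to be proved then reads $\var_P(S_1) - \var_P(S_2) = \var_P(S_1 - S_2)$, which holds precisely when the cross term $\cov_P(S_2, S_1 - S_2)$ vanishes. This orthogonality is the crux of the argument and, I expect, the main obstacle, since it requires careful bookkeeping of products of treatment indicators. I would establish it by showing $\ex_P[\psi_s(\cZ_2;P)\, R_t] = 0$ for every pair $s,t$. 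Conditioning on $\cZ_2$, the part $b_s(\cZ_2;P) - \mu_s$ of $\psi_s$ is handled by $\ex_P[R_t \given \cZ_2] = 0$, which follows from $\ex_P[\one(T=t)\given\cZ_2] = \pi_t$. The remaining part of $\psi_s$ contributes a $\cZ_2$-measurable factor times $\one(T=s)(Y - b_s(\cZ_2;P))$ once the indicator product is collapsed via $\one(T=s)\one(T=t) = \one(s=t)\one(T=s)$, and its conditional expectation vanishes because $\ex_P[\one(T=s)(Y - b_s(\cZ_2;P))\given\cZ_2] = 0$. Hence the cross term is zero and $\avar(\cZ_1;P) - \avar(\cZ_2;P) = \mathbf{c}^\top \var_P[\mathbf{R}]\mathbf{c} \ge 0$.

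Finally, for the entrywise formulas I would use $\ex_P[R_t] = 0$, so that $\cov_P(R_s,R_t) = \ex_P[R_s R_t]$. Abbreviating $\delta_t = b_t(\cZ_2;P) - b_t(\cZ_1;P)$, expanding the product and conditioning on $\cZ_2$, the identities $\ex_P[\one(T=s)\one(T=t)\given\cZ_2] = \one(s=t)\pi_s$ and $\ex_P[\one(T=t)\given\cZ_2] = \pi_t$ give $\ex_P[R_s R_t\given\cZ_2] = \delta_s\delta_t(\one(s=t)\pi_t^{-1} - 1)$. Conditioning once more on $\cZ_1$ and invoking $\ex_P[b_t(\cZ_2;P)\given\cZ_1] = b_t(\cZ_1;P)$ from the first step, I would identify $\ex_P[\delta_s\delta_t\given\cZ_1]$ as the conditional covariance $\cov_P(b_s(\cZ_2;P), b_t(\cZ_2;P)\given\cZ_1)$ and $\ex_P[\delta_t^2\given\cZ_1]$ as $\var_P[b_t(\cZ_2;P)\given\cZ_1]$. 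Taking outer expectations and separating the diagonal $s=t$, where the bracket equals $\pi_t^{-1}-1$, from the off-diagonal, where it equals $-1$, produces exactly the stated expressions for $\var_P(R_t)$ and $\cov_P(R_s,R_t)$.
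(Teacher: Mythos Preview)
Your proposal is correct and follows essentially the same route as the paper: equate the propensities via $T\indP\cZ_2\given\cZ_1$, deduce $\psi_t(\cZ_1)-\psi_t(\cZ_2)=R_t$, establish orthogonality of $R_t$ and $\psi_s(\cZ_2)$, and then compute the entries of $\var_P(\mathbf{R})$ by first conditioning on $\cZ_2$ and then on $\cZ_1$. The only notable difference is that your orthogonality argument is more explicit: the paper simply asserts that $\mathbf{R}$ and $(\psi_t(\cZ_2))_{t\in\mathbb{T}}$ are uncorrelated because $\ex[\mathbf{R}\given\cZ_2]=0$, whereas you correctly recognize that $\psi_s(\cZ_2)$ is not $\cZ_2$-measurable and handle the residual part $\one(T=s)(Y-b_s(\cZ_2))/\pi_s$ separately via $\ex[\one(T=s)(Y-b_s(\cZ_2))\given\cZ_2]=0$.
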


As a consequence, we obtain the well-known fact that the propensity score is a
valid adjustment if $\bW$ is, cf. Theorems 1--3 in \citet{rosenbaum1983central}.
\begin{cor}
    Let $\Pi_P=\sigma(\pi_t(\bW;P)\colon \, t\in \mathbb{T})$. If $\cZ$ is a
    description of $\bW$ containing $\Pi_P$, then $\cZ$ is $P$-valid and 
    $$
        \avar(\Pi_P ; P)-\avar(\cZ ; P)
        = \mathbf{c}^\top \var_P[\mathbf{R}(\mathcal{R}_P ,\cZ;P)] \mathbf{c} 
        \ge 0.
    $$
\end{cor}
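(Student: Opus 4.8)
The plan is to derive both assertions from Lemma~\ref{lem:underadj} applied to the nested pair $\Pi_P\subseteq\cZ$, after checking its conditional-independence hypothesis through the balancing property of the propensity score. First I would record that $T\indP\bW\given\Pi_P$: since each $\pi_t(\bW;P)$ generates $\Pi_P$ and $\mathbb{P}(T=t\given\bW)=\pi_t(\bW;P)$, the tower property gives $\mathbb{P}(T=t\given\Pi_P)=\pi_t(\bW;P)=\mathbb{P}(T=t\given\bW)$, which is exactly the balancing property. The same computation along $\Pi_P\subseteq\cZ\subseteq\sigma(\bW)$ yields $\pi_t(\cZ;P)=\pi_t(\bW;P)$; restricting the balancing independence to the sub-$\sigma$-algebra $\cZ$ gives $T\indP\cZ\given\Pi_P$, while the identity $\mathbb{P}(T=t\given\cZ)=\pi_t(\bW;P)=\mathbb{P}(T=t\given\bW)$ gives $T\indP\sigma(\bW)\given\cZ$. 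These are precisely the hypotheses needed to apply Lemma~\ref{lem:underadj} to the pairs $(\Pi_P,\cZ)$, $(\cZ,\sigma(\bW))$ and $(\Pi_P,\sigma(\bW))$.

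For the validity claim I would use the equivalence part of Lemma~\ref{lem:underadj}. The description $\sigma(\bW)$ is $P$-valid by definition, so applying the lemma to $(\cZ,\sigma(\bW))$ shows that $\cZ$ is $P$-valid, and applying it to $(\Pi_P,\sigma(\bW))$ shows that $\Pi_P$ is $P$-valid. This recovers the classical fact that the propensity score is a valid adjustment whenever $\bW$ is.

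For the variance identity, the quantitative part of Lemma~\ref{lem:underadj} applied to $(\Pi_P,\cZ)$ delivers a gap of the asserted shape, in which the residual is built from the regression increment between $\Pi_P$ and $\cZ$. To re-anchor this residual in the stated form I would invoke that $\mathcal{R}_P$ is $P$-OMS by construction: each $b_t(\bW;P)$ generates $\mathcal{R}_P$, so $b_t(\mathcal{R}_P;P)=b_t(\bW;P)$. I would then combine this with the two projection identities $\ex_P[b_t(\bW;P)\given\cZ]=b_t(\cZ;P)$ and $\ex_P[b_t(\cZ;P)\given\Pi_P]=b_t(\Pi_P;P)$, both of which follow from $\pi_t(\cZ;P)=\pi_t(\bW;P)$ and the tower property, in order to rewrite the centered weight $\one(T=t)/\pi_t(\cZ;P)-1$ against the regression increments taken along $\Pi_P\subseteq\cZ\subseteq\sigma(\bW)$.

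The step I expect to be the main obstacle is precisely this reconciliation of the residual: the vector $\mathbf{R}(\mathcal{R}_P,\cZ;P)$ measures the increment relative to the true regression $b_t(\bW;P)=b_t(\mathcal{R}_P;P)$, whereas the increment emerging most directly from the lemma is measured relative to $b_t(\Pi_P;P)$. To control the difference I would condition the weighted cross-products successively on $\cZ$ and on $\Pi_P$, using that $\one(T=t)/\pi_t(\bW;P)-1$ has conditional mean zero given $\bW$ and conditional second moment $1/\pi_t(\bW;P)-1$, so that every contribution factors through conditional variances of the form $\var_P[b_t(\cdot;P)\given\Pi_P]$. Carefully tracking these reductions—and in particular verifying that the cross-covariances $\cov_P(R_s,R_t)$ assemble into the stated quadratic form $\mathbf{c}^\top\var_P[\mathbf{R}(\mathcal{R}_P,\cZ;P)]\mathbf{c}$—is the delicate bookkeeping on which the identity rests, and is where I would concentrate the effort.
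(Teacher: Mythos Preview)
Your overall plan is exactly the paper's intended argument: the corollary is stated immediately after Lemma~\ref{lem:underadj} as ``a consequence'', and the proof is simply to apply that lemma with $\cZ_1=\Pi_P$ and $\cZ_2=\cZ$, after checking $T\indP\cZ\given\Pi_P$ via the balancing property of the propensity score. Your verification of the balancing property and of $P$-validity (using $\pi_t(\cZ;P)=\pi_t(\bW;P)$, or equivalently the equivalence clause of the lemma applied to $\cZ\subseteq\sigma(\bW)$) is correct and complete.

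Where you go astray is in the ``main obstacle''. The displayed formula in the corollary contains a typographical slip: the residual should be $\mathbf{R}(\Pi_P,\cZ;P)$, not $\mathbf{R}(\mathcal{R}_P,\cZ;P)$. The direct application of Lemma~\ref{lem:underadj} produces $\mathbf{R}(\Pi_P,\cZ;P)$, and no further reconciliation is needed. Your attempt to match the printed $\mathcal{R}_P$ cannot succeed, because the identity with $\mathcal{R}_P$ is in fact false. To see this, take $\cZ=\sigma(\bW)$: since $\mathcal{R}_P$ is $P$-OMS you have $b_t(\mathcal{R}_P;P)=b_t(\bW;P)=b_t(\cZ;P)$, so every component of $\mathbf{R}(\mathcal{R}_P,\cZ;P)$ vanishes identically, giving $\mathbf{c}^\top\var_P[\mathbf{R}(\mathcal{R}_P,\cZ;P)]\mathbf{c}=0$; yet $\avar(\Pi_P;P)-\avar(\sigma(\bW);P)$ is strictly positive whenever $b_t(\bW;P)$ is not $\Pi_P$-measurable. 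Thus the bookkeeping you propose to ``concentrate effort'' on is chasing an identity that does not hold. Once you replace $\mathcal{R}_P$ by $\Pi_P$, the corollary is immediate from Lemma~\ref{lem:underadj} and your first two paragraphs already contain the full proof.
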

The corollary asserts that while the information contained in the propensity
score is valid, it is asymptotically inefficient to adjust for in contrast to
all the information in $\bW$. This is in the same spirit as Theorem 2 by
\citet{hahn1998role}, which states that the efficiency bound $\avar(\bW; P)$
remains unchanged if the propensity is considered known. However, the
corollary also quantifies the difference of the asymptotic variances.

Corollary \ref{cor:Qefficiency} asserts that $\mathcal{Q}$ minimizes asymptotic
variance over all $\mathcal{P}$-ODS descriptions $\mathcal{Z}$ satisfying
$\mathcal{Q} \subseteq \ol{\mathcal{Z}}^P$, and Proposition
\ref{prop:COMSefficiency} asserts that in special cases, $\mathcal{Q}$ reduces
to $\mathcal{R}$. Since $\mathcal{R}$ is $\mathcal{P}$-OMS, hence
$\mathcal{P}$-valid, it is natural to ask if $\mathcal{R}$ is generally more
efficient than $\mathcal{Q}$. The following example shows that their asymptotic
variances may be incomparable uniformly over~$\mathcal{P}$. 

\begin{exm}\label{ex:symmetric} 
  Let $0<\delta<\frac12$ be fixed and let $\mathcal{P}$ be the collection of 
  data generating distributions that satisfy:
  \begin{itemize}
    \item $\bW \in [\delta,1-\delta]$ with a symmetric distribution, i.e.,
    $\bW\stackrel{\mathcal{D}}{=}1-\bW$.
    \item $T\in\{0,1\}$ with $\mathbb{P}(T = 1 \given \bW) = \bW$.
    \item $Y = T + g(|\bW-\frac{1}{2}|) + v(\bW)\varepsilon_Y$, where
    $\varepsilon_Y\ind{} (T,\bW)$, $\ex[\varepsilon_Y^2]<\infty$,
    $\ex[\varepsilon_Y] = 0$, and where $g\colon [0,\frac{1}{2}-\delta]\to
    \real$ and $v\colon [\delta,1-\delta]\to [0,\infty)$ are continuous
    functions.
  \end{itemize}
  Letting $\bZ = |\bW-\frac{1}{2}|$, it is easy to verify directly from
  Definition \ref{def:OutcomeAlgebras} that 
  $$
      \mathcal{Q} = \sigma(\bW) \neq \sigma(\bZ) = \mathcal{R}.
  $$
  It follows that $\bZ$ is $\mathcal{P}$-OMS but not $\mathcal{P}$-ODS. However,
  $\bZ$ is $\mathcal{P}_1$-ODS in the homoscedastic submodel $\mathcal{P}_1=
  \{P\in \mathcal{P}\given v\equiv 1\}$. In fact, it generates the
  $\sigma$-algebra $\mathcal{Q}$ within this submodel, i.e., $\sigma(\bZ) =
  \vee_{P\in \mathcal{P}_1}\mathcal{Q}_P$. Thus $\avar(\bZ; P) \leq \avar(\bW ;
  P)$ for all $P\in\mathcal{P}_1$. 

  We refer to the supplementary Section \ref{sec:symmetriccomputations} for
  complete calculations of the subsequent formulas.
  
  From symmetry it follows that $\pi_1(\bZ)=0.5$ and hence we conclude that
  $T\ind \bZ$. By Lemma \ref{lem:underadj}, it follows that $0$ (the trivial
  adjustment) is $\mathcal{P}$-valid, but with $\avar(\bZ; P) \leq \avar(0 ; P)$
  for all $P\in\mathcal{P}$.
  
  Alternatively, direct computation yields that
  \begin{align*}
      \bV_t(0)
      &= 2\var(g(\bZ)) + 2\, \ex[v(\bW)^2]\ex[\varepsilon_Y^2] \\
      \bV_t(\bZ)
      &= \var\left( g(\bZ)\right)  + 2\,\ex[v(\bW)^2] \ex[\varepsilon_Y^2] \\
      \bV_t(\bW) 
      &= \var\left( g(\bZ)\right) + \ex\left[v(\bW)^2/\bW\right] \ex[\varepsilon_Y^2].
  \end{align*}
  With $\Delta = \mu_t$, the first two equalities confirm
  that $\avar(\bZ; P) \leq \avar(0 ; P)$, $P\in\mathcal{P}$, and the last two
  yield that indeed $\avar(\bZ; P) \leq \avar(\bW ; P)$ for $P\in\mathcal{P}_1$
  by applying Jensen's inequality. In fact, these are strict inequalities
  whenever $g(\bZ)$ and $\varepsilon_Y$ are non-degenerate. 
  
  Finally, we show that it is possible for $\avar(\bZ; P) > \avar(\bW ; P)$ for
  $P\notin\mathcal{P}_1$. Let $\tilde{P}\in \mathcal{P}$
  be a data generating distribution with
  $\ex_{\tilde{P}}[\varepsilon_Y^2]>0$, $v(\bW)=\bW^2$,
  and with $\bW$ uniformly distributed on $[\delta,1-\delta]$. Then
  \begin{align*}
      2\,\ex_{\tilde{P}}[v(\bW)^2] 
          &= \,2\ex_{\tilde{P}}[\bW^{4}] 
          = 2\frac{(1-\delta)^{5}-\delta^{5}}{5(1-2\delta)} 
          \xrightarrow{\delta\to 0} \frac{2}{5} \\
      \ex_{\tilde{P}}\left[\frac{v(\bW)^2}{\bW}\right] 
          &= \ex_{\tilde{P}}[\bW^{3}] 
          = \frac{(1-\delta)^{4}-\delta^{4}}{4(1-2\delta)} 
          \xrightarrow{\delta\to 0} \frac{1}{4}.
  \end{align*}
  So for sufficiently small $\delta$, it holds that $\avar(\bZ;
  \tilde{P}) > \avar(\bW ;
  \tilde{P})$. The example can also be modified to work
  for other $\delta>0$ by taking $v$ to be a sufficiently large power of $\bW$.
\end{exm}

\begin{rmk}
  Following Theorem 1 of \citet{benkeser2020nonparametric}, it is stated that
  the asymptotic variance $\avar(\mathcal{R}_P; P)$ is generally smaller than
  $\avar(\bW; P)$. The example demonstrates that this requires some assumptions
  on the outcome distribution, such as the conditions in
  Proposition~\ref{prop:COMSefficiency}.
\end{rmk}
\begin{rmk}\label{rmk:minimalsufficiency} Suppose that
  $\mathcal{P}=\{f_\theta\cdot \nu \colon \theta \in \Theta\}$
  is a parametrized family of measures with densities
  $\{f_\theta\}_{\theta\in\Theta}$ with respect to a $\sigma$-finite measure
  $\nu$. Then informally, a \emph{sufficient
  sub-$\sigma$-algebra} is any subset of the observed information for
  which the remaining information is independent of $\theta\in \Theta$, see
  \citet{billingsley2017probability} for a formal definition. Superficially,
  this concept seems similar to that of a $\mathcal{P}$-ODS description. In
  contrast however, the latter is a subset of the covariate information
  $\sigma(\bW)$ rather than the observed information $\sigma(T,\bW,Y)$, and it
  concerns sufficiency for the outcome distribution rather than the entire data
  distribution. Moreover, the Rao-Blackwell theorem asserts that conditioning an
  estimator of $\theta$ on a sufficient sub-$\sigma$-algebra leads to an
  estimator that is never worse. Example~\ref{ex:symmetric} demonstrates that
  the situation is more delicate when considering statistical efficiency for
  adjustment.
\end{rmk}

\section{Estimation based on outcome-adapted representations}\label{sec:estimation}
In this section we develop a general estimation method based on the insights of
Section~\ref{sec:informationbounds}. For
simplicity, we present the special case $\Delta = \mu_t$, but the results can be
extended to general contrasts $\Delta$. Throughout the section we will be
working under a fixed distribution $P\in \mathcal{P}$, and it is hence omitted
from the notation on several occasions.

Given a $P$-OMS representation $\bZ = \phi(\bW)$, it holds that $\mu_t(\bZ;P) =
\mu_t$ and an estimator of $\mu_t$ based on $\bZ$ can, under regularity conditions, achieve an
asymptotic variance of $\bV_t(\bZ;P)$. Proposition~\ref{prop:COMSefficiency}
gives conditions under which it holds that $\bV_t(\bZ;P)\leq \bV_t(\bW; P)$.
This suggests that we can construct a better estimator by using the $P$-OMS
representation $\bZ$ rather than the full covariate $\bW$.

In general, such a representation is, of course, unknown, and we therefore
consider adjusting for an estimated representation $\widehat{\bZ} =
\widehat{\phi}(\bW)$. To this end, we henceforth use the convention that each of the
quantities $\pi_t,b_t,\mu_t$, and $\bV_t$ are defined conditionally on the
estimated representation $\widehat{\phi}$, e.g.,
$$
    \mu_t(\widehat{\bZ})
    = \ex[b_t(\widehat{\bZ})\given \widehat{\phi} \,]
    = \ex[\ex[Y\given T=t,\widehat{\bZ},\widehat{\phi}\,]\given \widehat{\phi}\,].
$$
For any estimator $\widehat{\mu}_{t}$ based on the estimated representation, the
estimation error can then be decomposed as 
\begin{equation}\label{eq:biasvardecomp}
    \widehat{\mu}_{t} - \mu_t
    = \underbrace{\widehat{\mu}_{t} - \mu_t(\widehat{\bZ})}_{\text{sampling error}}
        + \underbrace{\mu_t(\widehat{\bZ}) - \mu_t}_{
            \text{representation error}}
\end{equation}
While the previous section was primarily concerned with the asymptotic variance
of the sampling error, the representation error may lead to an adjustment bias
due to adjusting for $\widehat{\bZ}$ rather than $\bZ$ or $\bW$. We need to
consider both error terms, and we suggest an estimation procedure that
accommodates a trade-off between sampling and representation error.

\subsection{The DOPE procedure}
Our method augments the AIPW estimator, which we briefly survey. In Section
\ref{sec:adjustment} we introduced the propensity score, $\pi_t$, and the
outcome regression, $b_t$, as random variables. We now consider their function
counterparts obtained as regular conditional expectations:
\begin{align*} 
    m&\colon \mathbb{T}\times \mathbb{W} \to \real, 
        &m(t\given \bw) \coloneq \mathbb{P}(T=t\given \bW=\bw), \\
    g&\colon \mathbb{T}\times \mathbb{W} \to \real,
        &g(t, \bw)  \coloneq  \ex[Y\given T=t,\bW=\bw].
\end{align*}
Given estimates $(\widehat{m},\widehat{g})$ of the nuisance functions
$(m,g)$, the AIPW estimator of $\mu_t$ is given by
\begin{equation}\label{eq:AIPW}
    \widehat{\mu}_t^{\textsc{aipw}}(\widehat{m},\widehat{g}) \coloneq
        \mathbb{P}_n\Big[
            \widehat{g}(t,\bW) + \frac{\one(T=t)(Y-\widehat{g}(t,\bW))}{\widehat{m}(t\given\bW)}
        \Big],
\end{equation}
where $\mathbb{P}_n[\cdot]$ denotes the empirical mean over $n$ i.i.d.
observations from $P$. The AIPW estimator can achieve $\sqrt{n}$-consistency
under regularity conditions, of which the most restrictive usually involves
controlling the product error of the nuisance estimates, e.g., 
\begin{equation}\label{eq:productbias}
    n\cdot \mathbb{P}_n\big[(\widehat{m}(t\given \bW)-m(t\given \bW))^2\big]
        \cdot \mathbb{P}_n \big[(\widehat{g}(t, \bW)-g(t, \bW))^2 \big]
        \longrightarrow 0.
\end{equation}
If $(\widehat{m},\widehat{g})$ are estimated using the same data as
$\mathbb{P}_n$, then additional conditions such as Donsker class conditions are
typically required. Among others, \citet{chernozhukov2018} propose circumventing
Donsker class conditions by sample splitting techniques such as $K$-fold
cross-fitting.

The starting point for our procedure is to consider an outcome model that
factors through an intermediate representation:
\begin{align} \label{eq:factorizedmodel}
    g(t,\bw) =h(t, \phi(\bw)), \qquad t\in \mathbb{T}, \bw\in\mathbb{W},
\end{align}
where $\phi \colon \mathbb{W} \to \real^d$ is a measurable mapping, and where $h
\colon \mathbb{T}\times \real^d \to \real$ is an unknown function. With
$\mathcal{F} \subseteq \{ \phi \mid \phi \colon \mathbb{W} \to \real^d\}$
denoting a representation model, our outcome model estimator in
Algorithm~\ref{alg:generalalg} is of the form $\widehat{h}(t, \widehat{\phi})$ with
$\widehat{\phi} \in \mathcal{F}$. As above, we use the notation $\bZ = \phi(\bW)$
and $\widehat{\bZ} = \widehat{\phi}(\bW)$ for $\phi, \widehat{\phi} \in \mathcal{F}$. If a
particular covariate value $\bw\in \mathbb{W}$ is clear from the context, we
also use the implicit notation $\bz = \phi(\bw)$ and $\widehat{\bz} =
\widehat{\phi}(\bw)$.

\begin{exm}[Single-index model]\label{ex:singleindex} The \textit{(partial)
  single-index model} applies to $\bW \in 
  \real^k$ and assumes that \eqref{eq:factorizedmodel} holds with
  $\phi(\bw)=\bw^\top\theta$, where $\theta \in \Theta \subseteq
  \real^k$. In other words, it assumes that the outcome regression factors
  through the \textit{linear predictor} $\bZ = \bW^\top
  \theta$, that is,
  \begin{align} \label{eq:singleindex}
  Y = h(T, \bW^\top \theta) + \varepsilon_Y, 
        \qquad \ex[\varepsilon_Y \given T, \bW] = 0.
  \end{align}
  For each treatment $T=t$, the model extends the generalized linear model (GLM)
  by assuming that the (inverse) link function $h(t, \cdot)$ is
  unknown. 
    
  The \textit{semiparametric least squares} (SLS) estimator proposed by
  \citet{ichimura1993semiparametric} estimates $\theta$
  and $h$ by performing a two-step regression procedure.
  Alternative estimation procedures are given in
  \citet{powell1989semiparametric,delecroix2003efficient}. The parameter
  $\theta$ is generally only identifiable up to a scalar constant but this has 
  no impact when adjusting for $\bZ$ as $\sigma(\bZ) = \sigma(c\bZ)$ for any
  $c\in\real\setminus\{0\}$. 
\end{exm}

Given an estimator $\widehat{\phi} \in \mathcal{F}$, independent of
$(T, \bW, Y)$, we introduce:
\begin{align*} 
    f_{\widehat{\phi}}(t\given \bz) 
        &\coloneq \mathbb{P}(T=t \given \widehat{\phi}(\bW)=\bz,\widehat{\phi}\,), \\
    h_{\widehat{\phi}}(t, \bz) &\coloneq \ex[Y\given T=t, \widehat{\phi}(\bW)=\bz,\widehat{\phi}\,].
\end{align*}
In words, $f_{\widehat{\phi}}$ and $h_{\widehat{\phi}}$ are the theoretical propensity
score and outcome regression, respectively, when using the
estimated representation $\widehat{\bZ}=
\widehat{\phi}(\bW)$.
Our generic estimation procedure is described in Algorithm \ref{alg:generalalg},
where the estimator $\widehat{\phi}$ is computed as part of an outcome regression, and where
the propensity score is estimated based on $\widehat{\phi}$. For theoretical reasons, 
$\widehat{\phi}$ and $\widehat{h}_{\widehat{\phi}}$ are computed in Algorithm~\ref{alg:generalalg} 
by a two-step procedure using potentially different subsets of the data -- see 
the discussion below the algorithm. We
refer to the resulting estimator, denoted by
$\widehat{\mu}_t^{\textsc{DOPE}}$, as the \textit{Debiased Outcome-adapted
Propensity Estimator} (DOPE).

\begin{algorithm} \caption{Debiased Outcome-adapted Propensity Estimator} \label{alg:generalalg}
  \textbf{input}: observations $(T_i,\bW_i,Y_i)_{i\in[n]}$,
  index sets $\mathcal{I}_1,\mathcal{I}_2,\mathcal{I}_3 \subseteq [n]$\;
  \textbf{options}:
  representation model $\mathcal{F}$, and outcome and propensity score regression methods\;
  \Begin{
    regress outcomes $(Y_i)_{i\in \mathcal{I}_1}$ onto
    $(T_i,\bW_{i})_{i\in\mathcal{I}_1}$ to obtain $\widehat{\phi} \in \mathcal{F}$\;

    compute the estimated representations $\widehat{\bZ}_i = \widehat{\phi}(\bW_i)$ for 
    $i \in \mathcal{I}_2$;
  
    regress outcomes $(Y_i)_{i\in \mathcal{I}_2}$ onto
    $(T_i,\widehat{\bZ}_{i})_{i\in\mathcal{I}_2}$ to obtain
    $\widehat{h}_{\widehat{\phi}}$\;
    
    regress treatments $(T_i)_{i\in\mathcal{I}_2}$ onto
    $(\widehat{\bZ}_{i})_{i\in \mathcal{I}_2}$ to obtain
    $\widehat{f}_{\widehat{\phi}}$\;

    compute the estimated representations $\widehat{\bZ}_i = \widehat{\phi}(\bW_i)$ for 
    $i \in \mathcal{I}_3$;
    
    compute AIPW based on $(T_i,\widehat{\bZ}_i,Y_i)_{i\in\mathcal{I}_3}$ and
    nuisance estimates $(\widehat{f}_{\widehat{\phi}},
    \widehat{h}_{\widehat{\phi}})$:
    \begin{align*}
    \widehat{\mu}_t^{\textsc{DOPE}}
    =
        \frac{1}{|\mathcal{I}_3|}\sum_{i\in\mathcal{I}_3}\Big(
            \widehat{h}_{\widehat{\phi}}(t,\widehat{\bZ}_i) + 
            \frac{\one(T_i=t)(Y_i-\widehat{h}_{\widehat{\phi}}(t,\widehat{\bZ}_i))}{\widehat{f}_{\widehat{\phi}}(t\given\widehat{\bZ}_i)}
        \Big);
    \end{align*}
  }
  \Return{\textnormal{DOPE:} $\widehat{\mu}_t^{\textsc{DOPE}}$.}
\end{algorithm}

Algorithm \ref{alg:generalalg} is formulated such that $\mathcal{I}_1$,
$\mathcal{I}_2$, and $\mathcal{I}_3$ can be arbitrary subsets of $[n]$, and for
the asymptotic theory we assume that they are disjoint. However, in practical
applications it might be reasonable to take $\mathcal{I}_1 = \mathcal{I}_2$ or
to use the full sample for every estimation step, i.e., employing the algorithm
with $\mathcal{I}_1=\mathcal{I}_2=\mathcal{I}_3=[n]$. In those cases, we also
imagine that lines 4--6 are run simultaneously, cf. the SLS estimator in the
single-index model (Example \ref{ex:singleindex}). The supplementary
Section~\ref{sup:crossfitting} describes a full cross-fitting scheme based on
Algorithm~\ref{alg:generalalg}.

By defining 
\begin{align*}
    m_{\widehat{\phi}}(t, \bw) & = f_{\widehat{\phi}}(t \given \widehat{\phi}(\bw)), \\
    g_{\widehat{\phi}}(t, \bw) & = h_{\widehat{\phi}}(t, \widehat{\phi}(\bw))
\end{align*}
with corresponding estimators 
\begin{align*}
    \widehat{m}_{\widehat{\phi}}(t, \bw) & = \widehat{f}_{\widehat{\phi}}(t \given \widehat{\phi}(\bw)), \\
    \widehat{g}_{\widehat{\phi}}(t, \bw) & = \widehat{h}_{\widehat{\phi}}(t, \widehat{\phi}(\bw)),
\end{align*}
we can also write DOPE as 
\begin{align}
    \label{eq:DOPE}
   \widehat{\mu}_t^{\textsc{DOPE}}
    =
        \frac{1}{|\mathcal{I}_3|}\sum_{i\in\mathcal{I}_3}\Big(
            \widehat{g}_{\widehat{\phi}}(t,\bW_i) + 
            \frac{\one(T_i=t)(Y_i-\widehat{g}_{\widehat{\phi}}(t,\bW_i))}{\widehat{m}_{\widehat{\phi}}(t\given\bW_i)}
        \Big).
\end{align}
The representation of DOPE by \eqref{eq:DOPE} shows that it is a variant of the
general AIPW estimator given by \eqref{eq:AIPW} with $\widehat{m} =
\widehat{m}_{\widehat{\phi}}$ and $\widehat{g} = \widehat{g}_{\widehat{\phi}}$.
Thus DOPE forces both nuisance estimators to factor through the same
outcome-adapted representation. While Algorithm~\ref{alg:generalalg} most
accurately describes how DOPE is computed, \eqref{eq:DOPE} is more convenient
for stating and proving our theoretical results. 

\begin{rmk}\label{rmk:benkeser} 
\citet{benkeser2020nonparametric} use a similar idea as DOPE, but
their propensity factors through the final outcome regression function instead
of a general intermediate representation. That our general formulation of
Algorithm~\ref{alg:generalalg} contains their collaborative one-step estimator
as a special case is seen as follows. Suppose $T\in\{0,1\}$ is binary and
let the representation be given by
$$
    (z_1, z_2)  = \phi(\bw) = (g(0,\bw),g(1,\bw)).
$$ 
Then with $h(t, (z_1, z_2)) = (1 - t)z_1 + t z_2$ and any estimator
$\widehat{\phi}$ we have $\widehat{g}_{\widehat{\phi}}(t,\bw) = h(t,
\widehat{\phi}(\bw))$. In this case and with
$\mathcal{I}_1=\mathcal{I}_2=\mathcal{I}_3$, Algorithm~\ref{alg:generalalg}
yields the collaborative one-step estimator of \citet[App.
D]{benkeser2020nonparametric}. Accordingly, we refer to this special case as
DOPE-BCL (Benkeser, Cai and van der Laan).
\end{rmk}

\subsection{Asymptotics of DOPE}
We proceed to discuss the asymptotics of DOPE. For our theoretical analysis, we assume that the index sets in Algorithm \ref{alg:generalalg} are disjoint. That is, the theoretical analysis relies on sample splitting.
\begin{asm}\label{asm:samples} The observations $(T_i,\bW_i,Y_i)_{i\in[n]}$ used
    to compute $\widehat{\mu}_t^{\textsc{DOPE}}$ are i.i.d. with the same
    distribution as $(T,\bW,Y)$, and
    $[n]=\mathcal{I}_1\cup\mathcal{I}_2\cup\mathcal{I}_3$ is a partition such
    that $|\mathcal{I}_3| \to \infty$ as $n\to \infty$.
\end{asm}

In our simulations, employing sample splitting did not seem to enhance
performance, and hence we regard this as a theoretical convenience rather than a
practical necessity in all cases. Our results can likely also be established
under alternative assumptions that avoid sample splitting, in particular Donsker
class conditions.

\subsubsection{Estimation error conditionally on representation}
We now state some results on the asymptotic behavior of the DOPE estimator
using the estimated representation $\widehat{\bZ}$. We require the
following assumption.

\begin{asm}\label{asm:AIPWconv}
    For $(T,\bW,Y)\sim P$ satisfying the representation model~\eqref{eq:factorizedmodel}, it holds that:
    \begin{enumerate}
        \item[(i)] There exists $c>0$ such that
        $\max\{|\widehat{m}_{\widehat{\phi}}-\frac{1}{2}|,\sup_{\phi \in \mathcal{F}} |m_{\phi}-\frac{1}{2}|\}\leq
        \frac{1}{2}-c$ almost surely.
        \label{asm:strictoverlap}
        
        \item[(ii)] There exists $C>0$ such that $\ex[Y^2\given \bW, T] \leq C$.
        \label{asm:CVarbound}

        \item[(iii)] There exists $\delta>0$ such that
        $\ex\big[|Y|^{2+\delta}\big]<\infty$.
        \label{asm:varlowerbound}
        
        \item[(iv)] It holds that \(
                \mathcal{E}_{1,t}^{(n)} \coloneq \frac{1}{|\mathcal{I}_3|} \sum_{i\in \mathcal{I}_3} (\widehat{m}_{\widehat{\phi}}(t\given \bW_i) - m_{\widehat{\phi}}(t\given \bW_i))^2 \xrightarrow{P} 0
            \).
        \label{asm:propensityconsistency}
        
        \item[(v)] It holds that \(
                \mathcal{E}_{2,t}^{(n)} \coloneq\frac{1}{|\mathcal{I}_3|} \sum_{i\in \mathcal{I}_3} (\widehat{g}_{\widehat{\phi}}(t,\bW_i) - g_{\widehat{\phi}}(t,\bW_i))^2 \xrightarrow{P} 0
            \).
        \label{asm:ORconsistency}
        
        \item[(vi)] It holds that $|\mathcal{I}_3|\cdot\mathcal{E}_{1,t}^{(n)} \mathcal{E}_{2,t}^{(n)} 
                \xrightarrow{P} 0$.
        \label{asm:producterror}
    \end{enumerate}    
\end{asm}

Classical convergence results of the AIPW estimator are proven under
similar conditions, but with (vi) replaced by the stronger convergence in
\eqref{eq:productbias}. Condition (i) is a strict overlap condition for all
possible estimated representations and will certainly be satisfied if strict
overlap holds when conditioning on $\bW$. It is likely that one would only need
the strict overlap condition for representations selected with high probability
but we stick to the stricter condition here for simplicity. We establish
conditional asymptotic results under conditions on the conditional errors
$\mathcal{E}_{1,t}^{(n)}$ and $\mathcal{E}_{2,t}^{(n)}$. To the best of our
knowledge, the most similar results that we are aware of are those of
\citet{benkeser2020nonparametric}, and our proof techniques are most similar to
those of \citet{chernozhukov2018,lundborg2023perturbation}.
The following result states that DOPE is asymptotically Gaussian 
regardless of the estimated representation.

\begin{thm}[Asymptotics of DOPE] \label{thm:conditionalConvMain}
  Under Assumptions \ref{asm:samples} and \ref{asm:AIPWconv}, it holds that
  \begin{align*}
    \sqrt{|\mathcal{I}_3|} \cdot \bV_t(\widehat{\bZ})^{-1/2}\left(
   \widehat{\mu}_t^{\textsc{DOPE}} - \mu_t(\widehat{\bZ})
    \right) \xrightarrow{d} \mathrm{N}(0, 1).
  \end{align*}
    Furthermore, defining
    \begin{align}\label{eq:varestimator}
      \widehat{\mathcal{V}}_t 
      = \frac{1}{|\mathcal{I}_3|} \sum_{i\in \mathcal{I}_3} \widehat{u}_i^2
      - \Big(\frac{1}{|\mathcal{I}_3|} \sum_{i\in \mathcal{I}_3}
      \widehat{u}_{i}\Big)^2
    \end{align}
    where
    \begin{align*}
      \widehat{u}_i 
      = \widehat{g}_{\widehat{\phi}}(t,\bW_i) + \frac{\one(T=t)(Y_i-\widehat{g}_{\widehat{\phi}}(t,\bW_i))}{\widehat{m}_{\widehat{\phi}}(t\given \bW_i)},
    \end{align*}
    it holds that
    \begin{align*}
      \widehat{\mathcal{V}}_t 
      -\mathbb{V}_t(\widehat{\bZ})
      \xrightarrow{P} 0
    \end{align*}
    as $n\to \infty$.
\end{thm}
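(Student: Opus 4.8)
The plan is to condition on the nuisance estimates and reduce to a cross-fitted AIPW analysis for the \emph{representation-conditional} estimand $\mu_t(\widehat{\bZ})$. Write $\mathcal{G}_n \coloneq \sigma\big((T_i,\bW_i,Y_i)_{i\in\mathcal{I}_1\cup\mathcal{I}_2}\big)$, so that $\widehat{\phi}$, $\widehat{m}_{\widehat{\phi}}$ and $\widehat{g}_{\widehat{\phi}}$ are $\mathcal{G}_n$-measurable and, by Assumption~\ref{asm:samples}, the sample $(T_i,\bW_i,Y_i)_{i\in\mathcal{I}_3}$ is i.i.d.\ and independent of $\mathcal{G}_n$. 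Let $\mathbb{P}_{\mathcal{I}_3}[\cdot] \coloneq |\mathcal{I}_3|^{-1}\sum_{i\in\mathcal{I}_3}[\cdot]$ and introduce the \emph{oracle} summand $u_i^{\ast} \coloneq g_{\widehat{\phi}}(t,\bW_i) + \one(T_i=t)\big(Y_i - g_{\widehat{\phi}}(t,\bW_i)\big)/m_{\widehat{\phi}}(t\given\bW_i)$. Conditioning within each $\widehat{\bZ}$-stratum and using $\ex[Y\given T=t,\widehat{\bZ},\widehat{\phi}]=g_{\widehat{\phi}}(t,\bW)$ and $\mathbb{P}(T=t\given\widehat{\bZ},\widehat{\phi})=m_{\widehat{\phi}}(t\given\bW)$, one checks that $\ex[u_i^{\ast}\given\mathcal{G}_n]=\mu_t(\widehat{\bZ})$ and $\var(u_i^{\ast}\given\mathcal{G}_n)=\bV_t(\widehat{\bZ})$. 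I would then decompose
\[
\widehat{\mu}_t^{\textsc{DOPE}} - \mu_t(\widehat{\bZ}) = S_n + R_n, \qquad S_n \coloneq \mathbb{P}_{\mathcal{I}_3}\big[u_i^{\ast} - \mu_t(\widehat{\bZ})\big], \quad R_n \coloneq \mathbb{P}_{\mathcal{I}_3}\big[\widehat{u}_i - u_i^{\ast}\big],
\]
and treat the linear term $S_n$ by a conditional central limit theorem and the remainder $R_n$ by a bias/variance split.

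For $R_n$, the natural conditioning is on $\mathcal{H}_n \coloneq \sigma\big(\mathcal{G}_n, (\bW_i)_{i\in\mathcal{I}_3}\big)$, leaving only $(T_i,Y_i)$ random. Expanding $\widehat{u}_i - u_i^{\ast}$ into the two standard AIPW summands $r_i^{(1)} \coloneq \big(\one(T_i=t)/m_{\widehat{\phi}}-1\big)(g_{\widehat{\phi}}-\widehat{g}_{\widehat{\phi}})$ and $r_i^{(2)} \coloneq \one(T_i=t)(Y_i-\widehat{g}_{\widehat{\phi}})\big(\widehat{m}_{\widehat{\phi}}^{-1}-m_{\widehat{\phi}}^{-1}\big)$, the same stratum computation gives $\ex[r_i^{(1)}\given\mathcal{H}_n]=0$ and $\ex[r_i^{(2)}\given\mathcal{H}_n]=(m_{\widehat{\phi}}-\widehat{m}_{\widehat{\phi}})(g_{\widehat{\phi}}-\widehat{g}_{\widehat{\phi}})/\widehat{m}_{\widehat{\phi}}$, all evaluated at $\bW_i$. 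Hence the conditional bias $\ex[R_n\given\mathcal{H}_n]$ is an \emph{empirical} average of the product error, and Cauchy--Schwarz together with the overlap bound in Assumption~\ref{asm:AIPWconv}(i) yields $\sqrt{|\mathcal{I}_3|}\,|\ex[R_n\given\mathcal{H}_n]| \le c^{-1}\sqrt{|\mathcal{I}_3|\,\mathcal{E}_{1,t}^{(n)}\mathcal{E}_{2,t}^{(n)}} \xrightarrow{P} 0$ by (vi). For the centered part $R_n-\ex[R_n\given\mathcal{H}_n]$, the summands are conditionally independent given $\mathcal{H}_n$, and bounding $\ex[(r_i^{(1)}+r_i^{(2)})^2\given\mathcal{H}_n]$ via overlap (i) and the bound $\ex[Y^2\given\bW,T]\le C$ from (ii) gives $\var\big(\sqrt{|\mathcal{I}_3|}(R_n-\ex[R_n\given\mathcal{H}_n])\given\mathcal{H}_n\big) \lesssim \mathcal{E}_{1,t}^{(n)}+\mathcal{E}_{2,t}^{(n)} \xrightarrow{P}0$ by (iv)--(v). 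Conditional Chebyshev then makes the conditional exceedance probabilities vanish in probability, and since they are bounded by $1$, bounded convergence gives $\sqrt{|\mathcal{I}_3|}\,R_n \xrightarrow{P}0$ unconditionally.

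For the linear term I would apply, conditionally on $\mathcal{G}_n$, the Lyapunov central limit theorem to the i.i.d.\ mean-zero array $\{u_i^{\ast}-\mu_t(\widehat{\bZ})\}_{i\in\mathcal{I}_3}$ with conditional variance $\bV_t(\widehat{\bZ})$. The Lyapunov ratio is of order $|\mathcal{I}_3|^{-\delta/2}\,\ex[|u_i^{\ast}-\mu_t(\widehat{\bZ})|^{2+\delta}\given\mathcal{G}_n]\,\bV_t(\widehat{\bZ})^{-(2+\delta)/2}$; the conditional $(2+\delta)$-moment is controlled by overlap (i) and the moment condition (iii), so the ratio vanishes provided $\bV_t(\widehat{\bZ})$ is bounded away from $0$, which holds under a mild nondegeneracy of the conditional outcome variance that is implicit in the normalization $\bV_t(\widehat{\bZ})^{-1/2}$, in view of the representation $\bV_t(\widehat{\bZ}) = \ex\big[\var(Y\given T=t,\widehat{\bZ},\widehat{\phi})\,\pi_t(\widehat{\bZ})^{-1}\,\big|\,\widehat{\phi}\big] + \var\big(b_t(\widehat{\bZ})\given\widehat{\phi}\big)$. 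To pass from conditional to unconditional convergence I would show the conditional characteristic function $\ex\big[\exp\big(is\sqrt{|\mathcal{I}_3|}\bV_t(\widehat{\bZ})^{-1/2}S_n\big)\given\mathcal{G}_n\big]$ converges in probability to $e^{-s^2/2}$ and invoke bounded convergence; combined with $\sqrt{|\mathcal{I}_3|}\bV_t(\widehat{\bZ})^{-1/2}R_n\xrightarrow{P}0$ from the previous step and Slutsky's lemma, this yields the stated asymptotic normality. I expect this random normalization to be the main obstacle: the Lyapunov ratio must be controlled uniformly over the data-dependent representations $\widehat{\phi}$, which is precisely where the uniform overlap in (i), the moment condition (iii), and the lower bound on $\bV_t(\widehat{\bZ})$ enter.

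Finally, for the variance estimator I would write $\widehat{\mathcal{V}}_t = \mathbb{P}_{\mathcal{I}_3}[\widehat{u}_i^2] - \big(\mathbb{P}_{\mathcal{I}_3}[\widehat{u}_i]\big)^2$ and compare with $\bV_t(\widehat{\bZ}) = \ex[(u_i^{\ast})^2\given\mathcal{G}_n]-\mu_t(\widehat{\bZ})^2$. Splitting $\mathbb{P}_{\mathcal{I}_3}[\widehat{u}_i^2] = \mathbb{P}_{\mathcal{I}_3}[(u_i^{\ast})^2] + \mathbb{P}_{\mathcal{I}_3}[(\widehat{u}_i-u_i^{\ast})(\widehat{u}_i+u_i^{\ast})]$, a conditional weak law (valid because (iii) supplies a conditional moment of order $1+\delta/2$ for $(u_i^{\ast})^2$) gives $\mathbb{P}_{\mathcal{I}_3}[(u_i^{\ast})^2] \xrightarrow{P} \ex[(u_i^{\ast})^2\given\mathcal{G}_n]$, while Cauchy--Schwarz bounds the cross term by $\big(\mathbb{P}_{\mathcal{I}_3}[(\widehat{u}_i-u_i^{\ast})^2]\big)^{1/2}\big(\mathbb{P}_{\mathcal{I}_3}[(\widehat{u}_i+u_i^{\ast})^2]\big)^{1/2}$, where $\mathbb{P}_{\mathcal{I}_3}[(\widehat{u}_i-u_i^{\ast})^2]\lesssim\mathcal{E}_{1,t}^{(n)}+\mathcal{E}_{2,t}^{(n)}\xrightarrow{P}0$ by the same error control as for $R_n$ and the second factor is $O_P(1)$. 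Since $\widehat{\mu}_t^{\textsc{DOPE}}-\mu_t(\widehat{\bZ})\xrightarrow{P}0$ by the first part and $\mu_t(\widehat{\bZ})$ is bounded, the squared-mean term satisfies $\big(\mathbb{P}_{\mathcal{I}_3}[\widehat{u}_i]\big)^2 - \mu_t(\widehat{\bZ})^2\xrightarrow{P}0$, and combining the pieces gives $\widehat{\mathcal{V}}_t - \bV_t(\widehat{\bZ})\xrightarrow{P}0$.
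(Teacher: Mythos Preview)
Your overall architecture matches the paper's: split into an oracle linear term and a remainder, control the remainder by a conditional mean/variance argument, and apply a conditional CLT to the oracle term. The two-term remainder $r_i^{(1)}+r_i^{(2)}$ is algebraically equivalent to the paper's three-term split $r_i^1+r_i^2+r_i^3$ (your $r_i^{(2)}$ bundles the paper's $r_i^2$ and the product-error term $r_i^3$), and your route from conditional to unconditional CLT via characteristic functions is essentially the same as the paper's uniform Lindeberg--Feller argument over $\phi\in\mathcal{F}$.

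There is, however, a genuine gap in the remainder analysis: your conditioning $\sigma$-algebra $\mathcal{H}_n=\sigma\big(\mathcal{G}_n,(\bW_i)_{i\in\mathcal{I}_3}\big)$ is too fine. Conditioning on the full $\bW_i$ gives $\ex[\one(T_i=t)\mid\mathcal{H}_n]=m(t\mid\bW_i)$ and $\ex[Y_i\mid T_i=t,\mathcal{H}_n]=g(t,\bW_i)$, i.e., the \emph{full}-covariate propensity and regression, not the representation-based $m_{\widehat{\phi}}(t\mid\bW_i)=\mathbb{P}(T=t\mid\widehat{\bZ},\widehat{\phi})$ and $g_{\widehat{\phi}}(t,\bW_i)=\ex[Y\mid T=t,\widehat{\bZ},\widehat{\phi}]$. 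Consequently your claimed identities $\ex[r_i^{(1)}\mid\mathcal{H}_n]=0$ and $\ex[r_i^{(2)}\mid\mathcal{H}_n]=(m_{\widehat{\phi}}-\widehat{m}_{\widehat{\phi}})(g_{\widehat{\phi}}-\widehat{g}_{\widehat{\phi}})/\widehat{m}_{\widehat{\phi}}$ are false as stated; the left-hand sides involve $m(t\mid\bW_i)$ and $g(t,\bW_i)$, for which no consistency is assumed.

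The fix is precisely what the paper does: condition on $(\widehat{\bZ}_i)_{i\in\mathcal{I}_3}$ and $\widehat{\phi}$ rather than on $(\bW_i)_{i\in\mathcal{I}_3}$. Since $\widehat{g}_{\widehat{\phi}}$, $g_{\widehat{\phi}}$, $\widehat{m}_{\widehat{\phi}}$, $m_{\widehat{\phi}}$ all factor through $\widehat{\phi}$, they remain measurable under this coarser conditioning, and now $\ex[\one(T_i=t)\mid\widehat{\bZ}_i,\widehat{\phi}]=m_{\widehat{\phi}}(t\mid\bW_i)$ and $\ex[Y_i\mid T_i=t,\widehat{\bZ}_i,\widehat{\phi}]=g_{\widehat{\phi}}(t,\bW_i)$ by definition. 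With this change, your computations for both the conditional bias and the conditional variance of $R_n$ go through exactly as you wrote them (the paper additionally conditions on $(T_i)_{i\in\mathcal{I}_3}$ when handling the analogue of your $r_i^{(2)}$, which isolates $\var(Y\mid\widehat{\bZ},T)$ and makes the use of Assumption~\ref{asm:AIPWconv}(ii) cleanest). The variance-consistency argument is likewise correct once the same coarser conditioning is used.
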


In other words, we can expect DOPE to have an asymptotic distribution, for the
estimated representation $\widehat{\phi}$, approximated by 
$$
  \widehat{\mu}_t^{\textsc{DOPE}} \given \widehat{\phi}
  \quad \stackrel{as.}{\sim} \quad 
  \mathrm{N}\Big(\mu_t(\widehat{\bZ}), \, 
  \frac{1}{|\mathcal{I}_3|}\mathbb{V}_t(\widehat{\bZ})\Big),
$$
and we can estimate $\mathbb{V}_t(\widehat{\bZ})$ by $\widehat{\mathcal{V}}_t$.
Note that if $|\mathcal{I}_3| = \lfloor n/3 \rfloor$, say, then the asymptotic
variance is
$\frac{3}{n}\mathbb{V}_t(\widehat{\bZ})$. Our
simulation study indicates that the asymptotic approximation may be valid in
some cases without the use of sample splitting, in which case
$|\mathcal{I}_3|=n$ and the asymptotic variance is
$\frac{1}{n}\mathbb{V}_t(\widehat{\bZ})$. A
direct implementation of the sample splitting procedure thus comes with an
efficiency cost. In the supplementary Section~\ref{sup:crossfitting} we discuss
how a cross-fitting procedure makes use of sample splitting
without an efficiency cost.

\subsubsection{Asymptotics of representation induced error}
We now turn to the discussion of the second term in \eqref{eq:biasvardecomp},
i.e., the difference between the adjusted mean for the estimated representation
$\widehat{\bZ}$ and the adjusted mean for the full covariate $\bW$. If the
representation belongs to a parametric class of functions and we use this for
estimation, under sufficient regularity, the delta method \citep[Thm.
3.8]{van2000asymptotic} describes the distribution of this error:

\begin{prop}\label{prop:deltamethod} Let $\mathcal{F} =
\{\phi_{\theta}\}_{\theta \in \Theta}$ with $\Theta \subseteq \mathbb{R}^p$.
Suppose there exists $\theta_P \in \Theta$ such that $(Y, \bW, Y) \sim P$
satisfies \eqref{eq:factorizedmodel} with $\phi \equiv \phi_{\theta_P} \in
\mathcal{F}$. Let $u\colon \Theta \to \real$ be the function given by $u(\theta)
= \mu_t(\phi_\theta(\bW);P)$ and assume that $u$ is differentiable in
$\theta_P$. Suppose that $\widehat{\theta}$ is an estimator of $\theta_P$ with rate
$r_n$ such that
    \begin{align*}
        r_n \cdot (\widehat{\theta} -\theta_P) 
        \xrightarrow{d} \mathrm{N}(0,\Sigma).
    \end{align*}
    If $\widehat{\phi} := \phi_{\widehat{\theta}}$, then
    \begin{align*}
        r_n \cdot (\mu_t(\widehat{\bZ}) - \mu_t)
        \xrightarrow{d} \mathrm{N}(0, \nabla u(\theta_P)^\top \Sigma \nabla u(\theta_P))
    \end{align*}
    as $n \to \infty$.
\end{prop}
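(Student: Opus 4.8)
The plan is to reduce the statement to a direct application of the delta method \citep[Thm. 3.8]{van2000asymptotic}, after performing two bookkeeping identifications that connect the abstract quantity $\mu_t(\widehat{\bZ})$ to the deterministic map $u$.

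First I would establish that $u(\theta_P) = \mu_t$. Because $(T,\bW,Y)\sim P$ satisfies the factorized model \eqref{eq:factorizedmodel} with $\phi\equiv\phi_{\theta_P}$, the outcome regression factors as $b_t(\bW;P) = g(t,\bW) = h(t,\phi_{\theta_P}(\bW))$, which is $\sigma(\phi_{\theta_P}(\bW))$-measurable. Writing $\cZ = \sigma(\phi_{\theta_P}(\bW))$ and using the convention of Remark~\ref{rmk:conditional}, one pulls the $\cZ$-measurable factor $g(t,\bW)$ out of the conditional expectation to get $b_t(\cZ;P) = \ex[Y\one(T=t)\given\cZ]/\mathbb{P}(T=t\given\cZ) = g(t,\bW) = b_t(\bW;P)$, so $\cZ$ is $P$-OMS. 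By the hierarchy in Definition~\ref{def:adjustment}, $P$-OMS implies $P$-valid, whence $\mu_t(\phi_{\theta_P}(\bW);P) = \mu_t(\bW;P) = \mu_t$; in terms of $u$ this is exactly $u(\theta_P) = \mu_t$.

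Second I would note that, since $\widehat{\phi}=\phi_{\widehat{\theta}}$ is independent of the copy $(T,\bW,Y)\sim P$ over which $\mu_t(\widehat{\bZ})$ is evaluated, conditioning on $\widehat{\phi}$ is the standard ``plug in an independent nuisance'' situation: for each fixed value $\theta$ the conditioning convention of Section~\ref{sec:estimation} gives $\ex[b_t(\phi_\theta(\bW))] = \mu_t(\phi_\theta(\bW);P) = u(\theta)$, and substituting the random but independent $\widehat{\theta}$ yields $\mu_t(\widehat{\bZ}) = u(\widehat{\theta})$ as random variables. Combining the two identities, the target of interest is $r_n\big(u(\widehat{\theta}) - u(\theta_P)\big)$. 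Since $u$ is differentiable at $\theta_P$ by hypothesis, $r_n\to\infty$, and $r_n(\widehat{\theta}-\theta_P)\xrightarrow{d}\mathrm{N}(0,\Sigma)$, the delta method gives $r_n\big(u(\widehat{\theta})-u(\theta_P)\big)\xrightarrow{d}\nabla u(\theta_P)^\top\mathrm{N}(0,\Sigma)$, i.e.\ the univariate Gaussian $\mathrm{N}\big(0,\nabla u(\theta_P)^\top\Sigma\,\nabla u(\theta_P)\big)$, as claimed.

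There is no serious analytic obstacle: the proof is essentially an invocation of a standard theorem, with differentiability of $u$ taken as an assumption. The only steps carrying genuine content are the identification $u(\theta_P)=\mu_t$, which hinges on the factorized model making the true representation a $P$-OMS (hence $P$-valid) description, and the careful use of the independence between $\widehat{\phi}$ and the evaluation data so that $\mu_t(\widehat{\bZ})$ really is $u$ evaluated at $\widehat{\theta}$ rather than an object that entangles estimation and evaluation randomness; the latter is guaranteed by the sample-splitting setup of Assumption~\ref{asm:samples} but is worth stating explicitly.
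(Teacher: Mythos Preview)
Your proposal is correct and matches the paper's own treatment: the paper does not give a proof beyond stating that the result follows from the delta method \citep[Thm.~3.8]{van2000asymptotic}, and you have simply supplied the two bookkeeping identifications ($u(\theta_P)=\mu_t$ via $P$-OMS/validity, and $\mu_t(\widehat{\bZ})=u(\widehat{\theta})$ via the conditional-on-$\widehat{\phi}$ convention) that make the delta method directly applicable. One small remark: the independence you invoke between $\widehat{\phi}$ and the template observation $(T,\bW,Y)$ is part of the paper's standing convention for population quantities rather than a consequence of Assumption~\ref{asm:samples}, so you could cite the conditional definition of $\mu_t(\widehat{\bZ})$ at the start of Section~\ref{sec:estimation} instead.
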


The delta method requires that the adjusted mean,
$\mu_t(\phi_\theta(\bW);P)$,
is differentiable with respect to $\theta\in\Theta$. The theorem below showcases
that this is the case for the single-index model in
Example~\ref{ex:singleindex}.
\begin{thm}\label{thm:SIregularity} 
  
  Let $(T,\bW,Y)\sim P$ be given by the single-index model in
  Example~\ref{ex:singleindex} with $h_t(\cdot)\coloneq h(t,\cdot)\in
  C^1(\mathbb{R})$. Assume that $\bW$ has a distribution with density $p_{\bW}$
  with respect to Lebesgue measure on $\real^d$ and that $p_{\bW}$ is continuous
  almost everywhere with bounded support. Assume also that the propensity
  $m(t\mid \bw) = \mathbb{P}(T=t\given \bW=\bw)$ is continuous in $\bw$.
    
  Then $u\colon \real^d \to \real$, defined by $u(\theta) =
  \mu_t(\bW^\top \theta; P)$, is differentiable at
  $\theta=\theta_P$ with
  \begin{align*}
  \nabla u (\theta_P)=
  \ex_P\Big[
    h_t'(\bW^\top \theta_P)
  \Big(
  1
  -
  \frac{\mathbb{P}(T=t\given \bW)}{\mathbb{P}(T=t\given \bW^\top \theta_P)}
  \Big)\bW\Big].
  \end{align*}
\end{thm}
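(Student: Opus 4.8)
The plan is to differentiate an inverse-propensity-weighted representation of $u$ and to use a Neyman-orthogonality structure so the hardest parts of the derivative drop out. First I would write, directly from the definition of $\mu_t(\bZ;P)$,
$$u(\theta)=\mu_t(\bW^\top\theta;P)=\ex\!\left[\frac{\one(T=t)\,Y}{\pi_\theta(\bW^\top\theta)}\right],\qquad \pi_\theta(\bW^\top\theta)\coloneq\mathbb{P}(T=t\given \bW^\top\theta),$$
so that $u(\theta_P)=\mu_t$, and then pass to the equivalent doubly robust form
$$u(\theta)=\ex\!\left[b_t(\bW^\top\theta)+\frac{\one(T=t)}{\pi_\theta(\bW^\top\theta)}\big(Y-b_t(\bW^\top\theta)\big)\right],$$
in which the IPW residual term is \emph{identically} zero in $\theta$ because $\ex[\one(T=t)(Y-b_t(\bW^\top\theta))\given \bW^\top\theta]=0$. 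Keeping this redundant representation is the whole point: it renders the functional orthogonal in its nuisances, which is what annihilates the awkward terms below.

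Next I would differentiate under the expectation at $\theta=\theta_P$. The derivative splits into an outcome-regression part carrying $\nabla_\theta b_t(\bW^\top\theta)$ and a propensity part carrying $\nabla_\theta\pi_\theta(\bW^\top\theta)$. The propensity part vanishes at $\theta_P$: under the single-index model $\ex[Y\given T=t,\bW]=h_t(\bW^\top\theta_P)$ is already $\sigma(\bW^\top\theta_P)$-measurable, hence $b_t(\bW^\top\theta_P)=\ex[Y\given T=t,\bW^\top\theta_P]=h_t(\bW^\top\theta_P)$, so the residual $Y-b_t(\bW^\top\theta_P)$ has conditional mean zero given $\bW$ and the propensity term integrates to zero. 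Conditioning the surviving term on $\bW$ then replaces $\one(T=t)$ by $m(t\given\bW)$ and yields $\nabla u(\theta_P)=\ex[\nabla_\theta b_t(\bW^\top\theta)|_{\theta_P}\,(1-m(t\given\bW)/\pi_{\theta_P}(\bW^\top\theta_P))]$. Writing $b_t(\bW^\top\theta)=B(\bW^\top\theta,\theta)$ with $B(s,\theta)\coloneq \ex[Y\given T=t,\bW^\top\theta=s]$, I would decompose the total derivative into the argument part $\partial_sB(s,\theta_P)|_{s=\bW^\top\theta_P}\bW$ and a conditioning-direction part $\nabla_\theta B(\bW^\top\theta_P,\theta)|_{\theta_P}$. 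Since $B(s,\theta_P)=h_t(s)$ the argument part is exactly $h_t'(\bW^\top\theta_P)\bW$, while the direction part is $\sigma(\bW^\top\theta_P)$-measurable. A one-line computation (condition on $\bW^\top\theta_P$ and use $\ex[m(t\given\bW)\given\bW^\top\theta_P]=\pi_{\theta_P}(\bW^\top\theta_P)$) shows that any $\sigma(\bW^\top\theta_P)$-measurable factor $G$ obeys $\ex[G\,(1-m(t\given\bW)/\pi_{\theta_P}(\bW^\top\theta_P))]=0$, so the direction part drops out and leaves precisely the claimed gradient.

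The main obstacle is the analytic content hidden in the phrase ``differentiate under the expectation'': I must show that $\theta\mapsto b_t(\bW^\top\theta)$ and $\theta\mapsto\pi_\theta(\bW^\top\theta)$ are differentiable near $\theta_P$ with dominated difference quotients. These are conditional expectations whose conditioning $\sigma$-algebra $\sigma(\bW^\top\theta)$ itself moves with $\theta$, so their regularity is not automatic. I would handle this by expressing $\pi_\theta$ and $B$ as ratios of integrals over the hyperplanes $\{\bw^\top\theta=s\}$ via disintegration (the coarea formula) and differentiating those integrals in $\theta$. This is exactly where the hypotheses enter: the density $p_{\bW}$ being continuous almost everywhere with bounded support rules out boundary contributions and supplies the dominating functions, continuity of $m(t\given\cdot)$ together with positivity keeps $\pi_{\theta_P}$ bounded away from zero, and $h_t\in C^1$ makes $B(s,\theta_P)=h_t(s)$ continuously differentiable in $s$. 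Establishing this differentiability and the accompanying domination bounds for the level-set integrals is the technical heart of the proof; once it is in place, the orthogonality algebra of the preceding paragraph delivers the formula immediately.
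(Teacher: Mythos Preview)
Your orthogonality scheme is correct at the formal level and is a genuinely different route from the paper's. The paper does not differentiate the nuisances at all: it writes $b_t(\bW^\top\theta)=\ex[h_t(\bW^\top\theta_P)\mid T=t,\bW^\top\theta]$, Taylor-expands $h_t$ about $\bW^\top\theta_P$ so that $h_t(\bW^\top\theta)-h_t(\bW^\top\theta_P)=R(\theta,\theta_P)$ already carries an explicit factor $(\theta-\theta_P)$, and obtains
\[
u(\theta)-u(\theta_P)=\ex\!\Big[R(\theta,\theta_P)\Big(1-\tfrac{\one(T=t)}{\pi_\theta(\bW^\top\theta)}\Big)\Big]=C(\theta,\theta_P)^\top(\theta-\theta_P).
\]
Differentiability at $\theta_P$ then reduces to showing that $C(\theta,\theta_P)$ is \emph{continuous} at $\theta_P$, for which only almost sure continuity of $\theta\mapsto\pi_\theta(\bW^\top\theta)$ is needed; that follows by an orthogonal change of coordinates and dominated convergence. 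Your plan and the paper's arrive at the same hyperplane/disintegration representation of $\pi_\theta$, but what you must extract from it is strictly stronger.

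The gap in your approach is precisely there. To ``differentiate under the expectation'' you need, for almost every $\omega$, that $\theta\mapsto\pi_\theta(\bW(\omega)^\top\theta)$ and $\theta\mapsto B(\bW(\omega)^\top\theta,\theta)$ are differentiable near $\theta_P$ with dominated difference quotients. The theorem only assumes $p_{\bW}$ is continuous almost everywhere with bounded support and $m(t\mid\cdot)$ is continuous. That is enough for continuity of the level-set integrals in $\theta$ (dominated convergence) but is generally not enough for differentiability: if $p_{\bW}$ has a jump across a hyperplane, the disintegrated integrals over $\{\bw:\bw^\top\theta=s\}$ can have kinks in $\theta$. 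So the coarea step you flag as ``the technical heart'' cannot be carried out under the stated hypotheses without adding smoothness. By contrast, the paper's Taylor-expansion trick offloads all the $\theta$-differentiation onto $h_t\in C^1$ and reduces the analytic burden on $\pi_\theta$ from differentiability to mere continuity. If you strengthen the assumptions to, say, $p_{\bW}\in C^1$ with compact support, your argument goes through cleanly and is arguably more conceptual; under the hypotheses as written, the paper's approach is the one that works.
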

The theorem is stated with some restrictive assumptions that simplify the proof,
but these are likely not necessary. It should also be possible to extend this
result to more general models of the form \eqref{eq:factorizedmodel}, but we
leave such generalizations for future work. In fact, the proof technique of
Theorem~\ref{thm:SIregularity} has already found application in
\citet{gnecco2023boosted} in a different context.

The convergences implied in Theorem \ref{thm:conditionalConvMain} and
Proposition \ref{prop:deltamethod}, with $r_n=\sqrt{|\mathcal{I}_3|}$, suggest
that the MSE of DOPE is of order
\begin{align}\label{eq:asymptoticMSE}
    \ex[(\widehat{\mu}_t^{\textsc{DOPE}} - \mu_t)^2]
    &= \ex[(\widehat{\mu}_t^{\textsc{DOPE}} - \mu_t(\bZ_{\widehat{\theta}}))^2]
        + \ex[(\mu_t(\bZ_{\widehat{\theta}}) - \mu_t)^2] \nonumber \\
        &\qquad + 2\,\ex[(\widehat{\mu}_t^{\textsc{DOPE}} - \mu_t(\bZ_{\widehat{\theta}}))(\mu_t(\bZ_{\widehat{\theta}}) - \mu_t)] \nonumber \\
    &\approx \frac{1}{|\mathcal{I}_3|}\Big(\ex[\bV_t(\bZ_{\widehat{\theta}})] + 
        \nabla u(\theta_P)^\top \Sigma \nabla u(\theta_P)
        \Big)
        + o\left(|\mathcal{I}_3|^{-1}\right)
\end{align}
The informal approximation `$\approx$' can be turned into an equality by
establishing (or simply assuming) uniform integrability, which enables the
distributional convergences to be lifted to convergences of moments.

\begin{rmk}\label{rmk:confidenceinterval} 
  The expression in \eqref{eq:asymptoticMSE} suggests an approximate confidence
  interval of the form
  \begin{equation*}
      \widehat{\mu}_t^{\textsc{DOPE}} \pm \frac{z_{1-\alpha}}{\sqrt{|\mathcal{I}_3|}} (\widehat{\mathcal{V}}_t + \widehat{\nabla}^\top \widehat{\Sigma} \widehat{\nabla}),
  \end{equation*}
    where, $\widehat{\mathcal{V}}_t$ is as in \eqref{eq:varestimator}.
    $\widehat{\Sigma}$ is a consistent estimator of the asymptotic variance of
    $\sqrt{|\mathcal{I}_3|}(\widehat{\theta}-\theta_P)$, and where
    $\widehat{\nabla}$ is a consistent estimator of $\nabla_\theta
    \mu_t(\phi(\theta, \bW))\vert_{\theta=\theta_P}$.
    However, the requirement of constructing both $\widehat{\Sigma}$ and
    $\widehat{\nabla}$ adds further to the complexity of the methodology, so it
    might be preferable to rely on bootstrapping techniques in practice. In the
    simulation study we return to the question of inference and examine the
    coverage of the simpler interval that does not include the term
    $\widehat{\nabla}^\top \widehat{\Sigma} \widehat{\nabla}$, as well 
    as an interval based on bootstrap estimation of standard errors.
\end{rmk}

\section{Experiments}\label{sec:experiments} 
In this section, we showcase the performance of DOPE on simulated
and real data. All code is publicly available on
\href{https://github.com/ARLundborg/DOPE}{GitHub}\footnote{
\url{https://github.com/ARLundborg/DOPE}}.

\subsection{Simulation study}\label{sec:simulation} We present results of a
simulation study based on the single-index model from
Example~\ref{ex:singleindex}. We demonstrate the performance of DOPE, and compare with
various alternative estimators.

\subsubsection{Sampling scheme}
We simulated datasets consisting of $n$ i.i.d. copies of $(T,\bW,Y)$ sampled
according to the following scheme with $d=12$:
\begin{align}\label{eq:samplescheme}
    \bW = (W^1,\ldots,W^d)
        &\sim \mathrm{Unif}\bigl([0,1]^d\bigr) \nonumber \\
    T \given \bW
        &\sim \mathrm{Bern}( 0.01 + 0.98 \cdot \one(W^1 > 0.5)) \nonumber\\
    Y \given T,\bW, \beta_Y
        &\sim \mathrm{N}(h(T, \bW^\top \beta_Y ),1)
\end{align}
where $\beta_Y$ is sampled once for each dataset with
\[
    \beta_Y = (1,\tilde{\beta}_Y ), \qquad \tilde{\beta}_Y \sim \mathrm{N}\Bigl(0, \frac{1}{d-1}\mathbf{I}_{d-1}\Bigr),
\]
and where $n$, and $h$ are experimental parameters. The settings considered
were $n\in \{300,900,2700\}$ and with $h$ being one of
\begin{align}\label{eq:simulationlinks}
    \begin{array}{ll}
       h_{\text{lin}}(t,z) = t+3z,   & \quad
            h_{\text{square}} (t,z) = z^{1+t}, \\
       h_{\text{cbrt}} (t,z) = (2+t)z^{1/3},  & \quad 
            h_{\text{sin}} (t,z) = (3+t)\sin(\pi z).
    \end{array}
\end{align}
For each setting, $N=900$ datasets were simulated. We only present the results
for the linear and cube root link functions and relegate the results for the
other link functions to Section~\ref{sec:extra_simulations} in the supplementary
material. We experimented with simulations for other values of $d$ as well ($4$
and $36$) and found that the results were broadly similar to those reported here
for $d=12$. 

Note that while $\ex[T] = 0.01 + 0.98 \cdot \mathbb{P}(W^1 > 0.5) = 0.5$, the
propensity score $m(t\mid \bw) = 0.01 + 0.98 \cdot \one(w^1 > 0.5)$ takes the
rather extreme values $\{0.01,0.99\}$. Even though it technically satisfies
(strict) positivity, these extreme values of the propensity makes the adjustment
for $\bW$ a challenging task. For each dataset, the adjusted mean
$\mu_1$ (conditional on $\beta_Y$) was considered as the
target parameter, and the ground truth was numerically computed as the sample
mean of $10^7$ observations of $h(1,\bW^\top \beta_Y)$.

\subsubsection{Simulation estimators}\label{sec:simestimators} 
This section contains an overview of the estimators used in the simulation. For
a complete description see Section \ref{sup:simdetails} in the supplementary
material.

Two settings were considered for outcome regression (OR):
\begin{itemize}
    \item \textbf{Linear}: Ordinary Least Squares (OLS). 

    \item \textbf{Neural network}: A feedforward neural network with two hidden
    layers: a linear layer with one neuron, followed by a fully connected
    ReLU-layer with 100 neurons. The first layer is a linear bottleneck that
    enforces the single-index model, and we denote the weights by $\theta\in
    \real^d$. An illustration of the architecture can be found in the
    supplementary Section~\ref{sup:simdetails}. For a further discussion of
    leaning single-and multiple-index models with neural networks, see
    \citet{parkinson2023linear} and references therein.        
\end{itemize}
For propensity score estimation, logistic regression was used across all
settings. A ReLU-network with one hidden layer with 100 neurons was also
considered for estimation of the propensity score, but it did not enhance the
performance of the resulting ATE estimators in this setting. Random forests and
other methods were also initially used for both outcome regression and
propensity score estimation. They were subsequently excluded because they did
not seem to yield any noteworthy insights beyond what was observed for the
methods above.

For each outcome regression, two implementations were explored: a stratified
regression where $Y$ is regressed onto $\bW$ separately for each stratum
$T=0$ and $T=1$, and a joint regression where $Y$
is regressed onto $(T,\bW)$ simultaneously. In the case of joint regression, the
neural network architecture represents the regression function as a single index
model, given by $Y = h(\alpha T + \theta^\top \bW) + \varepsilon_Y$. This
representation differs from the description of the regression function specified
in the sample scheme~\eqref{eq:samplescheme}, which allows for a more complex
interaction between treatment and the index of the covariates. In this sense,
the joint regression is misspecified. We include both implementations
merely to highlight that both approaches are used in practice, but we
do not advocate for the stringent use of one or the other.

Based on these methods for nuisance estimation, we considered the following
estimators of the adjusted mean: 
\begin{itemize}
    \item The regression estimator 
    \(
      \widehat{\mu}_1^{\mathrm{reg}}
        \coloneq \mathbb{P}_n[\widehat{g}(1,\bW)] 
    \).
    \item The AIPW estimator given in Equation \eqref{eq:AIPW}.
    
    \item DOPE from Algorithm~\ref{alg:generalalg}, 
    where $\bZ_{\widehat{\theta}} = \bW^\top\widehat{\theta}$ and where    
    $\widehat{\theta}$ contains the weights of the first layer of the neural network
    designed for single-index regression. We refer to this estimator as
    DOPE-IDX.

    \item DOPE-BCL described in Remark~\ref{rmk:benkeser}, where the propensity
    score is based on the final outcome regression. See also \citet[App.
    D]{benkeser2020nonparametric}. Note that for the linear outcome regression 
    model DOPE-BCL coincides with DOPE-IDX.
\end{itemize}
We considered two versions of each DOPE estimator: one without sample splitting
and another using 3-fold cross-fitting.
For the latter, the final empirical mean is calculated based on the hold-out
fold, while the nuisance parameters are fitted using the remaining three folds.
For each fold $k=1, 2, 3$, this means employing
Algorithm~\ref{alg:generalalg} with $\mathcal{I}_1=\mathcal{I}_2 = [n]\setminus
J_k$ and $\mathcal{I}_3 = J_k$. The two versions showed comparable performance
for larger samples. In this section we only present the results for 
DOPE without sample splitting, which performed better overall in our
simulations. However, a comparison with the cross-fitted version can be found in
Section \ref{sup:simdetails} in the supplement.

Numerical results for the IPW estimator \( \widehat{\mu}_1^{\mathrm{IPW}}
\coloneq \mathbb{P}_n[\one(T=t)Y / \widehat{m}(1\given \bW)] \) were also
gathered. It performed poorly in our settings, and we have omitted the results
for the sake of clarity in the presentation.

\subsubsection{Empirical performance of estimators}
\begin{figure}
    \centering
    \includegraphics[width=\linewidth]{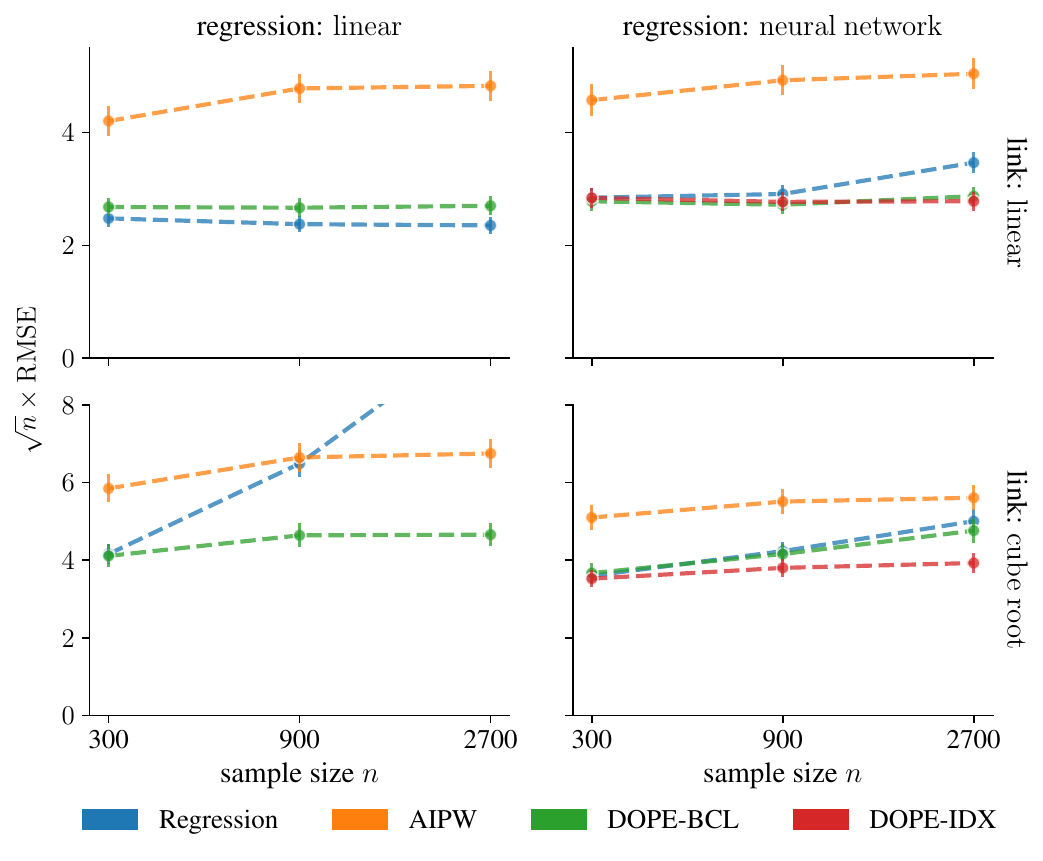}
    \caption{Root mean squared errors (RMSE) for various estimators of $\mu_1$
    plotted against sample size. Each data point is an average over 900
    datasets. The bars around each point correspond to asymptotic $95\%$
    confidence intervals based on the CLT. The dashed lines are only included as
    visual aids to make it easier to spot trends across sample sizes. For this
    plot, the outcome regression was fitted separately for each stratum
    $T=0$ and $T=1$.}
    \label{fig:RMSE-nocf}
\end{figure}

The results for stratified outcome regression are shown in Figure
\ref{fig:RMSE-nocf}. The rows of the plot correspond to different link functions
in \eqref{eq:simulationlinks} while each column denotes whether the outcome
regression is based on a linear regression or a single-index neural network. The
plot displays $\sqrt{n}$ times the root mean squared error as a function of the
sample size $n$. In the setting of the first row, where the link is linear, the
regression estimator $\widehat{\mu}_1^{\mathrm{reg}}$ with OLS performs best as
expected. The remaining estimators exhibit similar performance, except for AIPW,
which consistently performs poorly across all settings. This can be attributed
to extreme values of the propensity score, resulting in a large asymptotic
variance for AIPW. All estimators seem to maintain an approximate
$\sqrt{n}$-consistent RMSE for the linear link as anticipated.

For the cube root link in the second row (and the additional nonlinear links
included in the supplement), we observe that the OLS-based regression estimators
perform poorly and do not exhibit approximate $\sqrt{n}$-consistency.  For
neural network outcome regression, AIPW still performs the worst across all
nonlinear links. The regression estimator and DOPE estimators share similar
performance when used with the neural network, with DOPE-IDX being more accurate
overall. Since the neural network architecture is tailored to the single-index
model, it is not surprising that the outcome regression works well, and as a
result there is less need for debiasing. On the other hand, the debiasing
introduced in DOPE does not hurt the accuracy, and in fact, improves it in this
setting.

\begin{figure}
    \centering
    \includegraphics[width=\linewidth]{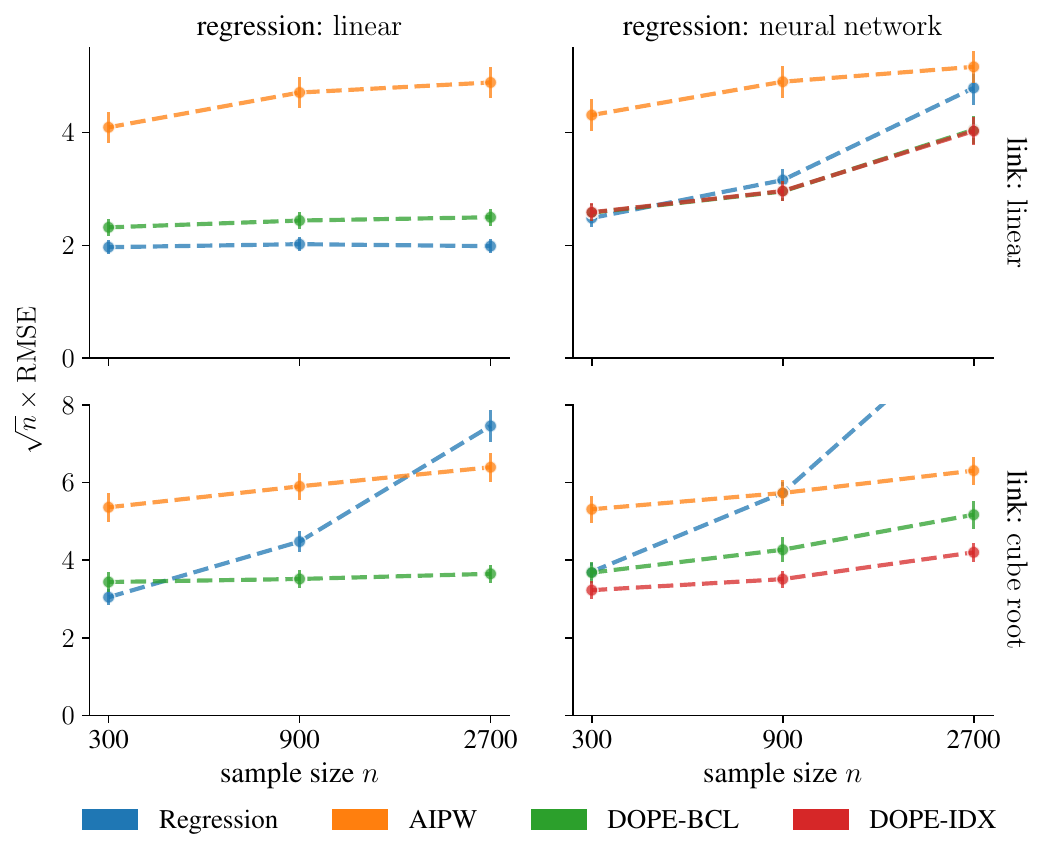}
    \caption{Root mean squared errors (RMSE) for various estimators of $\mu_1$
    plotted against sample size. Each data point is an average over 900
    datasets. The bars around each point correspond to asymptotic $95\%$
    confidence intervals based on the CLT. The dashed lines are only included as
    visual aids to make it easier to spot trends across sample sizes. For this
    plot, the outcome regression was fitted jointly onto $(T,\bW)$.}
    \label{fig:RMSE-joint}
\end{figure}

The results for joint regression of $Y$ on $(T,\bW)$ are shown in Figure
\ref{fig:RMSE-joint}. The results for the OLS-based estimators provide similar
insights as previously discussed, so we focus on the results for the neural
network based estimators. The jointly trained neural network is, in a sense,
misspecified for the single-index model (except for the linear link), as
discussed in Section~\ref{sec:simestimators}. Thus it is not surprising that
the regression estimator fails to adjust effectively for larger sample sizes.
What is somewhat unexpected, however, is that the precision of DOPE, especially
DOPE-IDX, does not appear to be compromised by the misspecified
outcome regression. We suspect that this could be attributed
to the joint regression producing more robust predictions for the rare treated
subjects with $W^1\leq 0.5$, for which $m(1\given \bW)=0.01$. The predictions
are more robust since the joint regression can leverage some of the information
from the many untreated subjects with $W^1\leq 0.5$ at the cost of introducing
systematic bias, which DOPE-IDX deals with in the debiasing step.
While this phenomenon is interesting, a thorough exploration of its exact
details, both numerically and theoretically, is a task we believe is better
suited for future research.

In summary, DOPE serves as a middle ground between the regression
estimator and the AIPW. It provides an additional safeguard against biased
outcome regression, all while avoiding the potential numerical instability
entailed by using standard inverse propensity weights.

\subsubsection{Inference}\label{sec:simulationinference} 

We now consider the question of inference on the adjusted mean using the
various estimators. We conduct a separate simulation study in the setting of the
previous section with the cube root link function, $n$ fixed at $2700$ and we
only consider joint estimation of the regression functions. In addition, we fix 
$\beta = (1, -2, 3, 0, \dots, 0) \in \mathbb{R}^{12}$ and repeat the experiment
$N=100$ times. We restrict attention to the estimators based on the neural
network.

We investigate confidence intervals for each of the methods discussed above of
the form $\widehat{\mu}_1 \pm z_{0.975} \mathrm{SE}$ where $z_{0.975}$ is the
$0.975$ quantile of the standard normal distribution and $\mathrm{SE}$ is an
estimate of the standard error of the estimator. One way to estimate the
standard error is by using the empirical asymptotic variance estimator, i.e.,
$\mathrm{SE} = \widehat{\mathcal{V}}_1^{1/2} n^{-1/2}$ where
$\widehat{\mathcal{V}}_1$ is defined in \eqref{eq:varestimator}. An alternative
approach is to use the bootstrap, where we resample the data with replacement
$200$ times and estimate $\mathrm{SE}$ as the empirical standard deviation of
the bootstrap estimates.

\begin{figure}
    \centering
    \includegraphics[width=\linewidth]{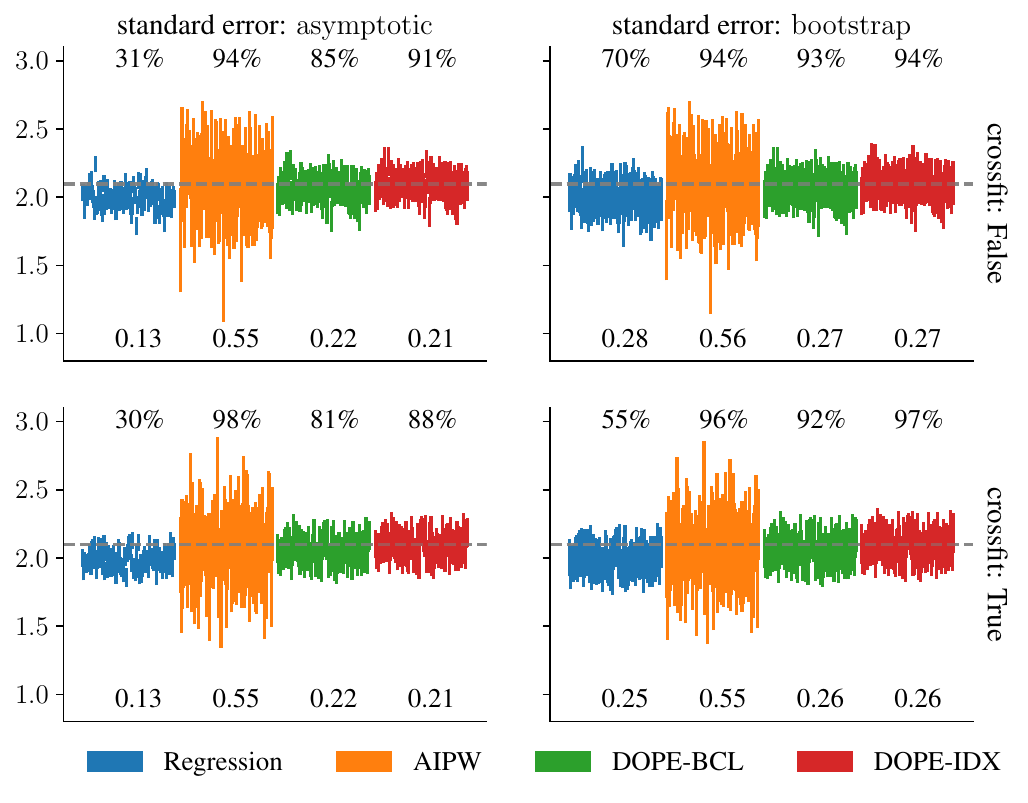}
    \caption{ $95\%$ nominal confidence intervals for the adjusted mean
    $\mu_1$ in the experiment described in
    Section~\ref{sec:simulationinference}. The text above each collection of
    intervals indicates the coverage rate out of the $100$ intervals. The text
    below indicates the median length of the intervals.} 
    \label{fig:Coverage-cf}
\end{figure}

The results from this experiment are shown in
Figure~\ref{fig:Coverage-cf}. We see that, except for the AIPW estimator, the
standard errors based on asymptotics yield anti-conservative confidence
intervals that do not achieve the nominal $95\%$ coverage. However, when using 
bootstrap standard errors, the coverage is close to the nominal level for all 
estimators. In every case, the AIPW intervals are much wider than the regression
and DOPE intervals. We also note that cross-fitting does in fact seem to reduce
the length of the intervals and when comparing mean squared
errors, as in the previous section, we do also see that cross-fitting improves
estimation in this setting. 

The results demonstrate an interesting statistical/computational
trade-off. We could use the computationally expensive bootstrap to obtain
precise coverage using any method, but we would pay a price in terms of width if
we use the AIPW intervals over the DOPE intervals. On the other hand, if we use
the intervals based on the asymptotic variance estimator, the DOPE intervals
would be slightly anti-conservative, but they would be much shorter than the
corresponding AIPW intervals. It is not surprising that the DOPE intervals based
on asymptotics do not achieve the nominal level as the asymptotic theory only
provides coverage for the adjusted mean given the \emph{estimated}
representation rather than the true representation. As mentioned in the theory
section, we could correct the DOPE intervals using the Delta method, but we see
the bootstrap as a more convenient way to achieve valid intervals.

\subsection{Application to NHANES data}
\label{sec:nhanes}
We consider the mortality dataset collected by the National Health and Nutrition
Examination Survey I Epidemiologic Followup Study \citep{cox1997plan},
henceforth referred to as the NHANES dataset. The dataset was initially
collected as in \citet{lundberg2020local}\footnote{ See
\url{https://github.com/suinleelab/treeexplainer-study} for their GitHub
repository. }. The dataset contains several baseline covariates, and our outcome
variable is the indicator of death at the end of study. To deal with missing
values, we considered both a complete mean imputation as in
\citet{lundberg2020local}, or a trimmed dataset where covariates with more than
$50\%$ of their values missing are dropped and the rest are mean imputed. The
latter approach reduces the number of covariates from $79$ to $65$. The final
results were similar for the two imputation methods, so we only report the
results for the mean imputed dataset here. In the supplementary
Section~\ref{sup:nhanesdetails} we show the results for the trimmed dataset.

The primary aim of this section is to evaluate the different estimation
methodologies on a realistic and real data example. For this purpose we consider
a treatment variable based on \textit{pulse pressure} and study its effect on
mortality. Pulse pressure is defined as the difference between systolic and
diastolic blood pressure (BP). High pulse pressure is not only used as an
indicator of disease but is also reported to increase the risk of cardiovascular
diseases \citep{franklin1999pulse}. We investigate the added effect of high
pulse pressure when adjusting for other baseline covariates, in particular
systolic BP and levels of white blood cells, hemoglobin, hematocrit, and
platelets. We do not adjust for diastolic BP, as it determines the pulse
pressure when combined with systolic BP, which would therefore lead to a
violation of positivity. 

\begin{figure}
    \centering
    \includegraphics[width=0.75\linewidth]{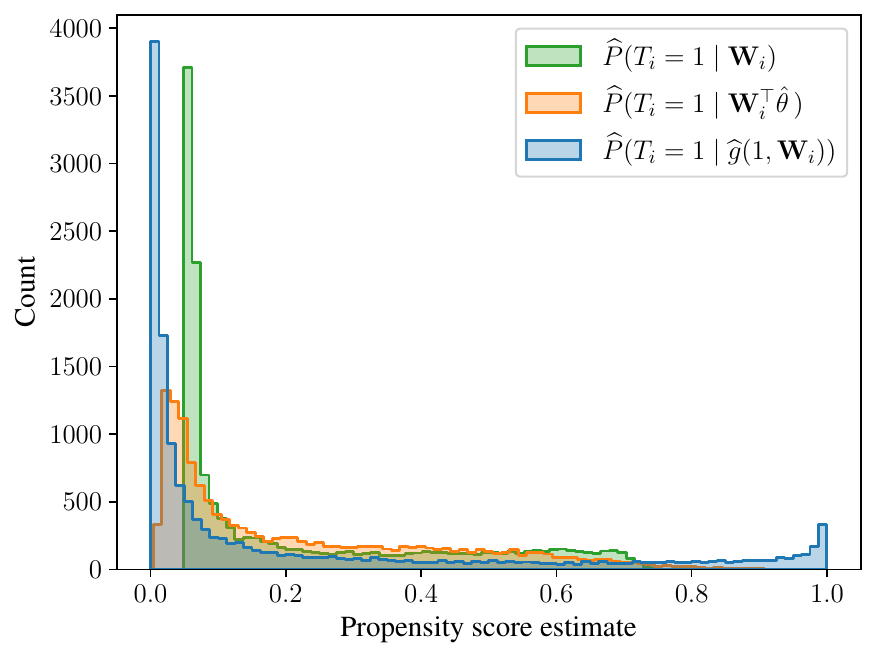}
    \caption{Distribution of estimated propensity scores based on the full
    covariate set $\bW$, the single-index $\bW^\top \widehat{\theta}$, and the
    outcome predictions $\widehat{g}(1,\bW)$, respectively.}
    \label{fig:NHANESpropensities}
\end{figure}

A pulse pressure of 40 mmHg is considered normal, and as the pressure increases
past 50 mmHg, the risk of cardiovascular diseases is reported to increase. Some
studies have used 60 mmHg as the threshold for high pulse pressure
\citep{homan2024physiology}, and thus we consider the following treatment
variable corresponding to high pulse pressure:
\begin{align*}
    T = \one\left( \textrm{pulse pressure} > 60 \textrm{ mmHg}\right).
\end{align*}

For the binary outcome regression we consider logistic regression and a variant
of the neural network from Section~\ref{sec:simestimators} with an additional
`sigmoid-activation' on the output layer. Based on 5-fold cross-validation,
logistic regression yields a slightly larger log-loss than the neural
network estimator.

Figure~\ref{fig:NHANESpropensities} shows the distribution of the estimated
propensity scores used in the AIPW, DOPE-IDX, and DOPE-BCL, based on neural
network regression. As expected, we see that the full covariate set yields
propensity scores that essentially violate positivity, with this effect being
less pronounced for DOPE-IDX and even less so for DOPE-BCL, where the scores are
comparatively less extreme.

\begin{table}
    \centering
    \begin{tabular}{lrrr}
\toprule
Estimator & Estimate & BS se & BS CI \\
\midrule
Regr. (Logistic) & $0.027$ & $0.009$ & $(0.012, 0.048)$ \\
DOPE-BCL (Logistic) & $0.024$ & $0.010$ & $(0.004, 0.040)$ \\
Naive contrast & $0.388$ & $0.010$ & $(0.369, 0.407)$ \\
DOPE-BCL (NN) & $0.010$ & $0.012$ & $(0.001, 0.043)$ \\
DOPE-IDX (NN) & $0.016$ & $0.013$ & $(0.001, 0.050)$ \\
Regr. (NN) & $0.022$ & $0.013$ & $(0.002, 0.049)$ \\
AIPW (Logistic) & $0.022$ & $0.016$ & $(-0.013, 0.051)$ \\
AIPW (NN) & $0.019$ & $0.017$ & $(-0.019, 0.048)$ \\
IPW (Logistic) & $-0.046$ & $0.027$ & $(-0.118, -0.010)$ \\
\bottomrule
\end{tabular}

    \vspace{\baselineskip}
    \caption{Adjusted effect estimates of unhealthy pulse pressure on mortality
      based on NHANES dataset. Estimators are sorted in increasing order after
      bootstrap (BS) standard error.}
    \label{tab:pulsepressure}
\end{table}
Table~\ref{tab:pulsepressure} shows the estimated treatment effects
$\widehat{\Delta} = \widehat{\mu}_1-\widehat{\mu}_0$ for various estimators
together with $95\%$ bootstrap confidence intervals. The estimators are sorted
according to bootstrap variance based on $B=1000$ bootstraps. Because we do not
know the true effect, we cannot directly decide which estimator is best. The
naive contrast provides a substantially larger estimate than all of the
adjustment estimators, which indicates that the added effect of high pulse
pressure on mortality when adjusting is different from the unadjusted effect.
The IPW estimator is the only estimator to yield a negative estimate of the
adjusted effect, and its bootstrap variance is also significantly larger than
the other estimators. Thus it is plausible, that the IPW estimator fails to
adjust appropriately. This is not surprising given extreme distribution of the
propensity weights, as shown in blue in Figure~\ref{fig:NHANESpropensities}.

The remaining estimators yield comparable estimates of the adjusted effect, with
the logistic regression based estimators having a marginally larger estimates
than their neural network counterparts. The (bootstrap) standard errors are
comparable for the DOPE and regression estimators, but the AIPW estimators have
slightly larger standard errors. As a result, the added effect of
high pulse pressure on mortality cannot be considered statistically significant
for the AIPW estimators, whereas it can for the other estimators. 

In summary, the results indicate that the choice treatment effect estimator
impacts the final estimate and confidence interval. While it is uncertain which
estimator is superior for this particular application, the DOPE estimators seem
to offer a reasonable and stable estimate. 

\section{Discussion}\label{sec:discussion} In this paper, we address the
challenges posed by complex data with unknown underlying structure, an
increasingly common scenario in observational studies. Specifically, we have
formulated a refined and general formalism for studying efficiency of covariate
adjustment. This formalism extends and builds upon the efficiency principles
derived from causal graphical models, in particular results provided by
\citet{rotnitzky2020efficient}. Our framework led to the identification of a
description of the covariate information that gives the theoretically most
efficient estimator of the adjusted mean. This efficient description is similar
to, but more general than, the graphically optimal adjustment set, and as
demonstrated by the single index model, the description can contain strictly
less information than the optimal adjustment set.

Based on our theoretical results, we introduced DOPE for more efficient
estimation of the ATE. DOPE attempts to learn a good description from data,
directed by our theory, that will improve upon efficiency. If we had oracle
knowledge about a graphically optimal adjustment set, DOPE might very well be
outperformed by an estimator that uses this knowledge, but this is not a good
comparison. DOPE does not use oracle knowledge but will attempt to learn enough
about the graphical structure to adjust efficiently. By taking the
representation model in DOPE to be the finite set of coordinate projections,
DOPE would be directly attempting to learn the optimal adjustment set. Though it
is beyond the scope of this paper, we believe that it would be very interesting
to explore how DOPE performs in a graphical modeling context, in particular if a
flexible representation model could improve on the efficiency compared to only
learning the optimal adjustment set.

In addition, we emphasize the following possible research directions:

\textbf{Extension to other targets and causal effects}: Our focus has
predominantly been on the adjusted mean $\mu_t =
\ex[\ex[Y\given T=t,\bW]]$. However, the extension of our methodologies to
continuous treatments, instrumental variables, or other causal effects, such as
the average treatment effect among the treated, is an area that warrants
in-depth exploration. We suspect that many of the fundamental ideas can be
modified to prove analogous results for such target parameters.  It would
also be interesting to consider conditional average treatment effects although
this is likely more difficult due to the non-existence of influence functions
for such functional targets.

\textbf{High-dimensional covariates}: While our theoretical results are
derived under fixed-dimensional covariates, extending these results to
high-dimensional settings where the number of covariates grows with the sample
size may be of practical interest. There are certain settings where using a
low-dimensional representation is already common practice, for example by using
OLS on the components selected by a LASSO regression \citep{lee2016exact}. Our
framework could help expand such practices to more general settings.

\textbf{Beyond neural networks}: While our examples and simulation study has
applied DOPE with neural networks, its compatibility with other regression
methods such as kernel regression and gradient boosting opens new avenues for
alternative adjustment estimators. Investigating such estimators can provide
practical insights and broaden the applicability of our approach. The Outcome
Highly Adapted Lasso \citep{ju2020robust} is a similar method in this direction
that leverages the Highly Adaptive Lasso \citep{benkeser2016highly} for robust
treatment effect estimation.

\textbf{Causal representation learning integration}: Another possible direction
for future research is the integration of our efficiency analysis into the
broader concept of causal representation learning \citep{scholkopf2021toward}.
This line of research, which is not concerned with specific downstream tasks,
could potentially benefit from some of the insights of our efficiency analysis
and the DOPE framework.

\textbf{Implications of sample splitting}: Our current asymptotic analysis
relies on the implementation of three distinct sample splits. Yet, our
simulation results suggest that sample splitting might not be necessary. Using
the notation of Algorithm~\ref{alg:generalalg}, we hypothesize that the
independence assumption $\mathcal{I}_3 \cap (\mathcal{I}_1 \cup \mathcal{I}_2)=
\emptyset$ could be replaced with Donsker class conditions. However, the
theoretical justifications for setting $\mathcal{I}_1$ identical to
$\mathcal{I}_2$ are not as evident, thus meriting additional exploration into
this potential simplification.

\textbf{Valid and efficient confidence intervals}: Confidence
intervals based on the conditional asymptotic standard errors  
may not provide adequate coverage of the unconditional adjusted mean, as shown
in the simulation study in Section~\ref{sec:simulationinference}. We
demonstrated, however, that bootstrap estimation of standard errors could result
in adequate coverage for DOPE. As indicated in
Remark~\ref{rmk:confidenceinterval}, it would be interesting to explore the
construction of valid confidence intervals in greater depths. This includes: (i)
theoretical justification of the bootstrap, (ii) adding a bias correction, (iii)
constructing a consistent estimator of the unconditional variance. To implement
(i), an approach could be to generalize Theorem~\ref{thm:SIregularity} to
establish general differentiability of the adjusted mean with respect to general
representations, and then construct an estimator of the gradient. However, one
must assess whether this bias correction could negate the efficiency benefits of
DOPE.

\section*{Acknowledgments}
We thank Leonard Henckel for helpful
discussions. AMC and NRH were supported by a research grant (NNF20OC0062897)
from Novo Nordisk Fonden. ARL was supported by a research grant (0069071)
from Novo Nordisk Fonden.

\bibliographystyle{agsm}
\bibliography{bibliography.bib}

@article{rosenbaum1983central,
  title={The central role of the propensity score in observational studies for causal effects},
  author={Rosenbaum, Paul R and Rubin, Donald B},
  journal={Biometrika},
  volume={70},
  number={1},
  pages={41--55},
  year={1983},
  publisher={Oxford University Press}
}

@inproceedings{veitch2020adapting,
  title={Adapting text embeddings for causal inference},
  author={Veitch, Victor and Sridhar, Dhanya and Blei, David},
  booktitle={Conference on Uncertainty in Artificial Intelligence},
  pages={919--928},
  year={2020},
  organization={PMLR}
}

@article{rotnitzky2020efficient,
  title={Efficient Adjustment Sets for Population Average Causal Treatment Effect Estimation in Graphical Models.},
  author={Rotnitzky, Andrea and Smucler, Ezequiel},
  journal={Journal of Machine Learning Research},
  volume={21},
  number={188},
  pages={1--86},
  year={2020}
}

@article{hahn1998role,
  title={On the role of the propensity score in efficient semiparametric estimation of average treatment effects},
  author={Hahn, Jinyong},
  journal={Econometrica},
  pages={315--331},
  year={1998},
  publisher={JSTOR}
}

@article{perkovic2018complete,
author  = {Emilija Perkovi\'c and Johannes Textor and Markus Kalisch and Marloes H. Maathuis},
  title   = {Complete Graphical Characterization and Construction of Adjustment Sets in Markov Equivalence Classes of Ancestral Graphs},
  journal = {Journal of Machine Learning Research},
  year    = {2018},
  volume  = {18},
  number  = {220},
  pages   = {1--62},
}

@incollection{emery2001vershik,
  title={On {V}ershik’s standardness criterion and {T}sirelson’s notion of cosiness},
  author={{\'E}mery, Michel and Schachermayer, Walter},
  booktitle={S{\'e}minaire de Probabilit{\'e}s XXXV},
  pages={265--305},
  year={2001},
  publisher={Springer}
}

@article{chernozhukov2018,
    author = {Chernozhukov, Victor and Chetverikov, Denis and Demirer, Mert and Duflo, Esther and Hansen, Christian and Newey, Whitney and Robins, James},
    title = "{Double/debiased machine learning for treatment and structural parameters}",
    journal = {The Econometrics Journal},
    volume = {21},
    number = {1},
    pages = {C1-C68},
    year = {2018},
    month = {01},
    issn = {1368-4221}
}

@article{henckel2022graphical,
  title={Graphical criteria for efficient total effect estimation via adjustment in causal linear models},
  author={Henckel, Leonard and Perkovi{\'c}, Emilija and Maathuis, Marloes H},
  journal={Journal of the Royal Statistical Society Series B: Statistical Methodology},
  volume={84},
  number={2},
  pages={579--599},
  year={2022},
  publisher={Oxford University Press}
}

@article{smucler2019unifying,
  title={A unifying approach for doubly-robust l1-regularized estimation of causal contrasts},
  author={Smucler, Ezequiel and Rotnitzky, Andrea and Robins, James M},
  journal={arXiv:1904.03737},
  year={2019}
}

@book{kolmogoroff1933grundbegriffe,
  author = {Kolmogoroff, Andrey},
  title = {Grundbegriffe der Wahrscheinlichkeitsrechnung},
  publisher = {Springer Berlin, Heidelberg},
  year = {1933},
  edition = {1},
  series = {Ergebnisse der Mathematik und Ihrer Grenzgebiete. 1. Folge},
  isbn = {978-3-642-49596-0},
  pages = {V, 62},
}

@book{peters2017elements,
  title={Elements of causal inference: foundations and learning algorithms},
  author={Peters, Jonas and Janzing, Dominik and Sch{\"o}lkopf, Bernhard},
  year={2017},
  publisher={The MIT Press}
}

@book{kallenberg2021foundations,
    year = 2021,
    publisher = {Springer International Publishing},
    author = {Olav Kallenberg},
    title = {Foundations of Modern Probability},
    edition = 3
}

@book{billingsley2017probability,
  title={Probability and measure},
  author={Billingsley, Patrick},
  year={2017},
  publisher={John Wiley \& Sons}
}

@book{pearl2009causality,
  title={Causality},
  author={Pearl, Judea},
  year={2009},
  publisher={Cambridge University Press}
}

@article{shortreed2017outcome,
  title={Outcome-adaptive lasso: variable selection for causal inference},
  author={Shortreed, Susan M and Ertefaie, Ashkan},
  journal={Biometrics},
  volume={73},
  number={4},
  pages={1111--1122},
  year={2017},
  publisher={Wiley Online Library}
}

@article{ju2020robust,
  title={Robust inference on the average treatment effect using the outcome highly adaptive lasso},
  author={Ju, Cheng and Benkeser, David and {van der Laan}, Mark J},
  journal={Biometrics},
  volume={76},
  number={1},
  pages={109--118},
  year={2020},
  publisher={Wiley Online Library}
}

@article{benkeser2020nonparametric,
author = {Benkeser, David and Cai, Weixin and {van der Laan}, Mark},
year = {2020},
month = {08},
pages = {484-495},
title = {A Nonparametric Super-Efficient Estimator of the Average Treatment Effect},
volume = {35},
journal = {Statistical Science}
}

@inproceedings{greenewald2021high,
  title={High-dimensional feature selection for sample efficient treatment effect estimation},
  author={Greenewald, Kristjan and Shanmugam, Karthikeyan and Katz, Dmitriy},
  booktitle={International Conference on Artificial Intelligence and Statistics},
  pages={2224--2232},
  year={2021},
  organization={PMLR}
}

@article{parkinson2023linear,
  title={Linear Neural Network Layers Promote Learning Single-and Multiple-Index Models},
  author={Parkinson, Suzanna and Ongie, Greg and Willett, Rebecca},
  journal={arXiv:2305.15598},
  year={2023}
}

@book{van2000asymptotic,
  title={Asymptotic statistics},
  author={van der Vaart, Aad W},
  volume={3},
  year={2000},
  publisher={Cambridge university press}
}

@article{hansen2008prognostic,
  title={The prognostic analogue of the propensity score},
  author={Hansen, Ben B},
  journal={Biometrika},
  volume={95},
  number={2},
  pages={481--488},
  year={2008},
  publisher={Oxford University Press}
}

@article{robins1995semiparametric,
  title={Semiparametric efficiency in multivariate regression models with missing data},
  author={Robins, James M and Rotnitzky, Andrea},
  journal={Journal of the American Statistical Association},
  volume={90},
  number={429},
  pages={122--129},
  year={1995},
  publisher={Taylor \& Francis}
}

@book{van2011targeted,
  title={Targeted learning: causal inference for observational and experimental data},
  author={{van der Laan}, Mark J and Rose, Sherri},
  volume={4},
  year={2011},
  publisher={Springer}
}

@article{lundberg2020local,
  title={From local explanations to global understanding with explainable {AI} for trees},
  author={Lundberg, Scott M and Erion, Gabriel and Chen, Hugh and DeGrave, Alex and Prutkin, Jordan M and Nair, Bala and Katz, Ronit and Himmelfarb, Jonathan and Bansal, Nisha and Lee, Su-In},
  journal={Nature machine intelligence},
  volume={2},
  number={1},
  pages={56--67},
  year={2020},
  publisher={Nature Publishing Group UK London}
}

@article{franklin1999pulse,
  title={Is pulse pressure useful in predicting risk for coronary heart disease? {T}he {F}ramingham Heart Study},
  author={Franklin, Stanley S and Khan, Shehzad A and Wong, Nathan D and Larson, Martin G and Levy, Daniel},
  journal={Circulation},
  volume={100},
  number={4},
  pages={354--360},
  year={1999},
  publisher={Am Heart Assoc}
}

@article{shi2019adapting,
  title={Adapting neural networks for the estimation of treatment effects},
  author={Shi, Claudia and Blei, David and Veitch, Victor},
  journal={Advances in neural information processing systems},
  volume={32},
  year={2019}
}

@article{balde2023reader,
  title={Reader reaction to “{O}utcome-adaptive lasso: Variable selection for causal inference” by {S}hortreed and {E}rtefaie (2017)},
  author={Bald{\'e}, Ismaila and Yang, Yi Archer and Lefebvre, Genevi{\`e}ve},
  journal={Biometrics},
  volume={79},
  number={1},
  pages={514--520},
  year={2023},
  publisher={Wiley Online Library}
}

@article{devlin2018bert,
  title={{BERT}: Pre-training of deep bidirectional transformers for language understanding},
  author={Devlin, Jacob and Chang, Ming-Wei and Lee, Kenton and Toutanova, Kristina},
  journal={arXiv:1810.04805},
  year={2018}
}

@article{forre2023mathematical,
  title={A Mathematical Introduction to Causality},
  author={Forr{\'e}, Patrick and Mooij, Joris M},
  year={2023}
}

@article{guo2022confounder,
  title={Confounder Selection: Objectives and Approaches},
  author={Guo, F Richard and Lundborg, Anton Rask and Zhao, Qingyuan},
  journal={arXiv:2208.13871},
  year={2022}
}

@article{lundborg2023perturbation,
  title={Perturbation-based Analysis of Compositional Data},
  author={Lundborg, Anton Rask and Pfister, Niklas},
  journal={arXiv:2311.18501},
  year={2023}
}

@article{powell1989semiparametric,
  title={Semiparametric estimation of index coefficients},
  author={Powell, James L and Stock, James H and Stoker, Thomas M},
  journal={Econometrica: Journal of the Econometric Society},
  pages={1403--1430},
  year={1989},
  publisher={JSTOR}
}

@article{newey1994,
  title = {The {{Asymptotic Variance}} of {{Semiparametric Estimators}}},
  author = {Newey, Whitney K.},
  year = 1994,
  journal = {Econometrica},
  volume = {62},
  number = {6},
  eprint = {2951752},
  eprinttype = {jstor},
  pages = {1349--1382},
  publisher = {[Wiley, Econometric Society]},
  issn = {0012-9682}
}

@article{ichimura1993semiparametric,
  title={Semiparametric least squares ({SLS}) and weighted {SLS} estimation of single-index models},
  author={Ichimura, Hidehiko},
  journal={Journal of Econometrics},
  volume={58},
  number={1-2},
  pages={71--120},
  year={1993},
  publisher={Elsevier}
}

@article{delecroix2003efficient,
  title={Efficient estimation in conditional single-index regression},
  author={Delecroix, Michel and H{\"a}rdle, Wolfgang and Hristache, Marian},
  journal={Journal of Multivariate Analysis},
  volume={86},
  number={2},
  pages={213--226},
  year={2003},
  publisher={Elsevier}
}

@article{cox1997plan,
  title={Plan and Operation of the {NHANES I} Epidemiologic Follow-up Study, 1992},
  author={Cox, Christine S. and Feldman, Jacob J. and Golden, Cordell D. and Lane, Madelyn A. and Madans, Jennifer H. and Mussolino, Michael E. and Rothwell, Sandra T.},
  journal={Vital and health statistics.},
  year={1997},
  month={12},
  publisher={National Center for Health Statistics (U.S.)},
  series={DHHS Publication}
}

@book{van1995python,
  title={Python reference manual},
  author={Van Rossum, Guido and Drake, Fred L and others},
  volume={111},
  year={1995},
  publisher={Centrum voor Wiskunde en Informatica Amsterdam}
}

@incollection{paszke2019pytorch,
title = {PyTorch: An Imperative Style, High-Performance Deep Learning Library},
author = {Paszke, Adam and Gross, Sam and Massa, Francisco and Lerer, Adam and Bradbury, James and Chanan, Gregory and Killeen, Trevor and Lin, Zeming and Gimelshein, Natalia and Antiga, Luca and Desmaison, Alban and Kopf, Andreas and Yang, Edward and DeVito, Zachary and Raison, Martin and Tejani, Alykhan and Chilamkurthy, Sasank and Steiner, Benoit and Fang, Lu and Bai, Junjie and Chintala, Soumith},
booktitle = {Advances in Neural Information Processing Systems 32},
pages = {8024--8035},
year = {2019},
publisher = {Curran Associates, Inc.}
}

@article{gnecco2023boosted,
  title={Boosted Control Functions},
  author={Gnecco, Nicola and Peters, Jonas and Engelke, Sebastian and Pfister, Niklas},
  journal={arXiv:2310.05805},
  year={2023}
}

@article{van2010collaborative,
  title={Collaborative double robust targeted maximum likelihood estimation},
  author={{van der Laan}, Mark J and Gruber, Susan},
  journal={The international journal of biostatistics},
  volume={6},
  number={1},
  year={2010},
  publisher={De Gruyter}
}

@article{ju2019scalable,
  title={Scalable collaborative targeted learning for high-dimensional data},
  author={Ju, Cheng and Gruber, Susan and Lendle, Samuel D and Chambaz, Antoine and Franklin, Jessica M and Wyss, Richard and Schneeweiss, Sebastian and {van der Laan}, Mark J},
  journal={Statistical methods in medical research},
  volume={28},
  number={2},
  pages={532--554},
  year={2019},
  publisher={SAGE Publications Sage UK: London, England}
}

@article{homan2024physiology,
  title = {Physiology, Pulse Pressure},
  author = {Homan, T.D. and Bordes, S.J. and Cichowski, E.},
  year = {2024},
  howpublished = {In: StatPearls [Internet]},
  publisher = {StatPearls Publishing},
  address = {Treasure Island (FL)},
  note = {[Updated 2023 Jul 10]}
}

@article{scholkopf2021toward,
  title={Toward causal representation learning},
  author={Sch{\"o}lkopf, Bernhard and Locatello, Francesco and Bauer, Stefan and Ke, Nan Rosemary and Kalchbrenner, Nal and Goyal, Anirudh and Bengio, Yoshua},
  journal={Proceedings of the IEEE},
  volume={109},
  number={5},
  pages={612--634},
  year={2021},
  publisher={IEEE}
}

@article{shah2020hardness,
  title={The hardness of conditional independence testing and the generalised covariance measure},
  author={Shah, Rajen D and Peters, Jonas},
journal = {The Annals of Statistics},
	number = {3},
	pages = {1514 -- 1538},
	volume = {48},
  year={2020}
}

@article{christgau2023nonparametric,
  title={Nonparametric conditional local independence testing},
  author={Christgau, Alexander Mangulad and Petersen, Lasse and Hansen, Niels Richard},
  journal={The Annals of Statistics},
  volume={51},
  number={5},
  pages={2116--2144},
  year={2023},
  publisher={Institute of Mathematical Statistics}
}

@article{zivich2021machine,
  title={Machine learning for causal inference: on the use of cross-fit estimators},
  author={Zivich, Paul N and Breskin, Alexander},
  journal={Epidemiology (Cambridge, Mass.)},
  volume={32},
  number={3},
  pages={393},
  year={2021},
  publisher={NIH Public Access}
}

@article{scikit-learn,
  title={Scikit-learn: Machine Learning in {P}ython},
  author={Pedregosa, F. and Varoquaux, G. and Gramfort, A. and Michel, V.
          and Thirion, B. and Grisel, O. and Blondel, M. and Prettenhofer, P.
          and Weiss, R. and Dubourg, V. and Vanderplas, J. and Passos, A. and
          Cournapeau, D. and Brucher, M. and Perrot, M. and Duchesnay, E.},
  journal={Journal of Machine Learning Research},
  volume={12},
  pages={2825--2830},
  year={2011}
}

@inproceedings{benkeser2016highly,
  title={The highly adaptive lasso estimator},
  author={Benkeser, David and {van der Laan}, Mark},
  booktitle={2016 IEEE international conference on data science and advanced analytics (DSAA)},
  pages={689--696},
  year={2016},
  organization={IEEE}
}

@article{bojer2021kaggle,
  title={Kaggle forecasting competitions: An overlooked learning opportunity},
  author={Bojer, Casper Solheim and Meldgaard, Jens Peder},
  journal={International Journal of Forecasting},
  volume={37},
  number={2},
  pages={587--603},
  year={2021},
  publisher={Elsevier}
}

@article{uhler2013geometry,
  title={Geometry of the faithfulness assumption in causal inference},
  author={Uhler, Caroline and Raskutti, Garvesh and B{\"u}hlmann, Peter and Yu, Bin},
  journal={The Annals of Statistics},
  pages={436--463},
  year={2013},
  publisher={JSTOR}
}

@article{chickering2004large,
  title={Large-sample learning of Bayesian networks is {NP}-hard},
  author={Chickering, Max and Heckerman, David and Meek, Chris},
  journal={Journal of Machine Learning Research},
  volume={5},
  pages={1287--1330},
  year={2004}
}

@book{bickel1993efficient,
  title={Efficient and adaptive estimation for semiparametric models},
  author={Bickel, Peter J and Klaassen, Chris AJ and Bickel, Peter J and Ritov, Ya’acov and Klaassen, J and Wellner, Jon A and Ritov, YA'Acov},
  volume={4},
  year={1993},
  publisher={Springer}
}

@inbook{bolthausen2002lectures,
  title={Lectures on Probability Theory and Statistics},
  subtitle={Ecole d'Été de Probabilités de Saint-Flour XXIX - 1999},
  author={{van der Vaart}, Aad},
  chapter={III},
  series={Lecture Notes in Mathematics},
  volume={1781},
  year={2002},
  publisher={Springer Berlin, Heidelberg},
  pages={323-466},
  edition={1}
}

@article{hines2022demystifying,
  title={Demystifying statistical learning based on efficient influence functions},
  author={Hines, Oliver and Dukes, Oliver and Diaz-Ordaz, Karla and Vansteelandt, Stijn},
  journal={The American Statistician},
  volume={76},
  number={3},
  pages={292--304},
  year={2022},
  publisher={Taylor \& Francis}
}

@article{young2024rose,
  title={ROSE Random Forests for Robust Semiparametric Efficient Estimation},
  author={Young, Elliot H and Shah, Rajen D},
  journal={arXiv preprint arXiv:2410.03471},
  year={2024}
}

@article{lecam1970assumptions,
  title={On the assumptions used to prove asymptotic normality of maximum likelihood estimates},
  author={{Le Cam}, Lucien},
  journal={The Annals of Mathematical Statistics},
  volume={41},
  number={3},
  pages={802--828},
  year={1970},
  publisher={JSTOR}
}

@article{lee2016exact,
author = {Jason D. Lee and Dennis L. Sun and Yuekai Sun and Jonathan E. Taylor},
title = {{Exact post-selection inference, with application to the lasso}},
volume = {44},
journal = {The Annals of Statistics},
number = {3},
publisher = {Institute of Mathematical Statistics},
pages = {907 -- 927},
keywords = {Confidence interval, hypothesis test, Lasso, Model selection},
year = {2016}
}

\clearpage
\appendix
\appendixname{Supplement to ``Efficient adjustment for complex covariates: Gaining efficiency with DOPE''}
The supplementary material is organized as follows:
    Section~\ref{sec:efficiencybounds} contains a description and derivation of
    some semiparametric efficiency bounds for adjusted means.
    Section~\ref{sup:proofs} contains proofs of the results in the main text and
    a few auxiliary results; Section~\ref{sup:simdetails} contains details on
    the experiments in Section~\ref{sec:experiments}; and
    Section~\ref{sup:crossfitting} contains a description ofa
    cross-fit variant of the DOPE algorithm. 

\setcounter{section}{0}
\setcounter{algocf}{0}
\setcounter{table}{0}
\renewcommand\thesection{\Alph{section}}
\renewcommand\thealgocf{\Alph{section}.\arabic{algocf}}
\renewcommand\thetable{\Alph{section}.\arabic{table}}

\section{Efficiency bounds for representation
adjustment}\label{sec:efficiencybounds}

The target estimand is throughout the adjusted mean $\mu_t = \mu_t(\bW; P)$, which is 
well defined assuming only positivity (Assumption~\ref{asm:positivity}).
\citet{hahn1998role} showed that $\bV_t(\bW;P)$ is the
semiparametric efficiency bound for RAL estimators of $\mu_t$ when no further model 
assumptions are imposed. 

Our main results in Section~\ref{sec:informationbounds}
are on the asymptotic \emph{relative} efficiency of particular estimators under model assumptions pertaining to descriptions of $\bW$.  
It is natural to ask if any of those estimators is semiparametrically efficient, that is, 
if the asymptotic variance of one such estimator equals a lower bound.
In other words, we ask if $\bV_t(\mathcal{Z}; P)$ is a lower bound for the asymptotic 
variance of RAL estimators in a model $\mathcal{P}$, for which $\mathcal{Z}$ is 
$\mathcal{P}$-valid, $\mathcal{P}$-OMS or $\mathcal{P}$-ODS. To approach this 
question we include in this section the following:
\begin{itemize}
    \item We state and prove rigorously two related but different results on 
    asymptotic efficiency bounds for the submodel of distributions $P$ that have a given 
    and common $P$-ODS representation $\mathbf{Z}$.
     \item We discuss in more detail the relation between efficiency 
     bounds and the main results obtained in the paper.
\end{itemize}

We emphasize that it is far more 
difficult to rigorously establish lower efficiency bounds, that depend 
heavily on the specific model $\mathcal{P}$, than comparing the 
relative efficiency of particular estimators. 

\subsection{Asymptotic efficiency bounds} \label{sec:asymptoticefficiencybounds}
The lower efficiency bounds we state below require some additional structure on 
the models considered. We let $(T,\bW,Y)$ denote a template
observation taking values in the sample space $\mathbb{T}\times \mathbb{W} \times \mathbb{R}$.
We suppose that $\nu_\mathbb{T}$, $\nu_\mathbb{W}$, and $\nu_\mathbb{R}$, are
$\sigma$-finite measures on $\mathbb{T}$, $\mathbb{W}$, and $\mathbb{R}$,
respectively, and we set $\nu = \nu_\mathbb{T} \otimes \nu_\mathbb{W} \otimes
\nu_\mathbb{R}$. We let $\mathcal{P}_\nu$ denote the set of probability measures
that are absolutely continuous with respect to $\nu$. 

Now let $\phi \colon \mathbb{W} \to \real^k$ be a given measurable map, and let $\bZ=\phi(\bW)$
be the corresponding representation. We then consider the model given as:
\begin{align} \label{eq:P-ods}
    \mathcal{P}^{\textsc{ods}} \coloneq \big\{P \in \mathcal{P}_\nu \given \, 
        \forall t\in \mathbb{T}\colon \mathbb{P}_P(0<\pi_t(\bW;P)<1)=1,\,
        \ex_P[Y^2]<\infty, \, \bZ \text{ is $P$-ODS} 
    \,\big\}
\end{align}

The canonical asymptotic efficiency bound for the estimand $\mu_t$ relative to 
the model  $\mathcal{P}^{\textsc{ods}}$ is, loosely speaking, the
supremum of Cramér-Rao lower bounds taken over all finite-dimensional parametric
submodels. To compute this supremum, it is typically sufficient to consider
one-dimensional submodels, i.e., paths $[0,1) \ni \epsilon \longmapsto
P_\epsilon \in \mathcal{P}_\nu$. Some care must be taken, though, as the paths should be
restricted to depend ``smoothly'' on the parameter $\epsilon$. 

With a suitable notion of path differentiability, the \emph{influence function} 
is defined as the Riesz representer of the gradient of $\mu_t$, and the asymptotic 
efficiency bound can then be computed as the variance of the influence
function \citep[Chapter 3.3]{bickel1993efficient}. 
The weakest smoothness assumption is \textit{differentiability in quadratic mean (DQM)}, or
simply \textit{regular}, see \citet{lecam1970assumptions} or Definition 2.1 in
\citet{bickel1993efficient}, and using this notion of differentiability 
our first result states that $\psi_t(\bZ;P)$ is, indeed, the influence function 
of $\mu_t$ in the smaller model
\begin{align}
    \mathcal{P}_{c,C}^{\textsc{ods}}
        = \big\{ P\in \mathcal{P}^{\textsc{ods}} \given
            \forall t \in \mathbb{T}: 
        \mathbb{P}_{P}(\pi_t(\bW; P) \in [c,1-c]) = 1; \mathbb{P}_{P}(|Y|\leq C) = 1 
        \big\} 
\end{align}
where $c,C>0$ are fixed constants. 

\begin{thm}\label{thm:dqmpaths} The estimand $\mu_t$, considered as a map
    $\mathcal{P}_{c,C}^{\textsc{ods}}\to \real$, is pathwise differentiable at
    $P\in \mathcal{P}_{c,C}^{\textsc{ods}}$ with respect to all regular paths
    and with influence function $\psi_t(\bZ;P)$. 
\end{thm}
As a consequence of Theorem~\ref{thm:dqmpaths}, the variance
$\mathbb{V}_t(\bZ;P) = \var_P(\psi_t(\bZ;P))$ is the supremum of all Cramér-Rao
lower bounds for $\mu_t$ over all regular parametric submodels \cite[Thm.
3.3.1]{bickel1993efficient}. In the proof we construct a one-dimensional
submodel with $\mathbb{V}_t(\bZ;P)$ as its Cramér-Rao lower bound, i.e., the
supremum is attained for this submodel. 

Another lower bound result is the
following: for any $P\in \mathcal{P}_{c,C}^{\textsc{ods}}$, subconvex loss
function $\ell \colon \real \to [0,\infty)$, and sequence of estimators $T_n$, 
\begin{align*}
    \inf_{\delta > 0} \liminf_{n\to \infty} \sup_{Q \colon \mathrm{TV}(P,Q)<\delta}
    \ex_{Q}[\ell(\sqrt{n}(T_n - \mu_t(Q)))]
         \geq \int \ell \mathrm{d}\mathrm{N}(0,\mathbb{V}_t(\bZ;P))
\end{align*}
where $\mathrm{TV}$ denotes the total variation distance. See Theorem III.2.5
by \citet{bolthausen2002lectures} and Theorem 25.21 by \citet{van2000asymptotic}
for further details.

While DQM is sufficient to prove general results in
nonparametric theory, \cite{bolthausen2002lectures} points out
that the collection of regular paths is often ``too big'' and impractical to
work with for concrete examples. For example, a regular path does not
necessarily yield a pointwise differentiable likelihood, and hence the product
rule cannot be used to compute the score. Therefore further restrictions are
required to formulate a rigorous result. Recent influential papers, such as
\cite{hines2022demystifying,chernozhukov2018}, focus on paths obtained from
convex combinations, i.e., paths of the form $\epsilon \mapsto (1-\epsilon)P +
\epsilon \Tilde{P}$. This more restricted collection of paths
corresponds to a weaker notion of pathwise differentiability, but it can be
convenient for computing the functional form of the influence
function. 

The model $\mathcal{P}^{\textsc{ods}}$ is, however, not closed under convex mixtures,
and we therefore need a more flexible class of paths for this particular model.  
The following theorem establishes the
pathwise differentiability over a fairly general type of paths, similar to the
ones considered by \cite{hahn1998role}.
\begin{thm}\label{thm:smoothpaths}
    Define the collection of paths
    \begin{align*}
    \Gamma&\coloneq \Big\{
        (P_{\epsilon})_{\epsilon\in [0,1)} \subseteq \mathcal{P}^{\textsc{ods}} \given
        \epsilon \mapsto \frac{\mathrm{d}P_\epsilon}{\mathrm{d}\nu}
        \text{   regular and $\nu$-a.e. $C^1$}\Big\}.
    \end{align*}
    Then, at any distribution $P\in \mathcal{P}^{\textsc{ods}}$, the estimand
    $\mu_t$ is pathwise differentiable with respect to $\Gamma$ with influence
    function $\psi_t(\bZ;P)$.
\end{thm}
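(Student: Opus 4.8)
The plan is to fix $P \in \mathcal{P}^{\textsc{ods}}$ and an arbitrary path $(P_\epsilon)_\epsilon \in \Gamma$ through $P$, with $\nu$-density $p_\epsilon$ and score $s = \partial_\epsilon \log p_\epsilon|_{\epsilon=0} \in L^2(P)$, which has $\ex_P[s]=0$ by regularity. Since every $P_\epsilon$ lies in $\mathcal{P}^{\textsc{ods}}$, the property $Y \indP \bW \given T,\bZ$ forces the factorization $p_\epsilon(t,\bw,y) = q_\epsilon(y \given t,\bz)\,r_\epsilon(t,\bw)$, where $\bz = \phi(\bw)$, the factor $q_\epsilon(\cdot \given t,\bz)$ is the conditional $\nu_{\mathbb{R}}$-density of $Y$ given $(T,\bZ)$, and $r_\epsilon$ is the joint density of $(T,\bW)$. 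Because the path is $\nu$-a.e.\ $C^1$, this factorization passes to the score level: differentiating $\log p_\epsilon = \log q_\epsilon + \log r_\epsilon$ gives $s = s_1 + s_2$, where $s_1 = s_1(t,\bz,y)$ depends on $\bw$ only through $\bz$ and satisfies $\ex_P[s_1 \given T,\bZ]=0$, while $s_2 = s_2(t,\bw)$ is a mean-zero function of $(T,\bW)$. This is precisely the tangent space of $\mathcal{P}^{\textsc{ods}}$ at $P$.

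Next I would differentiate the functional through the factorization. Writing $b_t(\bw;P_\epsilon) = \int y\,q_\epsilon(y \given t,\bz)\,\mathrm{d}\nu_{\mathbb{R}}(y)$, which depends on $\bw$ only through $\bz$, and letting $r^{\bW}_\epsilon$ denote the $\bW$-marginal of $r_\epsilon$, we have $\mu_t(P_\epsilon) = \int b_t(\bw;P_\epsilon)\,r^{\bW}_\epsilon(\bw)\,\mathrm{d}\nu_{\mathbb{W}}(\bw)$. Differentiating at $\epsilon=0$ and interchanging derivative and integral yields two terms,
\begin{align*}
  \frac{\mathrm{d}}{\mathrm{d}\epsilon}\Big|_0 \mu_t(P_\epsilon)
  = \ex_P\big[\ex_P[Y s_1 \given T=t,\bZ]\big]
    + \ex_P\big[b_t(\bZ;P)\,s_2\big],
\end{align*}
using $\partial_\epsilon b_t(\bw;P_\epsilon)|_0 = \ex_P[Y s_1 \given T=t,\bZ]$ and $\partial_\epsilon \log r^{\bW}_\epsilon|_0 = \ex_P[s_2 \given \bW]$ together with the tower property.

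It then remains to identify each term with the corresponding component of $\psi_t(\bZ;P)$. The tower/inverse-probability identity $\ex_P[\ex_P[Y s_1 \given T=t,\bZ]] = \ex_P[\one(T=t)\pi_t(\bZ;P)^{-1} Y s_1]$, combined with $\ex_P[s_1 \given T,\bZ]=0$ (which also annihilates the $b_t(\bZ;P)$ contribution) and $\ex_P[s_1]=0$, shows that the first term equals $\ex_P[\psi_t(\bZ;P)\,s_1]$. For the second term I would condition on $(T,\bW)$: the inverse-propensity part of $\psi_t(\bZ;P)$ contributes $\ex_P[Y - b_t(\bZ;P) \given T=t,\bW] = b_t(\bW;P) - b_t(\bZ;P) = 0$ by ODS, leaving $\ex_P[\psi_t(\bZ;P)\,s_2] = \ex_P[b_t(\bZ;P)\,s_2]$. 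Adding the two gives $\frac{\mathrm{d}}{\mathrm{d}\epsilon}|_0 \mu_t(P_\epsilon) = \ex_P[\psi_t(\bZ;P)\,s]$, the asserted pathwise differentiability. I would then record that $\psi_t(\bZ;P)$ itself splits into an $s_1$-type term $\one(T=t)\pi_t(\bZ;P)^{-1}(Y-b_t(\bZ;P))$ and an $s_2$-type term $b_t(\bZ;P)-\mu_t$, so it lies in the tangent space and is therefore the canonical gradient, confirming that the bound $\bV_t(\bZ;P)$ is attained. Equivalently, $\psi_t(\bZ;P)$ is the $L^2(P)$-projection of the nonparametric gradient $\psi_t(\bW;P)$ onto this tangent space, the projection identity reducing to $\ex_P[\pi_t(\bW;P)^{-1} \given T=t,\bZ] = \pi_t(\bZ;P)^{-1}$.

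The main obstacle is the analytic justification of the interchange of differentiation and integration in the second step: the model $\mathcal{P}^{\textsc{ods}}$ only imposes $\ex_P[Y^2]<\infty$ and non-strict positivity, so inverse propensities need not be bounded and a crude dominating function is unavailable. I would handle this by combining the $\nu$-a.e.\ $C^1$ assumption (which gives pointwise differentiability of the integrand and lets the product rule be applied to $q_\epsilon r^{\bW}_\epsilon$) with the differentiability-in-quadratic-mean property of the path, dominating the difference quotients via Cauchy--Schwarz against the $L^2(P)$ score $s$ and the square-integrability of $\psi_t(\bZ;P)$ (equivalently $\bV_t(\bZ;P)<\infty$). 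Verifying this integrability, and that the conditional-score decomposition $s = s_1 + s_2$ is valid in $L^2$ along every path of $\Gamma$, is the delicate part; the algebraic identities above are then routine.
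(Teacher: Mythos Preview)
Your proposal is correct and follows essentially the same route as the paper's (sketch) proof: exploit the ODS property to factorize $p_\epsilon$ into a $(Y\mid T,\bZ)$-part and a $(T,\bW)$-part, decompose the score accordingly, differentiate $\mu_t(P_\epsilon)$ through this factorization, and match the two resulting terms against the two pieces of $\psi_t(\bZ;P)$. Your two-factor grouping $p_\epsilon=q_\epsilon r_\epsilon$ is equivalent to the paper's three-factor version (which further splits $r_\epsilon$ into propensity and $\bW$-marginal), and you are more explicit than the paper about the dominated-convergence issues and about why $\psi_t(\bZ;P)$ lies in the tangent space; otherwise the arguments coincide.
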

Note that Theorem~\ref{thm:smoothpaths} does not imply
Theorem~\ref{thm:dqmpaths} and vice-versa, and their proofs are different. We
will give a full proof of Theorem~\ref{thm:dqmpaths} in Section~\ref{sec:proofA1} 
and merely sketch the proof of Theorem~\ref{thm:smoothpaths} in Section~\ref{sec:proofA2}.
Note also that the efficiency bound is
independent of the choice of dominating measure $\nu$. It is plausible that by invoking
the space of square root measures, the assumption of a dominating measure can be
removed, but at the cost of a significantly more technical argumentation.

From the special case where $\phi$ is the identity map, we recover the classical
efficiency bound $\mathbb{V}_t(\bW;P)$ for estimation of $\mu_t$ without  
additional model assumptions as established by \cite{hahn1998role}. The results above 
may be unsurprising, but we found it worthwhile stating 
them in a precise way, and we believe that Theorem~\ref{thm:dqmpaths} is a novel result.

\subsection{How our results relate to asymptotic efficiency bounds}

Suppose $\mathcal{P}$ is some model, that is, a collection of probability 
measures on $\mathbb{T}\times \mathbb{W} \times \mathbb{R}$ fulfilling 
Assumption \ref{asm:positivity}. Our results in Section~\ref{sec:informationbounds} 
then characterize potential efficiency gains by adjusting for a $P$-ODS 
description instead of adjusting for $\bW$. Theorem \ref{thm:COSefficiency}
shows that $\mathcal{Q}_P$ is effectively the minimal $P$-ODS description 
with the smallest possible asymptotic variance -- for the estimators we consider.
However, it would require oracle knowledge about $P$, or additional model 
assumptions, to use $\mathcal{Q}_P$ for actual estimation. 

Corollary~\ref{cor:Qefficiency} shows that $\mathcal{Q}$ is $\mathcal{P}$-ODS 
and establishes a potential efficiency gain for the model $\mathcal{P}$ 
by adjusting for $\mathcal{Q}$ instead of adjusting for $\bW$. If 
there exists a representation $\bZ = \phi(\bW)$ with $\sigma(\bZ) = \mathcal{Q}$, 
we have that 
$$
    \mathcal{P} \subseteq \mathcal{P}^{\textsc{ods}}
$$
where $\mathcal{P}^{\textsc{ods}}$ is defined by \eqref{eq:P-ods} in terms of $\bZ$. 
The efficiency bounds discussed in Section~\ref{sec:asymptoticefficiencybounds} show 
that in the larger model $\mathcal{P}^{\textsc{ods}}$, adjusting for 
$\mathcal{Q}$ gives an estimator that, indeed, attains the asymptotic efficiency 
bound.

Since $\mathbb{V}_t(\mathcal{Q};P) \leq \mathbb{V}_t(\bW;P)$ by 
Corollary~\ref{cor:Qefficiency} the obvious suggestion is to always adjust for 
$\mathcal{Q}$ instead of $\bW$. We never lose anything and we gain  
asymptotic efficiency if the inequality is sharp for some $P$. 
This might appear to contradict
that $\mathbb{V}_t(\bW;P)$ is the efficiency bound in an unrestricted model, 
but this is, of course, not the case. On the contrary, if 
$\mathbb{V}_t(\mathcal{Q};P) < \mathbb{V}_t(\bW;P)$ for some $P$ we can 
argue that the model $\mathcal{P}$ that induces the description $\mathcal{Q}$
must impose additional structural model assumptions. Thus a valid interpretation 
of our results is that they simply expose how a potential efficiency gain 
is encoded by additional model assumptions. 

However, if the model $\mathcal{P}$ imposes additional assumptions compared to 
$\mathcal{P}^{\textsc{ods}}$, it is quite possible that the
efficiency bound for $\mathcal{P}$ is strictly smaller than $\mathbb{V}_t(\mathcal{Q};P)$. 
Thus we do not 
claim that estimators with asymptotic variance $\mathbb{V}_t(\mathcal{Q};P)$ are
among the most efficient estimators for general models $\mathcal{P}$, and we regard it as an 
open problem to compute asymptotic efficiency bounds for general 
models $\mathcal{P}$ with a $\mathcal{P}$-ODS description $\mathcal{Z}$.  
For models with only $\mathcal{P}$-OMS or $\mathcal{P}$-valid
descriptions it is even more so unknown what the asymptotic efficiency bounds are.

\subsection{Proof of Theorem~\ref{thm:dqmpaths}} \label{sec:proofA1}
We may assume without loss of generality that $\mathbb{T}=\{0,1\}$ and that
$t=1$, which can be achieved by replacing $T$ with the variable $\one(T=t)$.
Then we may also assume that $\nu_{\mathbb{T}}$ is the counting measure on
$\{0,1\}$.

Fix $P_0 \in \mathcal{P}_{c,C}^{\textsc{ods}}$ and consider an arbitrary DQM
curve $\epsilon\mapsto P_\epsilon\in \mathcal{P}_{c,C}^{\textsc{ods}}$ through
$P_0$, and let 
$$
s_\epsilon = \left(\frac{\mathrm{d}P_\epsilon}{\mathrm{d}\nu}\right)^{\frac{1}{2}} \in L^2(\nu), 
\qquad \epsilon \in [0,1),
$$
denote its square root density. By definition of DQM, there exists a function
$\tau\in L^2(P_0)$ (which is equivalent to $\tau s_0 \in L^2(\nu)$) such that
\begin{equation}\label{eq:DQM}
    \int \left(\frac{s_\epsilon-s_0}{\epsilon} - \frac{1}{2} \tau s_0\right)^2 \mathrm{d}\nu 
        \longrightarrow 0.
\end{equation}
Note that $\tau$ is unique, and any measurable function $\tau$ satisfying the
above property will automatically satisfy $\ex_{P_0}[\tau(T,\bW,Y)^2]<\infty$
and $\ex_{P_0}[\tau(T,\bW,Y)]=0$. We refer to $\tau$ the tangent associated to
curve $\epsilon \mapsto P_\epsilon$.

Recall that for $P\in \mathcal{P}_{c,C}^{\textsc{ods}}$, the estimand $\mu_1$ is
given by
\begin{align*}
    \mu_1(P) 
    = \ex_{P}[\ex_{P}[Y\given T=1, \bW]]
    = \ex_{P}[\ex_{P}[Y\given T=1, \bZ]].
\end{align*}
It is convenient to define $Q_\epsilon(\bz) \coloneq \ex_{P_\epsilon}[Y\given
T=1,\bZ=\bz]$ and $\varpi_\epsilon(\bz) = \mathbb{P}_{P_\epsilon}(T=1\given
\bZ=\bz)$, so that $\mu_1(P_\epsilon) = \ex_{P_\epsilon}[Q_\epsilon(\bZ)]$.

The claim is that $\mu_1$ is pathwise differentiable at $P_0$ with respect to
all regular paths in $\mathcal{P}_{c,C}^{\textsc{ods}}$, and with influence
function
\begin{align*}
    \psi(t,\bz,y) =
    Q_0(\bz) + \underbrace{\frac{t(y-Q_0(\bz))}{\varpi_0(\bz)}}_{\eqcolon \zeta(t,\bz,y)} - \mu_1(P_0)
\end{align*}
From the definition of $\mathcal{P}_{c,C}^{\textsc{ods}}$ it follows that
$C_\psi \coloneq \|\psi\|_{L^\infty(\nu)}\leq 2C(1+c^{-1})<\infty$.

We employ the following observation, which is used explicitly in the proof of
Theorem 1 in \citet{young2024rose}, and is implicitly used in the proof of
Proposition A.5.2 in \citet{bickel1993efficient}:

If $(h_{\epsilon})_{\epsilon\in[0,1)}\subset L^2(\nu)$ satisfies that
$\limsup_{\epsilon\to 0} \|h_{\epsilon}\|_{L^2(\nu)}<\infty$, then the
Cauchy-Schwartz inequality implies that
\begin{align}\label{eq:RajenTrick}
    \left\lvert
    \int h_{\epsilon} \left(\frac{s_\epsilon-s_0}{\epsilon} - \frac{1}{2} \tau s_0\right)\mathrm{d}\nu
    \right\rvert
    \leq \|h_{\epsilon}\|_{L^2(\nu)} 
    \left\|\frac{s_\epsilon-s_0}{\epsilon} - \frac{1}{2} \tau s_0\right\|_{L^2(\nu)}  \longrightarrow 0.
\end{align}
We will apply \eqref{eq:RajenTrick} with
\begin{align*}
    h_{\epsilon} = \psi \cdot (s_\epsilon + s_0),
\end{align*}
which is possible because 
$
    \|h_{\epsilon}\|_{L^2(\nu)} \leq C_\psi \|s_\epsilon +s_0\| _{L^2(\nu)} \leq 2C_\psi
$.
Since 
$$
    |\inner{\psi \tau s_0}{s_\epsilon - s_0}_{L^2(\nu)}|
    \leq  C_\psi \|\tau\|_{L^2(P_0)} \|s_\epsilon - s_0\|_{L^2(\nu)} \to 0,
    \qquad \epsilon \to 0,
$$ 
it follows that, as $\epsilon \to 0$,
\begin{align*}
    R_\epsilon \coloneq \frac{1}{2}\int h_{\epsilon} \tau s_0 \mathrm{d}\nu
    = \frac{1}{2}\inner{\psi \tau s_0}{s_\epsilon + s_0}_{L^2(\nu)}
    \longrightarrow 
    \inner{\psi \tau s_0}{s_0}_{L^2(\nu)} = \inner{\psi}{\tau}_{L^2(P_0)}.
\end{align*}
We now turn our attention to
\begin{align*}
    L_\epsilon \coloneq \int \frac{h_{\epsilon}(s_\epsilon-s_0)}{\epsilon} \mathrm{d}\nu
    &= \frac{1}{\epsilon}\int \psi (s_\epsilon^2-s_0^2) \mathrm{d}\nu
    = \frac{1}{\epsilon} \int \psi \mathrm{d}P_\epsilon \\
    &= \frac{\ex_{P_\epsilon}[Q_0(\bZ) + \zeta(T,\bZ,Y)] - \mu_1(P_0)
    }{\epsilon}
\end{align*}
From iterated expectations it follows that
\begin{align*}
    \ex_{P_\epsilon} \left[
        \zeta(T,\bZ,Y) 
    \right]
    = \ex_{P_\epsilon} \left[
        \frac{T(Q_\epsilon(\bZ)-Q_0(\bZ))}{\varpi_0(\bZ)}
    \right]
    = \ex_{P_\epsilon} \left[
        \frac{\varpi_\epsilon(\bZ)(Q_\epsilon(\bZ)-Q_0(\bZ))}{\varpi_0(\bZ)}
    \right]
\end{align*}
and hence
\begin{equation}\label{eq:remainder}
  \begin{aligned}
    |\ex_{P_\epsilon}[Q_0(\bW) + \psi_2(T,\bZ,Y)] - \mu_1(P_\epsilon)|
    &\leq
    \Big|\Big\langle\frac{\varpi_\epsilon-\varpi_0}{\varpi_0}, 
        Q_\epsilon - Q_0\Big\rangle_{L^2(P_\epsilon)}\Big|
    \\
    &\leq c^{-1}
        \|\varpi_\epsilon-\varpi_0\|_{L^2(P_\epsilon)}
        \|Q_\epsilon-Q_0\|_{L^2(P_\epsilon)}
  \end{aligned}
\end{equation}
To continue, we will bound each of the factors in the upper bound separately
below.

Let $\wp_\epsilon$ denote the density of $\bW$ under $P_\epsilon$, and let
$\pi_\epsilon(\bw)=\mathbb{P}_{P_\epsilon}(T=1\given \bW=\bw)$. By the tower
property and the Doob-Dynkin lemma, we see that $\varpi_\epsilon(\bz)$ is a
version of $\ex_{P_\epsilon}[\pi_\epsilon(\bW)\given \bZ=\bz]$. From conditional
Jensens inequality, we then obtain that
$$
    \|\varpi_\epsilon-\varpi_0\|_{L^2(P_\epsilon)} 
    \leq \|\pi_\epsilon-\pi_0\|_{L^2(P_\epsilon)} 
$$
By applying difference of squares and the bound
$\sqrt{\pi_\epsilon}+\sqrt{\pi_0}\leq 2$, we see that
\begin{align*}
    (\pi_\epsilon-\pi_0)^2\wp_\epsilon \leq 
    4 (\sqrt{\pi_\epsilon\wp_\epsilon} - \sqrt{\pi_0\wp_\epsilon})^2,
\end{align*}
from which we obtain that
\begin{align*}
    \|\pi_\epsilon-\pi_0\|_{L^2(P_\epsilon)} 
    &\leq 2 \|\sqrt{\pi_\epsilon\wp_\epsilon} -
        \sqrt{\pi_0\wp_\epsilon}\|_{L^2(\nu)} \\
    &\leq 2 \|\sqrt{\pi_\epsilon\wp_\epsilon} -
        \sqrt{\pi_0\wp_0}\|_{L^2(\nu)} 
        +2 \|\sqrt{\pi_0\wp_\epsilon} -
        \sqrt{\pi_0\wp_0}\|_{L^2(\nu)}.
\end{align*}
Now since 
$$
    \pi_\epsilon(\bw)\wp_\epsilon(\bw)
    = \int s_\epsilon^2(1,\bw,y)\nu_{\real}(\mathrm{d}y)
$$
we obtain from the reverse triangle inequality for $L^2(\nu_\real)$ 
\begin{align*}
    \|\sqrt{\pi_\epsilon\wp_\epsilon} &-
        \sqrt{\pi_0\wp_0}\|_{L^2(\nu)}^2 \\
    &= \int \Big( \Big(\int s_\epsilon^2(1,\bw,y)\nu_{\real}(\mathrm{d}y)\Big)^{1/2}
    -\Big(\int s_0^2(1,\bw,y)\nu_{\real}(\mathrm{d}y)\Big)^{1/2}
    \Big)^2 \nu_{\mathbb{W}}(\mathrm{d}\bw)\\
    &\leq \int \int (s_\epsilon(1,\bw,y)-s_0(1,\bw,y))^2\nu_{\real}(\mathrm{d}y)
    \nu_{\mathbb{W}}(\mathrm{d}\bw) \\
    &\leq \|s_\epsilon - s_0\|_{L^2(\nu)}^2
\end{align*}
For the other term we similarly have that
\begin{equation}
  \label{eq:varpibound2}
  \begin{aligned}
    \|\sqrt{\pi_0\wp_\epsilon} -
        \sqrt{\pi_0\wp_0}\|_{L^2(\nu)}^2
    &\leq \|\sqrt{\wp_\epsilon} -\sqrt{\wp_0}\|_{L^2(\nu)}^2 \\
    &=  \int \Big(\|s_\epsilon(\cdot,\bw,\cdot)\|_{L^2(\nu_{\mathbb{T}}\otimes \nu_{\real})}- \|s_0(\cdot, \bw,\cdot)\|_{L^2(\nu_{\mathbb{T}}\otimes \nu_{\real})}\Big)^2 \nu_{\mathbb{W}}(\mathrm{d}\bw) \\
    &\leq \int \|s_\epsilon(\cdot,\bw,\cdot)- s_0(\cdot,\bw,\cdot)\|_{L^2(\nu_{\mathbb{T}}\otimes \nu_{\real})}^2 \nu_{\mathbb{W}}(\mathrm{d}\bw) \\
    &= \|s_\epsilon - s_0\|_{L^2(\nu)}^2.
    \end{aligned}
\end{equation}
Together the inequalities show that
\begin{equation}
  \label{eq:varpibound}
  \|\varpi_\epsilon-\varpi_0\|_{L^2(P_\epsilon)} \leq 4 \|s_\epsilon -
s_0\|_{L^2(\nu)}.
\end{equation}

We now turn to the second factor in the upper bound of \eqref{eq:remainder}.
Let $f_\epsilon(\cdot \given \bw)$ denote the conditional density for $Y\given
T=1,\bW=\bw$ under $P_\epsilon$, and let $g_\epsilon(\bw)
=\ex_{P_\epsilon}[Y\given T=1,\bW=\bw]$. Since $\bZ$ is $P_\epsilon$-ODS, we
first note that 
$$
\|Q_\epsilon-Q_0\|_{L^2(P_\epsilon)} = \|g_\epsilon-g_0\|_{L^2(P_\epsilon)}.
$$
Using that $\mathbb{P}_{P_\epsilon}(|Y|<C)=1$, we also note that
\begin{align*}
    |g_\epsilon(\bw) - g_0(\bw)|
    &= \left| \int y (f_\epsilon(y\given \bw) - f_0(y\given \bw)) 
    \nu_\real(\mathrm{d}y)\right| \\
    &\leq C \left\langle
        \big|\sqrt{f_\epsilon}(\cdot \given \bw)
            -\sqrt{f_0}(\cdot \given \bw)\big|,
        \sqrt{f_\epsilon(\cdot \given \bw)}
            +\sqrt{f_0(\cdot \given \bw)}
        \right\rangle_{L^2(\nu_\real)} \\
    &\leq 2C \|\sqrt{f_\epsilon}(\cdot \given \bw)
            -\sqrt{f_0}(\cdot \given \bw) \|_{L^2(\nu_\real)}.
\end{align*}
Combining the above we obtain
\begin{align*}
    \frac{1}{2C}\|Q_\epsilon-Q_0\|_{L^2(P_\epsilon)}
    &\leq \|\sqrt{f_\epsilon \wp_\epsilon} -\sqrt{f_0 \wp_\epsilon}\|_{L^2(\nu)} \\
    &= \| \pi_\epsilon^{-1/2}(s_\epsilon(1,\cdot,\cdot) - \sqrt{f_0\pi_\epsilon \wp_\epsilon})\|_{L^2(\nu)} \\
    &\leq c^{-1/2}\| s_\epsilon(1,\cdot,\cdot) - \sqrt{f_0\pi_\epsilon \wp_\epsilon}\|_{L^2(\nu)} \\
    &\leq c^{-1/2}\| s_\epsilon(1,\cdot,\cdot) - s_0(1,\cdot,\cdot)\|_{L^2(\nu)}
        + c^{-1/2}\| \sqrt{f_0}( \sqrt{\pi_0 \wp_0} - \sqrt{\pi_\epsilon \wp_\epsilon})\|_{L^2(\nu)}\\
    &\leq 2 c^{-1/2} \|s_\epsilon - s_0\|_{L^2(\nu)},
\end{align*}
where the last inequality uses that
$$
\|\sqrt{f_0}( \sqrt{\pi_0 \wp_0} - \sqrt{\pi_\epsilon \wp_\epsilon})\|_{L^2(\nu)} 
= \|\sqrt{\pi_0 \wp_0}-\sqrt{\pi_\epsilon \wp_\epsilon}\|_{L^2(\nu)} \leq 
\|s_\epsilon - s_0\|_{L^2(\nu)},
$$
and where this last inequality was established in \eqref{eq:varpibound2}. We
conclude that 
\begin{equation}
  \label{eq:Qbound}
  \|Q_\epsilon-Q_0\|_{L^2(P_\epsilon)} \leq 4 C c^{-1/2} \|s_\epsilon - s_0\|_{L^2(\nu)}.
\end{equation}

We now return to the upper bound in \eqref{eq:remainder} and combine it with
\eqref{eq:varpibound} and \eqref{eq:Qbound} to obtain that 
\begin{align*}
  |L_\epsilon- \frac{\mu_1(P_\epsilon)-\mu_1(P_0)}{\epsilon}| = O\left(  \frac{\|s_\epsilon - s_0\|_{L^2(\nu)}^2}{\epsilon}\right).
\end{align*}
From the QMD property of path $\epsilon\mapsto P_\epsilon$, an application of 
the triangle inequality yields that
\begin{align*}
    \limsup_{\epsilon\to 0} \frac{\|s_\epsilon - s_0\|_{L^2(\nu)}}{\epsilon}
    = \frac{1}{2}\|\tau s_0\|_{L^2(\nu)},
\end{align*}
whence 
\[
  \frac{\|s_\epsilon - s_0\|_{L^2(\nu)}^2}{\epsilon} = O(\epsilon) \to 0.
\]
Combined with \eqref{eq:RajenTrick}, we finally conclude that 
\begin{align*}
    \lim_{\epsilon\to 0}\frac{\mu_1(P_\epsilon)-\mu_1(P_0)}{\epsilon}
    = \lim_{\epsilon\to 0}L_\epsilon
    = \lim_{\epsilon\to 0}R_\epsilon
    = \inner{\psi}{\tau}_{L^2(P_0)}.
\end{align*}
As $\epsilon \mapsto P_\epsilon$ was an arbitrary QMD path, this shows that 
$\mu_1$ is pathwise differentiable with derivative 
$\inner{\psi}{\cdot}_{L^2(P_0)}$ along the maximal tangent set of $\mathcal{P}$. 

We now show that $\psi$ is the efficient influence function, which follows if we
can show that it can be realized as a tangent to a regular path. To this end, 
define first the auxiliary function $\kappa(t) = 2(1+e^{-2t})^{-1}$, which 
satisfies $\kappa(0) = \kappa'(0) = 1$ and $\kappa \leq 2$. Now define the path 
$\epsilon \mapsto P_\epsilon$ with density
\begin{align*}
    \frac{\mathrm{d}P_\epsilon}{\mathrm{d}\nu}(t,\bw,y)
    = s_\epsilon^2(t,\bw,y)
    = f_\epsilon(y \given t,\bw) \pi_0(\bw)^t(1-\pi_0(\bw))^{1-t} \wp_\epsilon(\bw)
\end{align*}
where, with $\bz = \phi(\bw)$,
\begin{align*}
    f_\epsilon(y \given t,\bw)
        \coloneq \frac{\kappa(\epsilon \zeta(t,\bz,y))f_0(y\given t,\bz)}{c_\epsilon^a (t,\bz)}, \quad
    c_\epsilon^a(t,\bz)
        \coloneq
        \int \kappa(\epsilon \zeta(t,\bz,y))f_0(y\given t,\bz) \nu_{\real}(\mathrm{d}y)
\end{align*}
and 
\begin{align*}
    \wp_\epsilon(\bw) &\coloneq \frac{\kappa(\epsilon (Q_0(\bz) - \mu_1(P_0)))
        \wp_0(\bw)}{
        c_\epsilon^b
        },
\end{align*}
where $c_\epsilon^b$ is the appropriate normalization constant. 
We verify that this indeed defines a regular parametric model. 
First note that $P_\epsilon \in \mathcal{P}_\nu$ by construction,
and in fact, $P_\epsilon \in \mathcal{P}_{c,C}^{\textsc{ods}}$ since
\begin{itemize}
    \item $\mathbb{P}_{P_\epsilon}(|Y|<C)=1$ follows from conditioning and using
    that $f_0 = 0 \implies f_\epsilon =0$.

    \item $\bZ$ is $P_\epsilon$-ODS since $f_\epsilon$ depends on $\bw$ only 
    via $\bz=\phi(\bw)$.

    \item The conditional distribution for $T\given \bW$ is the same for 
    $P_\epsilon$ and $P_0$.
\end{itemize}
By dominated convergence, we see that
\begin{align*}
    \frac{\mathrm{d}}{\mathrm{d}\epsilon}c_\epsilon^a(t,\bw)\vert_{\epsilon=0}
    = \ex_{P_0}[\psi_2(T,\bW,Y)\given T=t,\bW=\bw]
    = 0 
\end{align*}
and that
\begin{align*}
    \frac{\mathrm{d}}{\mathrm{d}\epsilon}c_\epsilon^b(t,\bw)\vert_{\epsilon=0}
    = \int (Q_0(\bw)-\mu_1(P))\wp_0(\bw) \nu_{\mathbb{W}}(\mathrm{d}\bw)
    = 0.
\end{align*}
From this it follows that the mapping $\epsilon \mapsto s_\epsilon$ is $C^1$, and that
\begin{align*}
    \frac{\mathrm{d}}{\mathrm{d}\epsilon} \log(s_\epsilon^2)\vert_{\epsilon=0}
    = \frac{\mathrm{d}}{\mathrm{d}\epsilon} \log(f_\epsilon) \vert_{\epsilon=0}
    + \frac{\mathrm{d}}{\mathrm{d}\epsilon} \log(\wp_\epsilon) \vert_{\epsilon=0} 
    = \zeta + Q_0 - \mu_1(P_0) = \tau.
\end{align*}
Finally, since the information is 
$\ex_{P_{\epsilon}}[\psi_\epsilon(T,W,Y)^2] \in (0,C_\psi)$, 
we conclude from Proposition~2.1 in \citet{bickel1993efficient} that 
$\epsilon \mapsto P_\epsilon$ is a regular path with $\tau$ as its tangent.
\hfill \qedsymbol{}

\subsection{Sketch proof of Theorem~\ref{thm:smoothpaths}} \label{sec:proofA2}
Just as in the proof of Theorem~\ref{thm:dqmpaths}, we assume without loss of 
generality that $\mathbb{T}=\{0,1\}$ and that $t=1$, which can be achieved by 
replacing $T$ with the variable $\one(T=t)$. 
For a generic $\bw\in\mathbb{W}$, we denote its image under $\phi$ with the 
implicit notation $\bz =\phi(\bw)$. Fix $P_0 \in \mathcal{P}$ and consider an 
arbitrary regular curve 
$$
    (-1,1) \ni \epsilon \mapsto P_\epsilon \in \mathcal{P}^\textsc{ods},
$$ 
and the corresponding curve of densities 
$p_\epsilon = \frac{\mathrm{d}P_\epsilon}{\mathrm{d}\nu}$. Since $\bZ=\phi(\bW)$
is $P_{\epsilon}\,$-ODS, the conditional density of $Y\given T=t,\bW=\bw$ is 
equal to the conditional density of $Y\given T=t,\bZ=\bz$ under $P_\epsilon$.
Hence the density can be written as
\begin{align}\label{eq:genericdensity}
    p_\epsilon(t,\bw,y) = 
    \big(f_\epsilon(y\given 1,\bz) \pi_\epsilon(\bw)\big)^t
    \big(f_\epsilon(y\given 0,\bz) (1-\pi_\epsilon(\bw))\big)^{1-t} 
    \wp_\epsilon(\bw)
\end{align}
where $f_\epsilon(\cdot\given t,\bz)$ denotes the conditional density of 
$Y\given T=t,\bZ=\bz$ under $P_\epsilon$, 
$\pi_\epsilon(\bw)=\mathbb{P}_{P_\epsilon}(T=1 \given \bW=\bw)$ 
denotes the conditional probability, and $\wp_\epsilon(\bw)$ the density of $\bW$ under $P_\epsilon$.

On $\{(t,\bw,y)\in \mathbb{T}\times \mathbb{W} \times \mathbb{R} 
\given p_0(t,\bw,y)>0\}$, the score of the generic curve 
\eqref{eq:genericdensity} at $P_0$ is given by
\begin{align}
    s(t,\bw,y) 
    &\coloneq \frac{\mathrm{d}}{\mathrm{d}\epsilon} \log p_\epsilon(t,\bw,y) \vert_{\epsilon=0} \nonumber \\
    &= t s^1(y\given \bz) + (1-t)s^0 (y\given \bz) 
        + \frac{(t-\pi_0(\bw))\dot{\pi}_0(\bw)}{\pi_0(\bw)(1-\pi_0(\bw))} 
        + \frac{\dot{\wp}_0(\bw)}{\wp_0(\bw)} \label{eq:genericscore}
\end{align}
where $s^t(y\given \bz)= \frac{\mathrm{d}}{\mathrm{d}\epsilon} 
\log f_\epsilon(y \given t,\bz) \vert_{\epsilon=0}$ for $t\in \{0,1\}$,
$\dot{\pi}_0(\bw) = \frac{\mathrm{d}}{\mathrm{d}\epsilon} \pi_\epsilon(\bw) \vert_{\epsilon=0}$, 
and $\dot{\wp}_0(\bw) = \frac{\mathrm{d}}{\mathrm{d}\epsilon} \wp_\epsilon(\bw) \vert_{\epsilon=0}$.

Let
$$
    \mathcal{U}\subset 
    L^1(\nu_{\real}\otimes \phi(\nu_{\mathbb{W}})) \times 
    L^1(\nu_{\real}\otimes \phi(\nu_{\mathbb{W}})) \times 
    L^1(\phi(\nu_{\mathbb{W}})) \times
    L^1(\phi(\nu_{\mathbb{W}})) 
$$
be the subset defined by the restriction that for each 
$(u_1, u_2, u_3, u_4)\in \mathcal{U}$ it holds that
$$
    \int u_1(y\given \bz) f_0(y\given 1,\bz)  \mathrm{d}\nu_{\real}(y)
    = \int u_2(y\given \bz) f_0(y\given 0,\bz) \mathrm{d}\nu_{\real}(y)
    =0
$$
for $\phi(\nu_{\mathbb{W}})$-almost all $\bz$, and 
$\int u_4(\bw) \wp_0(\bw) \mathrm{d}\nu_{\mathbb{W}}(\bw)=0$. 

We now argue that the tangent space of $\mathcal{P}^{\textsc{ods}}$ at 
$P_0$ is given by
\begin{align*}
    \dot{\mathcal{P}}^{\textsc{ods}}_{P_0} = \Big\{
        (t,\bw,y) \mapsto 
        t \cdot u_1(y\given \bz)
        + (1-t) u_2(y\given \bz)
        + u_3(\bw) (t-\pi_0(\bw)) 
        + u_4(\bw) \\
        \colon (u_1, u_2, u_3, u_4)\in \mathcal{U}
    \Big\}.
\end{align*}

To see that the generic score \eqref{eq:genericscore} satisfies these
restrictions, note that
\begin{align*}
    \int s^1(y\given \bz) f_0(y\given 1,\bz)  \mathrm{d}\nu_{\real}(y)
    = \int \frac{\mathrm{d}}{\mathrm{d}\epsilon} f_\epsilon(y\given 1,\bz)\vert_{\epsilon=0} \mathrm{d}\nu_{\real}(y)
    = 0,
\end{align*}
by interchanging derivative and expectation. The argument that
$\ex_{P_0}[\dot{\wp}(\bW)/\wp(\bW)]=0$ is similar.

Denote the outcome regression under $P_\epsilon$ conditionally on $A=1$ by
$$
    Q_\epsilon^1(w) = \int yf_\epsilon(y\given 1,w) \mathrm{d}y 
$$
and the adjusted mean on the curve $t\mapsto P_\epsilon$ is then
\begin{align*}
    \Psi(P_\epsilon) = \int Q_\epsilon^1(w)\wp_\epsilon(w)\mathrm{d}w 
        = \int \int yf_\epsilon(y\given 1,w) \mathrm{d}\nu(y) \wp_\epsilon(w) \mathrm{d}\nu(w).
\end{align*}
Differentiating we see that
\begin{align*}
    \frac{\mathrm{d}}{\mathrm{d}\epsilon}\Psi(P_\epsilon)\vert_{\epsilon=0}
    &= \int ys^1(y\given w) f_0(y\given 1,w)\wp_0(w) \mathrm{d}\nu(w,y)
        + \int Q_0^1(w) \dot{\wp}(w) \mathrm{d}\nu(w) \\
    &= \ex[\ex[Ys^1(Y\given W) \given A=1,W]] 
        + \ex[Q_0^1(W) \frac{\dot{\wp}(W)}{\wp(W)}]
\end{align*}
Define the function 
\begin{align*}
    \psi(a,w,y) = Q_0^1(w) + \frac{a(y-Q_0^1(w))}{\pi_0(w)} - \int Q_0^1(w) \wp_0(w)\mathrm{d}\nu(w)
\end{align*}
The last term is deterministic, and since $s(A,W,Y)$ is mean zero, it follows 
that
\begin{align*}
    \ex_{P_0}[\psi(A,W,Y)s(A,W,Y)]
    &= \ex_{P_0}[Q_0^1(W)s(A,W,Y)] 
        + \ex_{P_0}\left[\frac{A(Y-Q_0^1(W))}{\pi_0(W)}s(A,W,Y)\right] \\
    &\eqcolon T_1 + T_2
\end{align*}
For $T_1$, we note that 
$\ex[As^1(Y\given W) \given W] = \ex[(1-A)S^0(Y\given W) \given W] =0$, and hence
\begin{align*}
    T_1 = \ex_{P_0}[Q_0^1(W)s(A,W,Y)] 
    &= \ex[Q_0^1(W)\frac{\dot{\wp}(W)}{\wp(W)}]
\end{align*}
For the $T_2$, note that fraction has mean zero conditionally on $(A,W)$, and 
hence we have that
\begin{align*}
    T_2 = \ex\left[\frac{A(Y-Q_0^1(W))}{\pi_0(W)}s(A,W,Y)\right]  
    &= \ex\left[\frac{A(Y-Q_0^1(W))s^1(Y\given W)}{\pi_0(W)}\right] \\
    &= \ex\left[\ex\left[Ys^1(Y\given W) \given A=1,W\right]\right] 
\end{align*}
Combined we obtain that
\begin{align*}
    \ex_{P_0}[\psi(A,W,Y)s(A,W,Y)]
    = T_1 + T_2 =
    \frac{\mathrm{d}}{\mathrm{d}\epsilon}\Psi(P_\epsilon)\vert_{\epsilon=0}
\end{align*}
As the curve $t\mapsto P_\epsilon$ was generic, this shows that $\Psi$ is 
pathwise differentiable with gradient $\langle \psi, \cdot \rangle_{P_0}$ at 
$P_0$.


\section{Auxiliary results and proofs}\label{sup:proofs}
The proposition below relaxes the criteria for being a valid adjustment set 
in a causal graphical model. It follows from the results of 
\citet{perkovic2018complete}, which was pointed out to the authors by 
Leonard Henckel, and is stated here for completeness.
\begin{prop}\label{prop:adjset}
  Let $T,\bZ$, and $Y$ be pairwise disjoint node sets in a DAG 
  $\mathcal{D}=(\mathbf{V},\mathbf{E})$, and let $\mathcal{M}(\mathcal{D})$ 
  denote the collection of continuous distributions that are  Markovian with 
  respect to $\mathcal{D}$ and with $\ex|Y|<\infty$. Then $\bZ$ is an adjustment
   set relative to $(T,Y)$ if and only if for all 
   $P\in \mathcal{M}(\mathcal{D})$ and all $t$
  \begin{align*}
      \ex_P[Y\given \mathrm{do}(T = t)] 
      = \ex_P\left[
          \frac{\one(T=t)Y}{\mathbb{P}_P(T=t\given \mathrm{pa}_{\mathcal{D}}(T))}
          \right]
      = \ex_P[\ex_P[Y\given T=t, \bZ]]
  \end{align*}
\end{prop}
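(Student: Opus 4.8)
The plan is to separate the three-way identity into its two equalities. The first equality, $\ex_P[Y\given \mathrm{do}(T=t)] = \ex_P[\one(T=t)Y/\mathbb{P}_P(T=t\given \mathrm{pa}_{\mathcal{D}}(T))]$, holds for every $P\in\mathcal{M}(\mathcal{D})$ and every $t$ by the truncated factorization (g-formula): the parent set $\mathrm{pa}_{\mathcal{D}}(T)$ is always a back-door adjustment set, so inverse-probability weighting by the parent-propensity recovers the interventional mean. This reduces the proposition to the equivalence between ``$\bZ$ is an adjustment set relative to $(T,Y)$'' and the requirement that $\ex_P[\ex_P[Y\given T=t,\bZ]] = \ex_P[Y\given \mathrm{do}(T=t)]$ for all $P\in\mathcal{M}(\mathcal{D})$ and all $t$.

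For the equivalence I would use the complete characterization of \citet{perkovic2018complete}: in the framework recalled in the Example, $\bZ$ is an adjustment set relative to $(T,Y)$ if and only if the distributional adjustment formula
$$
  \mathbb{P}_P(Y\leq y\given \mathrm{do}(T=t)) = \ex_P[\mathbb{P}_P(Y\leq y\given T=t,\bZ)]
$$
holds for all $P\in\mathcal{M}(\mathcal{D})$, all $t$, and all $y\in\real$. The direction ``adjustment set $\Rightarrow$ mean identity'' is then immediate: both sides above are the distribution function of a (sub-)probability law in $y$, and writing the mean through the layer-cake formula $\ex[Y]=\int_0^\infty\mathbb{P}(Y>y)\,\mathrm{d}y - \int_{-\infty}^0\mathbb{P}(Y\leq y)\,\mathrm{d}y$ and interchanging with the outer expectation (Tonelli, using $\ex_P|Y|<\infty$) yields the mean identity.

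The substance is the converse: assuming the mean identity for \emph{all} $P\in\mathcal{M}(\mathcal{D})$, recover the distributional formula. The plan is a transformation argument. For any bounded, $C^1$, strictly increasing $\tau\colon\real\to\real$, the law $P^\tau$ of $(T,\bW,\tau(Y))$ again lies in $\mathcal{M}(\mathcal{D})$: relabeling the state space of the single node $Y$ by a measurable bijection preserves both the DAG factorization and continuity, and since $\sigma(\tau(Y))=\sigma(Y)$ the parent-propensity $\mathbb{P}(T=t\given \mathrm{pa}_{\mathcal{D}}(T))$ is unchanged. Because $\tau$ is bounded we have $\ex|\tau(Y)|<\infty$, so the assumed identity applies to $P^\tau$ and gives $\ex_P[\tau(Y)\given \mathrm{do}(T=t)] = \ex_P[\ex_P[\tau(Y)\given T=t,\bZ]]$. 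As $\tau$ ranges over this class -- which is measure-determining on $\real$ -- the two (sub-)probability measures $y\mapsto \mathbb{P}_P(Y\leq y\given \mathrm{do}(T=t))$ and $y\mapsto \ex_P[\mathbb{P}_P(Y\leq y\given T=t,\bZ)]$ have equal integrals against every test function and therefore coincide, which is exactly the distributional formula. By the characterization above, $\bZ$ is then an adjustment set.

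The main obstacle I anticipate is the rigorous verification that $P^\tau\in\mathcal{M}(\mathcal{D})$ -- in particular that a node-wise bijection preserves Markovianity and leaves the parent-propensity invariant even in edge cases where $Y$ is itself a parent of $T$ -- together with confirming that the chosen class of test functions is measure-determining. The remaining interchanges of integration are routine given the integrability assumption, so the proof effectively hinges on these two structural facts plus the black-box use of \citet{perkovic2018complete}.
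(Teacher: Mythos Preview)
Your approach is correct but differs from the paper's on the converse direction. The paper argues by contrapositive: if $\bZ$ is not an adjustment set, then Theorem~56 and the proof of Theorem~57 in \citet{perkovic2018complete} directly produce a \emph{Gaussian} $\tilde P\in\mathcal{M}(\mathcal{D})$ for which the mean adjustment identity itself fails, and the argument ends there. All the work is delegated to their completeness construction.

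Your transformation argument is more self-contained. Despite your phrasing, you do not actually use the completeness result of \citet{perkovic2018complete}; you only use the \emph{definition} of an adjustment set (the distributional formula) and show that the mean hypothesis, applied across the transformed family $\{P^\tau\}$, recovers it. The obstacles you flag are not serious: a node-wise $C^1$ bijection preserves all conditional independences and absolute continuity; the do-operator commutes with relabeling any node other than $T$, and even when $Y\in\mathrm{pa}_{\mathcal{D}}(T)$ the random variable $\mathbb{P}(T=t\mid\mathrm{pa}_{\mathcal{D}}(T))$ is unchanged since $\sigma(\tau(Y))=\sigma(Y)$; and bounded $C^1$ strictly increasing functions determine probability measures on $\real$ because smooth sigmoids approximate $\one(\cdot>a)$ pointwise. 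So your route works and is arguably more elementary, while the paper's is shorter by invoking a deeper black box.
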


\begin{proof}
The `only if' direction is straightforward if we use an alternative 
characterization of $\bZ$ being an adjustment set: for any density $p$ of a 
distribution $P\in \mathcal{P}$ it holds that
$$
    p(y\given \mathrm{do}(T=t)) 
    = \int p(\mathbf{y}|\bz,t) p(\bz)\mathrm{d}\bz,
$$
where the `do-operator' (and notation) is defined as in, e.g., 
\citet{peters2017elements}. Dominated convergence then yields 
\begin{align*}
    \ex_P[Y\given \mathrm{do}(T=t)] 
    &= \ex_P[\ex_P[Y\given T=t,\bZ]],
\end{align*}
which is equivalent to the `only if' part. On the contrary, assume that 
$\bZ$ is \emph{not} an adjustment set for $(T,Y)$. Then Theorem~56 and the 
proof of Theorem~57 in \citet{perkovic2018complete} imply the existence of 
a Gaussian distribution $\tilde{P}\in \mathcal{P}$ for
which 
\begin{align*}
    \ex_{\tilde{P}}[Y\given \mathrm{do}(T = 1)] \neq \ex_{\tilde{P}}[\ex_{\tilde{P}}[Y\given T=1, \bZ]]
\end{align*}
This implies the other direction.
\end{proof}

The following lemma is a generalization of Lemma 27 in 
\citet{rotnitzky2020efficient}.
\begin{lem}\label{lem:invpropexp}
    For any $\sigma$-algebras $\cZ_1\subseteq \cZ_2 \subseteq \sigma(\bW)$
    it holds, under Assumption~\ref{asm:positivity}, that
    $$
        \ex_P\left[
            \pi_t(\cZ_2; P)^{-1} \given T=t,\cZ_1
        \right]
         = \pi_t(\cZ_1; P)^{-1}
    $$
    for all $P\in\mathcal{P}$.
\end{lem}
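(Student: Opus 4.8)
The plan is to unfold the definition of conditioning on the event $(T=t)$ together with the $\sigma$-algebra $\cZ_1$ from Remark~\ref{rmk:conditional}, reducing the claim to a single application of the tower property. Writing $X \coloneq \pi_t(\cZ_2;P)^{-1}$, the convention there gives
$$
    \ex_P[X \given T=t,\cZ_1]
    = \frac{\ex_P[X\,\one(T=t)\given \cZ_1]}{\pi_t(\cZ_1;P)},
$$
so it suffices to show the numerator equals $1$ almost surely; dividing by $\pi_t(\cZ_1;P)$, which is strictly positive by Assumption~\ref{asm:positivity}, then yields the stated identity.

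First I would evaluate the numerator by the law of iterated expectations, using the inclusion $\cZ_1\subseteq \cZ_2$:
$$
    \ex_P[X\,\one(T=t)\given \cZ_1]
    = \ex_P\big[\ex_P[X\,\one(T=t)\given \cZ_2]\given \cZ_1\big].
$$
Since $X=\pi_t(\cZ_2;P)^{-1}$ is $\cZ_2$-measurable, I would pull it out of the inner conditional expectation and use $\ex_P[\one(T=t)\given \cZ_2]=\pi_t(\cZ_2;P)$ to obtain
$$
    \ex_P[X\,\one(T=t)\given \cZ_2]
    = \pi_t(\cZ_2;P)^{-1}\,\pi_t(\cZ_2;P) = 1.
$$
Substituting back, the outer conditional expectation of the constant $1$ is again $1$, which is exactly what is needed.

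The calculation is routine, so the only point requiring genuine care -- the main obstacle, such as it is -- is checking that every step is licensed measure-theoretically under Assumption~\ref{asm:positivity}. Positivity guarantees $0<\pi_t(\cZ_2;P)<1$ almost surely, so $X$ is a well-defined, almost surely finite, nonnegative random variable whose inner conditional expectation above exists (the integrand is nonnegative), and it guarantees $\pi_t(\cZ_1;P)>0$ so that the division defining conditioning on $(T=t,\cZ_1)$ is legitimate. The pull-out step needs only $\cZ_2$-measurability of $X$ together with nonnegativity, so no separate integrability hypothesis is required. Finally I would remark that this is the $\sigma$-algebraic analogue of Lemma~27 in \citet{rotnitzky2020efficient}, with the coordinate-subset manipulation there replaced by the tower property for the nested $\sigma$-algebras $\cZ_1\subseteq\cZ_2$.
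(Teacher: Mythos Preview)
Your proof is correct and follows essentially the same approach as the paper: both multiply through by $\pi_t(\cZ_1;P)$ to reduce to showing $\ex_P[\one(T=t)/\pi_t(\cZ_2;P)\given \cZ_1]=1$, then apply the tower property with $\cZ_1\subseteq\cZ_2$ and pull out the $\cZ_2$-measurable factor. The paper's writeup is terser, but the argument is identical.
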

\begin{proof}
    Direct computation yields
    \begin{align*}
        &\ex_P\left[
            \pi_t(\cZ_2; P)^{-1} \given T=t,\cZ_1
        \right] \mathbb{P}_P(T=t\given\cZ_1) \\
        &= 
        \ex_P\left[
            \frac{\one(T=t)}{\pi_t(\cZ_2; P)} \given \cZ_1
        \right] 
        =
        \ex_P\left[
            \frac{\ex_P[\one(T=t)\given\cZ_2]}{\pi_t(\cZ_2; P)} \given \cZ_1
        \right]
        = 1.
    \end{align*}
\end{proof}

\subsection{Proof of Lemma \ref{lem:overadj}}
Since $P\in \mathcal{P}$ is fixed, we suppress it from the notation in 
the following computations.

From $Y\ind \cZ_2 \given \cZ_1,T$ and $\cZ_1\subseteq \cZ_2$, it follows 
that $\ex[Y\given\cZ_1,T] = \ex[Y\given\cZ_2,T]$. Hence, for each 
$t\in \mathbb{T}$,
\begin{equation}\label{eq:overadjeq1}
    b_t(\cZ_1) = \ex[Y\given\cZ_1,T=t] 
    = \ex[Y\given\cZ_2,T=t]=b_t(\cZ_2).
\end{equation} 
Therefore $\mu_t(\cZ_1) = \ex[b_t(\cZ_1)] =
\ex[b_t(\cZ_2)] = \mu_t(\cZ_2)$. If $\mathcal{Z}_2$ is 
description of $\bW$, this identity shows that $\cZ_2$ is $\mathcal{P}$-valid 
if and only if $\cZ_1$ is $\mathcal{P}$-valid. 
To compare the asymptotic variances for $\cZ_2$ 
with $\cZ_1$ we use the law of total variance. Note first that
\begin{align*}
    \ex[\psi_{t}(\cZ_2)\given T,Y,\cZ_1] 
    &= \ex\Big[\one(T=t)(Y-b_t(\cZ_2))
        \frac{1}{\pi_t(\cZ_2)} + b_t(\cZ_2)-\mu_t \given T,Y,\cZ_1 
             \Big] \\
    &= \one(T=t)(Y-b_t(\cZ_1))\,
        \ex\Big[\frac{1}{\pi_t(\cZ_2)}\given T=t,Y,\cZ_1 \Big]
            + b_t(\cZ_1)-\mu_t \\
    &= \one(T=t)(Y-b_t(\cZ_1))
        \ex\left[\frac{1}{\pi_t(\cZ_2)}\given T=t,\cZ_1 \right]
            + b_t(\cZ_1)-\mu_t \\
    &= \one(T=t)(Y-b_t(\cZ_1))
        \frac{1}{\pi_t(\cZ_1)} + b_t(\cZ_1)-\mu_t
        = \psi_t(\cZ_1).
\end{align*}
Second equality is due to $\cZ_1$-measurability and \eqref{eq:overadjeq1}, 
whereas the third equality follows from $Y\ind \cZ_2 \given \cZ_1,T$ and the
last equality is due to Lemma~\ref{lem:invpropexp}. On the other hand,
\begin{align*}
    &\phantom{=}\ex\big[\var(\psi_{t}(\cZ_2)\given T,Y,\cZ_1)\big]\\
    &= \ex\Big[\var\Big(\one(T=t)(Y-b_t(\cZ_1))
        \frac{1}{\pi_t(\cZ_2)} + b_t(\cZ_1)-\mu_t\; \big| \; T,Y,\cZ_1 
             \Big)\Big] \\
    &= \ex\Big[\one(T=t)(Y-b_t(\cZ_1))^2\var\Big(
        \frac{1}{\pi_t(\cZ_2)} \; \big| \; T=t,Y,\cZ_1 
             \Big)\Big] \\
    &= \ex\Big[\pi_t(\cZ_1)\var(Y \given T=t,\cZ_1)\var\Big(
        \frac{1}{\pi_t(\cZ_2)} \; \big| \; T=t,\cZ_1 
             \Big)\Big].
\end{align*}
Combined we have that
\begin{align*}
    \bV_t(\cZ_2)
    &= \var(\psi_{t}(\cZ_2))
    = \var(\ex[\psi_{t}(\cZ_2) \given T,Y,\cZ_1]) 
        + \ex[\var[\psi_{t}(\cZ_2) \given T,Y,\cZ_1]] \\
    &= \bV_t(\cZ_1) 
        + \ex\Big[\pi_t(\cZ_1)\var(Y \given T=t,\cZ_1)\var\Big(
        \frac{1}{\pi_t(\cZ_2)}\Big| \; T=t,\cZ_1 
             \Big)\Big].
\end{align*}
To prove the last part of the lemma, we first note that $\psi_t(\cZ_2)$ and 
$\psi_{t'}(\cZ_2)$ are conditionally uncorrelated for $t\neq t'$. This follows 
from \eqref{eq:overadjeq1}, from which the conditional covariance reduces to
\begin{align*}
    \cov(\psi_t(\cZ_2), \psi_{t'}(\cZ_2) \given T,Y,\cZ_1)
    = \one(T=t)\one(T=t')
    \cov(
        *,* \given T,Y,\cZ_1
    )
    = 0.
\end{align*}
Thus, letting $\psi(\cZ_2) = (\psi_t(\cZ_2))_{t\in \mathbb{T}}$ and using the
computation for $\bV_t$, we obtain that:
\begin{align*}
    &\bV_\Delta(\cZ_2) = \var(\mathbf{c}^\top \psi(\cZ_2)) \\
    &= \var(\ex[\mathbf{c}^\top \psi(\cZ_2) \given T,Y,\cZ_1]) 
        +  \ex[\var[\mathbf{c}^\top \psi(\cZ_2) \given T,Y,\cZ_1]] \\
    &= \bV_\Delta(\cZ_1) 
        + \sum_{t\in \mathbb{T}} c_t^2 \, \ex\Big[\pi_t(\cZ_1)\var(Y \given T=t,\cZ_1)\var\Big(
        \frac{1}{\pi_t(\cZ_2)}\Big| \; T=t,\cZ_1 
             \Big)\Big].
\end{align*}
This concludes the proof. \hfill \qedsymbol

\subsection{Proof of Theorem \ref{thm:COSefficiency}}
(i) Assume that $\cZ\subseteq \sigma(\bW)$ is a description of $\bW$ such that
$\mathcal{Q}_P\subseteq \ol{\cZ}^P$. Observe that almost surely,
$$
    \mathbb{P}_P(Y\leq y \given T, \bW) 
    = \sum_{t\in \mathbb{T}} \one(T=t)F(y\mid t,\bW;P)
$$ 
is $\sigma(T,\mathcal{Q}_P)$-measurable and hence also 
$\sigma(T,\ol{\cZ}^P)$-measurable.
It follows that $\mathbb{P}_P(Y\leq y \given T, \bW)$ is also a version of
$\mathbb{P}_P(Y\leq y \given T, \overline{\cZ}^P)$. As a consequence, 
$\mathbb{P}_P(Y\leq y \given T, \bW) = \mathbb{P}_P(Y\leq y \given T, \cZ)$ 
almost surely. From Doob's characterization of conditional independence 
\citep[Theorem 8.9]{kallenberg2021foundations}, we conclude that 
$Y\indP \bW \mid T,\cZ$, or equivalently that $\cZ$ is $P$-ODS. 
The `in particular' follows from setting $\cZ=\mathcal{Q}_P$.

(ii) Assume that $\cZ \subseteq \sigma(\bW)$ is $P$-ODS. Using Doob's 
characterization of conditional independence again, 
$\mathbb{P}_P(Y\leq y \given T, \bW) = \mathbb{P}_P(Y\leq y \given T, \cZ)$
almost surely. Under Assumption~\ref{asm:positivity}, this entails that
$F(y\given T=t,\bW;P) = \mathbb{P}_P(Y\leq y \given T=t, \cZ)$ almost surely, 
and hence $F(y\given T=t,\bW;P)$ must be $\ol{\cZ}^P$-measurable. As the 
generators of $\mathcal{Q}_P$ are $\ol{\cZ}^P$-measurable, we conclude that 
$\mathcal{Q}_P \subseteq \ol{\cZ}^P$.

(iii) Let $\cZ$ be a $P$-ODS description. 
From (i) and (ii) it follows that $\mathcal{Q}_P\subseteq \ol{\cZ}^P$, and since 
$\cZ \subseteq \sigma(\bW)$ it 
also holds that $Y\indP \ol{\cZ}^P \mid T,\mathcal{Q}_P$. 
Since $\bV_\Delta(\cZ; P) = \bV_\Delta(\ol{\cZ}^P; P)$, Lemma~\ref{lem:overadj} 
gives the desired conclusion. 
\hfill \qed{}

\subsection{Proof of Corollary \ref{cor:Qefficiency}}
Theorem~\ref{thm:COSefficiency} (i,ii) implies that
$\cZ$ is $\mathcal{P}$-ODS if and only if~$\ol{\cZ}^P$ contains $\mathcal{Q}_P$
for all $P \in \mathcal{P}$.

If $\mathcal{Q} \subseteq \ol{\cZ}^P$ for all $P \in \mathcal{P}$ then 
$\mathcal{Q}_P \subseteq \ol{\cZ}^P$ for all $P \in \mathcal{P}$ by definition, 
and $\cZ$ is $\mathcal{P}$-ODS. Equation \eqref{eq:COAuniform} follows from the 
same argument as Theorem~\ref{thm:COSefficiency} $(iii)$. That is, 
$\mathcal{Q} \subseteq \ol{\cZ}^P$, and since $\mathcal{Q}$ is $P$-ODS and 
$\cZ \subseteq \sigma(\bW)$ we have $Y \indP \ol{\cZ}^P \mid T,\mathcal{Q}$. 
Lemma~\ref{lem:overadj} can be applied for each $P\in \mathcal{P}$ to obtain the
desired conclusion. 
\hfill \qed{}

\subsection{Proof of Proposition \ref{prop:COMSefficiency}}
Since $\mathcal{R}_P \subseteq \mathcal{Q}_P$ always holds, so it suffices to 
prove that $\mathcal{Q}_P \subseteq \mathcal{R}_P$. 
If $F(y\given t,\bW;P)$ is $\sigma(b_t(\bW;P))$-measurable, then 
$\mathcal{Q}_P \subseteq \mathcal{R}_P$ follows directly from definition. 
If $Y$ is a binary outcome, then the conditional distribution is determined by 
the conditional mean, i.e., $F(y\given t,\bW;P)$ is 
$\sigma(b_t(\bW;P))$-measurable. If $Y = b_T(\bW) + \varepsilon_Y$ with
$\varepsilon_Y \ind T, \bW$, then $F(y\given t,\bw;P) = F_P(y - b_t(\bw))$ where
$F_P$ is the distribution function for the distribution of $\varepsilon_Y$. 
Again, $F(y\given t,\bW;P)$ becomes $\sigma(b_t(\bW;P))$-measurable.

To prove the last part, let $\cZ$ be a $P$-OMS description. Since $b_t(\bW;P) =
b_t(\cZ;P)$ is $\cZ$-measurable for every $t\in \mathbb{T}$, it follows that
$\mathcal{R}_P\subseteq \ol{\cZ}^P$. Hence the argument of
Theorem~\ref{thm:COSefficiency} (iii) establishes
Equation~\eqref{eq:COApointwise} when $\mathcal{Q}_P = \mathcal{R}_P$. \hfill
\qed{}

\subsection{Proof of Lemma~\ref{lem:underadj}}

Since $P\in \mathcal{P}$ is fixed throughout this proof, we suppress it from
notation. From $\cZ_1\subseteq \cZ_2 \subseteq \sigma(\bW)$ and $T\indP \cZ_2
\given \cZ_1$, it follows that $\pi_t(\cZ_1) = \pi_t(\cZ_2)$, and hence 
$$
\mu_t(\cZ_1)
        =\ex[\pi_t(\cZ_1)^{-1}\one(T=t)Y]
        =\ex[\pi_t(\cZ_2)^{-1}\one(T=t)Y]
        = \mu_t(\cZ_2).
$$
This establishes the first part. For the second part, we use that $\pi_t(\cZ_1)
= \pi_t(\cZ_2)$ and $\mu_t(\cZ_1) =
\mu_t(\cZ_2)$ to see that
\begin{align*}
    \psi_t(\cZ_1) - \psi_t(\cZ_2)
        &= b_t(\cZ_1)-b_t(\cZ_2) 
        + \frac{\one(T=t)}{\pi_t(\cZ_2)}(b_t(\cZ_2)-b_t(\cZ_1)) \\
        &= \Big(\frac{\one(T=t)}{\pi_t(\cZ_2)}-1\Big)(b_t(\cZ_2)-b_t(\cZ_1))
        \eqcolon R_t(\cZ_1,\cZ_2) .
\end{align*}
Since $\ex[R_t(\cZ_1,\cZ_2)  \given \cZ_2]=0$ we have that
$\ex[\psi_t(\cZ_2)R_t(\cZ_1,\cZ_2) ]=0$, which means that $\psi_t(\cZ_2)$ and
$R_t(\cZ_1,\cZ_2) $ are uncorrelated. Note that from $\pi_t(\cZ_1) =
\pi_t(\cZ_2)$ it also follows that 
\begin{align*}
    b_t(\cZ_1)
    = \ex\Big[\frac{Y\one(T=t)}{\pi_t(\cZ_1)}\given \cZ_1\Big] 
    = \ex\Big[\ex\Big[\frac{Y\one(T=t)}{\pi_t(\cZ_2)}\given \cZ_2\Big]
        \given \cZ_1\Big]
    = \ex[b_t(\cZ_2) \given \cZ_1]
\end{align*}
This means that $\ex[(b_t(\cZ_2)-b_t(\cZ_1))^2\given \cZ_1]= \var(b_t(\cZ_2)\given \cZ_1)$. 
These observations let us compute that
\begin{align} \label{eq:varRtcomputation}
    \bV_t(\cZ_1)
    &= \var(\psi_t(\cZ_1)) \nonumber\\
    &= \var(\psi_t(\cZ_2)) 
        +\var\left(R_t(\cZ_1,\cZ_2) \right)  \nonumber\\
    &= \var(\psi_t(\cZ_2)) 
        +\ex[\ex[R_t(\cZ_1,\cZ_2)^2\given \cZ_2]]  \nonumber\\
    &= \bV_t(\cZ_2)
        + \ex\Big[
        (b_t(\cZ_2)-b_t(\cZ_1))^2
        \ex\Big[\frac{\one(T=t)^2}{\pi_t(\cZ_1)^2}
            -\frac{2\one(T=t)}{\pi_t(\cZ_1)}+1\given \cZ_2\Big]\Big] \nonumber\\
    &= \bV_t(\cZ_2)
        + \ex\Big[
        \var(b_t(\cZ_2)\given \cZ_1)
        \Big(\frac{1}{\pi_t(\cZ_1)}-1\Big)\Big].
\end{align}
This establishes the formula in the case $\Delta = \mu_t$. 

For general $\Delta$, note first that $\mathbf{R}$ and
$(\psi_t(\cZ_2))_{t\in\mathbb{T}}$ are uncorrelated since $\ex[\mathbf{R}\given
\cZ_2]=0$. Therefore we have
\begin{align*}
    \avar(\cZ_1) 
        &= \var\Big(\sum_{t\in\mathbb{T}}c_t\psi_t(\cZ_1)\Big) \\
        &= \var\Big(\sum_{t\in\mathbb{T}}c_t\psi_t(\cZ_2)\Big) +
        \var\Big(\sum_{t\in\mathbb{T}}c_t R_t(\cZ_1,\cZ_2)\Big) \\
        &= \avar(\cZ_2) + \mathbf{c}^\top \var\Big( \mathbf{R}\Big)\mathbf{c}.
\end{align*}
It now remains to establish the covariance expressions for the last term, since
the expression of $\var\left(R_t(\cZ_1,\cZ_2) \right)$ was found in
\eqref{eq:varRtcomputation}. To this end, note first that for any
$s,t\in\mathbb{T}$ with $s\neq t$,
\begin{align*}
    \ex\left[
        \Big(\frac{\one(T=s)}{\pi_s(\cZ_2)}-1\Big)\Big(\frac{\one(T=t)}{\pi_t(\cZ_2)}-1\Big)
        \given \cZ_2
    \right]
    = -1.
\end{align*}
Thus we finally conclude that
\begin{align*}
    \cov(R_s,R_t)
    &=
        -\ex\left[
            (b_s(\cZ_2)-b_s(\cZ_1))
            (b_t(\cZ_2)-b_t(\cZ_1))
        \right] \\
    &=  -\ex\left[
            \cov(b_s(\cZ_2),b_t(\cZ_2) \given \cZ_1)
        \right],
\end{align*}
as desired.
\hfill \qed{}

\subsection{Computations for Example \ref{ex:symmetric}}\label{sec:symmetriccomputations}
    Using symmetry, we see that
    \begin{align*}
        \pi_1(\bZ) &= \ex[T \mid \bZ ] = \ex[\bW \mid \bZ] \\
        &= 
            (0.5-\bZ)\mathbb{P}(\bW\leq 0.5\given \bZ)
             + (0.5+\bZ)\mathbb{P}(\bW > 0.5\given \bZ)\
        = \frac{1}{2}.
    \end{align*}
    From $T\ind \bZ$ we observe that
    \begin{align*}
        b_t(\bW) &= t + \ex[g(\bZ)\given  T=t,\bW] = t + g(\bZ), \\
        b_t(\bZ) &= t + \ex[g(\bZ)\given  T=t,\bZ] = t + g(\bZ), \\
        b_t(0) &= t + \ex[g(\bZ)\given T=t] = t + \ex[g(\bZ)].
    \end{align*}
    Plugging these expressions into the asymptotic variance yields:
    \begin{align*}
        \bV_t(0)  
        &= \var\left( b_t(0) + \frac{\one(T=t)}{\pi_t(0)}(Y-b_t(0))\right)\\
        &= \pi_t(0)^{-2}\var\left(\one(T=t)(Y-b_t(0))\right)\\
        &= 4\ex[\one(T=t)( g(\bZ) - \ex[ g(\bZ)]+v(\bW)\varepsilon_Y)^2] \\
        &= 4\ex[\bW( g(\bZ) - \ex[ g(\bZ)])^2]
            +2\ex[v(\bW)\varepsilon_Y^2] \\
        &= 2\var(g(\bZ)) + 2 \ex[v(\bW)^2]\ex[\varepsilon_Y^2] 
    \end{align*}
    \begin{align*}
        \bV_t(\bZ) 
        &= \var\left( b_t(\bZ) + \frac{\one(T=t)}{\pi_t(\bZ)}(Y-b_t(\bZ))\right) \\
        &= \var\left( g(\bZ) + 2\one(T=t)v(\bW)\varepsilon_Y\right) \\
        &= \var\left( g(\bZ)\right)  + 2\ex[v(\bW)^2] \ex[\varepsilon_Y^2] 
    \end{align*}
    \begin{align*}
        \bV_t(\bW) 
        &= \var\left( b_t(\bW) + \frac{\one(T=t)}{\pi_t(\bW)}(Y-b_t(\bW))\right) \\
        &= \var\left( g(\bZ) + \frac{\one(T=t)v(\bW)\varepsilon_Y}{\bW}\right) \\
        &= \var\left( g(\bZ)\right)  + \ex\left[\frac{v(\bW)^2}{\bW}\right] \ex[\varepsilon_Y^2].
    \end{align*}
\hfill \qed{}


\section{Asymptotic results}

To describe the asymptotics of the first term of \eqref{eq:biasvardecomp}, we
consider the decomposition
\begin{align}\label{eq:DOPEdecomp}
    \sqrt{|\mathcal{I}_3|}\widehat{\mu}_t^{\textsc{DOPE}}
    &= U_{\widehat{\phi}}^{(n)} + R_1 + R_2 + R_3,
\end{align}
where
\begin{align*}
    U_{\widehat{\phi}}^{(n)} &\coloneq \frac{1}{\sqrt{|\mathcal{I}_3|}}\sum_{i\in \mathcal{I}_3} u_{\widehat{\phi}, i}, 
    &u_{\widehat{\phi}, i} \coloneq g_{\widehat{\phi}}(t,\bW_i) + \frac{\one(T_i=t)(Y_i-g_{\widehat{\phi}}(t,\bW_i))}{m_{\widehat{\phi}}(t\given \bW_i)},\\
    R_1 &\coloneq \frac{1}{\sqrt{|\mathcal{I}_3|}}\sum_{i\in \mathcal{I}_3} r_i^1,
    &r_i^1 \coloneq 
        \left(\widehat{g}_{\widehat{\phi}}(t,\bW_i)-g_{\widehat{\phi}}(t,\bW_i)\right)
        \left(1-\frac{\one(T_i=t)}{m_{\widehat{\phi}}(t\given \bW_i)}\right), \\
    R_2 &\coloneq  \frac{1}{\sqrt{|\mathcal{I}_3|}}\sum_{i\in \mathcal{I}_3} r_i^2,
    &r_i^2 \coloneq  (Y_i-g_{\widehat{\phi}}(t,\bW_i))\left(
            \frac{\one(T_i=t)}{\widehat{m}_{\widehat{\phi}}(t\given \bW_i)}
            -\frac{\one(T_i=t)}{m_{\widehat{\phi}}(t\given \bW_i)}\right), \\
    R_3 &\coloneq  \frac{1}{\sqrt{|\mathcal{I}_3|}}\sum_{i\in \mathcal{I}_3} r_i^3,
    &r_i^3 \coloneq  
        \big(\widehat{g}_{\widehat{\phi}}(t,\bW_i)
            -g_{\widehat{\phi}}(t,\bW_i)\big)\left(
        \frac{\one(T_i=t)}{\widehat{m}_{\widehat{\phi}}(t\given \bW_i)}
            -\frac{\one(T_i=t)}{m_{\widehat{\phi}}(t\given \bW_i)}\right).
\end{align*}
We define the analogous quantity $U_{\phi}^{(n)}$ by replacing $\widehat{\phi}$ with
a fixed $\phi$ in the definition of $U_{\widehat{\phi}}^{(n)}$. In what follows, we
show that the oracle term, $U_{\widehat{\phi}}^{(n)}$, drives the asymptotic limit,
and that the terms $R_1,R_2,R_3$ are remainder terms, subject to the conditions
stated below.

\subsection{Proof of Theorem \ref{thm:conditionalConvMain}}

The theorem follows from the following results:
\begin{thm}\label{thm:conditionalAIPWconv}
    Under Assumptions \ref{asm:samples} and \ref{asm:AIPWconv}, it holds that
    \begin{align*}
        \mathbb{V}_t(\widehat{\bZ})^{-1/2}
        \cdot
        U_{\widehat{\phi}}^{(n)} \xrightarrow{d} \mathrm{N}(0,1),
        \qquad \text{and} \qquad 
        R_i \xrightarrow{P} 0, \quad i=1,2,3,
    \end{align*}
    as $n\to \infty$.
\end{thm}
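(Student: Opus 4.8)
The plan is to exploit the sample splitting in Assumption~\ref{asm:samples}. Let $\mathcal{G}_n \coloneq \sigma\big((T_i,\bW_i,Y_i) : i \in \mathcal{I}_1\cup\mathcal{I}_2\big)$ be the $\sigma$-algebra generated by the first two folds. Conditionally on $\mathcal{G}_n$ the estimated representation $\widehat{\phi}$ and all four functions $\widehat{m}_{\widehat{\phi}}, \widehat{g}_{\widehat{\phi}}, m_{\widehat{\phi}}, g_{\widehat{\phi}}$ are fixed, while the observations indexed by $\mathcal{I}_3$ remain i.i.d.\ with law $P$; hence every summand in \eqref{eq:DOPEdecomp} becomes a conditionally i.i.d.\ average, which I would analyse given $\mathcal{G}_n$ and then lift to unconditional statements. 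The two facts that drive the argument are the oracle identities: because $m_{\widehat{\phi}}$ and $g_{\widehat{\phi}}$ are the \emph{true} propensity and regression for $\widehat{\bZ} = \widehat{\phi}(\bW)$, conditioning on $(\widehat{\bZ}_i,\mathcal{G}_n)$ yields $\ex\big[1 - \one(T_i=t)/m_{\widehat{\phi}}(t\given\bW_i) \given \widehat{\bZ}_i,\mathcal{G}_n\big] = 0$ and $\ex\big[\one(T_i=t)(Y_i - g_{\widehat{\phi}}(t,\bW_i)) \given \widehat{\bZ}_i,\mathcal{G}_n\big] = 0$.

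For the oracle term note that $u_{\widehat{\phi},i} = \psi_t(\widehat{\bZ}_i;P) + \mu_t(\widehat{\bZ})$, so conditionally on $\mathcal{G}_n$ the summands are i.i.d.\ with mean $\mu_t(\widehat{\bZ})$ and variance $\bV_t(\widehat{\bZ})$. I would therefore centre by the conditional mean $\sqrt{|\mathcal{I}_3|}\,\mu_t(\widehat{\bZ})$ (the centering that feeds the decomposition of $\widehat{\mu}_t^{\textsc{DOPE}} - \mu_t(\widehat{\bZ})$ in Theorem~\ref{thm:conditionalConvMain}), so that $\bV_t(\widehat{\bZ})^{-1/2}\big(U_{\widehat{\phi}}^{(n)} - \sqrt{|\mathcal{I}_3|}\,\mu_t(\widehat{\bZ})\big)$ is a standardised conditionally i.i.d.\ sum. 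I would then establish a conditional central limit theorem through characteristic functions, using the elementary lift: if the conditional characteristic function converges to $e^{-s^2/2}$ in probability for every $s$, then bounded convergence gives unconditional convergence in distribution. The conditional Lyapunov bound reduces to showing $\ex[|\psi_t(\widehat{\bZ}_1;P)|^{2+\delta}\given\mathcal{G}_n] \big/ \big(|\mathcal{I}_3|^{\delta/2}\bV_t(\widehat{\bZ})^{1+\delta/2}\big) \xrightarrow{P} 0$, where the numerator is uniformly bounded by Assumption~\ref{asm:AIPWconv}(i) and (iii) (strict overlap caps the inverse weights at $1/c$ and $\ex[|Y|^{2+\delta}]<\infty$) and the denominator diverges because $\bV_t(\widehat{\bZ})$ stays bounded away from zero.

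For the remainders, $R_1$ and $R_2$ are centred by the oracle identities, which give $\ex[r_i^1\given\mathcal{G}_n] = \ex[r_i^2\given\mathcal{G}_n] = 0$, so conditional Chebyshev suffices. I would bound $\var(R_1\given\mathcal{G}_n) = \ex[(r_1^1)^2\given\mathcal{G}_n] \le c^{-1}\,\ex[(\widehat{g}_{\widehat{\phi}}(t,\bW)-g_{\widehat{\phi}}(t,\bW))^2\given\mathcal{G}_n]$ using overlap, and likewise $\var(R_2\given\mathcal{G}_n) \le C c^{-4}\,\ex[(\widehat{m}_{\widehat{\phi}}(t\given\bW)-m_{\widehat{\phi}}(t\given\bW))^2\given\mathcal{G}_n]$ using the conditional second-moment bound (ii); both conditional mean-squared errors vanish by Assumption~\ref{asm:AIPWconv}(iv) and (v), after passing from the empirical averages $\mathcal{E}_{1,t}^{(n)}, \mathcal{E}_{2,t}^{(n)}$ to their conditional means. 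The product term $R_3$ requires no orthogonality: Cauchy--Schwarz together with overlap gives $|R_3| \le c^{-2}\sqrt{|\mathcal{I}_3|\,\mathcal{E}_{1,t}^{(n)}\mathcal{E}_{2,t}^{(n)}}$, which tends to zero by Assumption~\ref{asm:AIPWconv}(vi).

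The main obstacle is the oracle limit theorem with the \emph{random} normalisation $\bV_t(\widehat{\bZ})$: one cannot simply invoke a fixed-law CLT but must run a genuinely conditional argument, verify convergence of the conditional characteristic functions in probability, and secure the lower bound on $\bV_t(\widehat{\bZ})$ that the Lindeberg condition needs. A secondary nuisance is that Assumption~\ref{asm:AIPWconv} is phrased through the empirical errors $\mathcal{E}_{1,t}^{(n)},\mathcal{E}_{2,t}^{(n)}$, whereas the conditional-variance bounds for $R_1$ and $R_2$ are most naturally written through the corresponding conditional expectations; bridging the two via the tower property and a uniform-integrability (or conditional Markov) argument is the fiddly step of the remainder analysis.
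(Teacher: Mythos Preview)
Your decomposition, the Cauchy--Schwarz bound for $R_3$, and the conditional-CLT strategy for the oracle term are all sound and close to the paper's argument (the paper uses a uniform Lindeberg--Feller CLT over $\phi\in\mathcal{F}$, citing Lemma~18 of \citet{shah2020hardness}, and then substitutes $\widehat{\phi}$; your characteristic-function route is an equivalent alternative).

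The genuine gap is the ``fiddly step'' you flag for $R_1$. Conditioning only on $\mathcal{G}_n$ gives
\[
\var(R_1\mid\mathcal{G}_n)\ \lesssim\ \ex\big[(\widehat{g}_{\widehat{\phi}}(t,\bW)-g_{\widehat{\phi}}(t,\bW))^2\,\big|\,\mathcal{G}_n\big],
\]
a \emph{conditional expectation}, whereas Assumption~\ref{asm:AIPWconv}(v) only controls the \emph{empirical} average $\mathcal{E}_{2,t}^{(n)}$. Going from $\mathcal{E}_{2,t}^{(n)}\xrightarrow{P}0$ to convergence of its conditional mean requires uniform integrability of $(\widehat{g}_{\widehat{\phi}}-g_{\widehat{\phi}})^2$, which is nowhere assumed: $|g_{\widehat{\phi}}|\le\sqrt{C}$ by~(ii) and Jensen, but the estimator $\widehat{g}_{\widehat{\phi}}$ carries no such bound. (For $R_2$ your bridge does work, since Assumption~\ref{asm:AIPWconv}(i) uniformly bounds $(\widehat{m}_{\widehat{\phi}}-m_{\widehat{\phi}})^2$.)

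The paper's fix is to condition on \emph{more}: for $R_1$ on $\big((\widehat{\bZ}_j)_{j\in\mathcal{I}_3},\widehat{\phi}\big)$, and for $R_2$ additionally on $(T_j)_{j\in\mathcal{I}_3}$. Under this enlarged $\sigma$-algebra the differences $\widehat{g}_{\widehat{\phi}}(t,\bW_i)-g_{\widehat{\phi}}(t,\bW_i)$ are measurable (they are functions of $\widehat{\bZ}_i$), the summands stay conditionally independent and mean zero, and the conditional variance of $R_1$ is bounded \emph{directly} by $\tfrac{1-c}{c}\,\mathcal{E}_{2,t}^{(n)}$---exactly the empirical quantity in the assumption, with no integrability bridge needed. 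Conditional Chebyshev then yields $\mathbb{P}(|R_1|>\epsilon\mid\cdot)\le\tfrac{1-c}{\epsilon^2 c}\,\mathcal{E}_{2,t}^{(n)}\xrightarrow{P}0$, and bounded convergence finishes.
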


\begin{thm}\label{thm:varconsistent}
    Under Assumptions \ref{asm:samples} and \ref{asm:AIPWconv}, it holds that
    \begin{align*}
    \widehat{\mathcal{V}}_t
    -\mathbb{V}_t(\widehat{\bZ}) 
    \xrightarrow{P} 0
    \end{align*}
    as $n\to \infty$.
\end{thm}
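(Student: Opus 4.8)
The plan is to condition on the estimation data, reduce the claim to a conditional law of large numbers for an \emph{oracle} version of the summands, and separately control the error incurred by replacing the estimated nuisances with their population counterparts. Throughout, condition on $\mathcal{G}\coloneq\sigma\big((T_i,\bW_i,Y_i)_{i\in\mathcal{I}_1\cup\mathcal{I}_2}\big)$; by Assumption~\ref{asm:samples} the index set $\mathcal{I}_3$ is disjoint from $\mathcal{I}_1\cup\mathcal{I}_2$, so given $\mathcal{G}$ the quantities $\widehat{\phi},\widehat{m}_{\widehat{\phi}},\widehat{g}_{\widehat{\phi}}$ are frozen while $(T_i,\bW_i,Y_i)_{i\in\mathcal{I}_3}$ are fresh i.i.d.\ copies of $(T,\bW,Y)$. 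Writing $u_{\widehat{\phi},i}$ for the oracle summand from \eqref{eq:DOPEdecomp}, one checks directly that $u_{\widehat{\phi},i}=\psi_t(\widehat{\bZ})+\mu_t(\widehat{\bZ})$ at observation $i$, since $g_{\widehat{\phi}}(t,\bW)=b_t(\widehat{\bZ})$ and $m_{\widehat{\phi}}(t\given\bW)=\pi_t(\widehat{\bZ})$; hence $\ex[u_{\widehat{\phi},1}\mid\mathcal{G}]=\mu_t(\widehat{\bZ})$ and $\var(u_{\widehat{\phi},1}\mid\mathcal{G})=\bV_t(\widehat{\bZ})$. As $\bV_t(\widehat{\bZ})=\ex[u_{\widehat{\phi},1}^2\mid\mathcal{G}]-\mu_t(\widehat{\bZ})^2$, it suffices to prove
\begin{align*}
  \frac{1}{|\mathcal{I}_3|}\sum_{i\in\mathcal{I}_3}\widehat{u}_i^2-\ex[u_{\widehat{\phi},1}^2\mid\mathcal{G}]\xrightarrow{P}0
  \qquad\text{and}\qquad
  \frac{1}{|\mathcal{I}_3|}\sum_{i\in\mathcal{I}_3}\widehat{u}_i-\mu_t(\widehat{\bZ})\xrightarrow{P}0 .
\end{align*}
The second convergence is exactly $\widehat{\mu}_t^{\textsc{DOPE}}-\mu_t(\widehat{\bZ})\xrightarrow{P}0$ from Theorem~\ref{thm:conditionalAIPWconv}, and since $\mu_t(\widehat{\bZ})$ is bounded it upgrades to $\big(\tfrac{1}{|\mathcal{I}_3|}\sum_i\widehat{u}_i\big)^2-\mu_t(\widehat{\bZ})^2\xrightarrow{P}0$.

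For the first convergence I would split it as $\tfrac{1}{|\mathcal{I}_3|}\sum_i(\widehat{u}_i^2-u_{\widehat{\phi},i}^2)$ plus $\tfrac{1}{|\mathcal{I}_3|}\sum_i u_{\widehat{\phi},i}^2-\ex[u_{\widehat{\phi},1}^2\mid\mathcal{G}]$. For the first bracket, write $\widehat{u}_i^2-u_{\widehat{\phi},i}^2=(\widehat{u}_i-u_{\widehat{\phi},i})(\widehat{u}_i+u_{\widehat{\phi},i})$ and apply Cauchy--Schwarz, so it is enough to show $\tfrac{1}{|\mathcal{I}_3|}\sum_i(\widehat{u}_i-u_{\widehat{\phi},i})^2\xrightarrow{P}0$ while $\tfrac{1}{|\mathcal{I}_3|}\sum_i(\widehat{u}_i+u_{\widehat{\phi},i})^2=O_P(1)$. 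Using $\widehat{u}_i-u_{\widehat{\phi},i}=r_i^1+r_i^2+r_i^3$ from \eqref{eq:DOPEdecomp} and $(a+b+c)^2\le 3(a^2+b^2+c^2)$, the strict-overlap bound of Assumption~\ref{asm:AIPWconv}(i) controls the $r^1$ and $r^3$ contributions pointwise by a constant times $(\widehat{g}_{\widehat{\phi}}-g_{\widehat{\phi}})^2$, whose average is at most a constant times $\mathcal{E}_{2,t}^{(n)}\xrightarrow{P}0$ by Assumption~\ref{asm:AIPWconv}(v).

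The delicate term is $r_i^2$, whose factor $Y_i-g_{\widehat{\phi}}(t,\bW_i)$ is unbounded. Here I would take conditional expectations rather than bound pathwise: using $\ex[Y^2\mid\bW,T]\le C$ (Assumption~\ref{asm:AIPWconv}(ii)), $|g_{\widehat{\phi}}|\le\sqrt{C}$, and strict overlap, one gets $\ex\big[\tfrac{1}{|\mathcal{I}_3|}\sum_i(r_i^2)^2\mid\mathcal{G}\big]\le K\,\ex[\mathcal{E}_{1,t}^{(n)}\mid\mathcal{G}]$ for a constant $K=K(C,c)$, because the $\mathcal{I}_3$-average of $(\widehat{m}_{\widehat{\phi}}-m_{\widehat{\phi}})^2$ defining $\mathcal{E}_{1,t}^{(n)}$ has conditional mean $\ex[\mathcal{E}_{1,t}^{(n)}\mid\mathcal{G}]$. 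Since $(\widehat{m}_{\widehat{\phi}}-m_{\widehat{\phi}})^2\le1$, the sequence $\mathcal{E}_{1,t}^{(n)}$ is bounded, so Assumption~\ref{asm:AIPWconv}(iv) upgrades by bounded convergence to $\ex[\mathcal{E}_{1,t}^{(n)}]\to0$; conditional Markov then yields $\tfrac{1}{|\mathcal{I}_3|}\sum_i(r_i^2)^2\xrightarrow{P}0$. Combining the three, $\tfrac{1}{|\mathcal{I}_3|}\sum_i(\widehat{u}_i-u_{\widehat{\phi},i})^2\xrightarrow{P}0$; the $O_P(1)$ bound on $\tfrac{1}{|\mathcal{I}_3|}\sum_i(\widehat{u}_i+u_{\widehat{\phi},i})^2$ then follows from $\widehat{u}_i^2\le 2(\widehat{u}_i-u_{\widehat{\phi},i})^2+2u_{\widehat{\phi},i}^2$ together with the next step.

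The main obstacle is the second bracket, a conditional weak law for $\tfrac{1}{|\mathcal{I}_3|}\sum_i u_{\widehat{\phi},i}^2$ whose summands depend on the random representation $\widehat{\phi}$ and for which only $2+\delta$ moments are available, so a naive variance/Chebyshev argument (which needs a fourth moment) is unavailable. Given $\mathcal{G}$ the $u_{\widehat{\phi},i}^2$ are i.i.d., and from $|u_{\widehat{\phi},i}|\le|g_{\widehat{\phi}}|+c^{-1}|Y_i-g_{\widehat{\phi}}|$ with Assumption~\ref{asm:AIPWconv}(iii) one obtains a uniform bound $\ex[|u_{\widehat{\phi},1}|^{2+\delta}\mid\mathcal{G}]\le M$ (deterministic, as the only $\widehat{\phi}$-dependence is through the uniformly bounded $g_{\widehat{\phi}}$ and $m_{\widehat{\phi}}\ge c$). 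I would then invoke a von Bahr--Esseen / Marcinkiewicz--Zygmund moment inequality with $p\coloneq\min(1+\delta/2,2)\in(1,2]$, giving the deterministic rate $\ex\big[\big|\tfrac{1}{|\mathcal{I}_3|}\sum_i u_{\widehat{\phi},i}^2-\ex[u_{\widehat{\phi},1}^2\mid\mathcal{G}]\big|^{p}\mid\mathcal{G}\big]\le C_p\,M'\,|\mathcal{I}_3|^{1-p}\to0$ uniformly in $\mathcal{G}$; conditional Markov and taking expectations lifts this to the required unconditional convergence. Assembling the two brackets gives the first displayed limit, and together with the second it establishes $\widehat{\mathcal{V}}_t-\bV_t(\widehat{\bZ})\xrightarrow{P}0$.
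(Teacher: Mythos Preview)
Your argument is correct and follows essentially the same route as the paper's proof: decompose $\widehat{u}_i=u_{\widehat{\phi},i}+r_i^1+r_i^2+r_i^3$, show $\tfrac{1}{|\mathcal{I}_3|}\sum_i(r_i^1+r_i^2+r_i^3)^2\xrightarrow{P}0$ term by term, handle the cross term via Cauchy--Schwarz, and appeal to a uniform weak law for the oracle squares. The only cosmetic differences are that the paper conditions on $(\widehat{\bZ}_j,T_j)_{j\in\mathcal{I}_3},\widehat{\phi}$ for the $(r_i^2)^2$ term (avoiding your $L^1$-upgrade of $\mathcal{E}_{1,t}^{(n)}$ via bounded convergence, though both are valid), and it obtains the oracle LLN by citing the uniform WLLN of \citet[Lem.~19]{shah2020hardness} over $\phi\in\mathcal{F}$ rather than invoking von Bahr--Esseen/Marcinkiewicz--Zygmund directly; these are the same argument under different packaging.
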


\subsection*{Proof of Theorem \ref{thm:conditionalAIPWconv}}    
We first prove that $R_i \xrightarrow{P} 0$ for $i=1,2,3$. The third term is
taken care of by combining Assumption \ref{asm:AIPWconv} (i,iv) with
Cauchy-Schwarz:
\begin{align*}
    |R_3| 
    &\leq \sqrt{|\mathcal{I}_3|} 
        \sqrt{\frac{1}{|\mathcal{I}_3|} \sum_{i\in \mathcal{I}_3} 
        \big(\widehat{m}_{\widehat{\phi}}(t\given \bW_i)^{-1} 
            - m_{\widehat{\phi}}(t\given \bW_i)^{-1}\big)^2}
        \sqrt{\mathcal{E}_{2,t}^{(n)}} \\
    &\leq \ell_c \sqrt{|\mathcal{I}_3| 
        \mathcal{E}_{1,t}^{(n)}\mathcal{E}_{2,t}^{(n)}} \xrightarrow{P} 0,
\end{align*}
where $\ell_c>0$ is the Lipschitz constant of $x \mapsto x^{-1}$ 
on $[c,1-c]$. 

For the first term, $R_1$, note that conditionally on $\widehat{\phi}$ and the
observed estimated representations $(\widehat{\bZ}_i)_{i\in\mathcal{I}_3}$, the
summands are conditionally i.i.d. due to Assumption~\ref{asm:samples}. They are
also conditionally mean zero since
\begin{align*}
    \ex[r_i^1 \given (\widehat{\bZ}_j)_{j\in\mathcal{I}_3}, \widehat{\phi}\,]
    &= 
    \left(\widehat{g}_{\widehat{\phi}}(t,\bW_i)-g_{\widehat{\phi}}(t,\bW_i)\right)
        \Big(1-\frac{\ex[\one(T_i=t)\given (\widehat{\bZ}_j)_{j \in\mathcal{I}_3}, \widehat{\phi}\,]}{m_{\widehat{\phi}}(t \given \bW_i)}\Big) \\
    &= 
    \left(\widehat{g}_{\widehat{\phi}}(t,\bW_i)-g_{\widehat{\phi}}(t,\bW_i)\right)
        \Big(1-\frac{\ex[\one(T_i=t)\given\widehat{\bZ}_i, \widehat{\phi}]}{m_{\widehat{\phi}}(t \given \bW_i)}\Big)
     = 0,
\end{align*}
for each $i\in \mathcal{I}_3$.
Using Assumption \ref{asm:AIPWconv} (i), we can bound the conditional variance by
\begin{align*}
    \var\left(r_i^1
        \given
        (\widehat{\bZ}_j)_{j\in\mathcal{I}_3}, \widehat{\phi} \,
        \right) 
    &=\frac{\left(
        \widehat{g}_{\widehat{\phi}}(t,\bW_i)-g_{\widehat{\phi}}(t,\bW_i)\right)^2
        }{m_{\widehat{\phi}}(t \given \bW_i)^2} 
        \var\left(
        \one(T=t)
        \given
        (\widehat{\bZ}_j)_{j\in\mathcal{I}_3}, \widehat{\phi}
        \right) \\
    &=\frac{\left(
        \widehat{g}_{\widehat{\phi}}(t,\bW_i)-g_{\widehat{\phi}}(t,\bW_i)\right)^2
        }{m_{\widehat{\phi}}(t \given \bW_i)} 
        (1-m_{\widehat{\phi}}(t \given \bW_i)) \\
    &\leq \frac{(1-c)}{c}
    \left(\widehat{g}_{\widehat{\phi}}(t,\bW_i)-g_{\widehat{\phi}}(t,\bW_i)\right)^2
\end{align*}
Now it follows from Assumption~\ref{asm:AIPWconv}~(v) and
the conditional Chebyshev inequality that for any $\epsilon > 0$, we have
\begin{align*}
    \mathbb{P}\left(|R_1| >\epsilon\given  (\widehat{\bZ}_j)_{j\in\mathcal{I}_3}, \widehat{\phi}\,\right)
    &\leq \epsilon^{-2}\var(R_1\given  (\widehat{\bZ}_j)_{j\in\mathcal{I}_3}, \widehat{\phi}) 
    \leq \frac{1-c}{\epsilon^2 c} \mathcal{E}_{2,t}^{(n)}
    \xrightarrow{P} 0,
\end{align*}
from which we conclude that $R_1\xrightarrow{P}0$.

The analysis of $R_2$ is similar. Observe that the summands are conditionally
i.i.d. given $
(\widehat{\bZ}_i)_{i\in\mathcal{I}_3},\widehat{\phi}$,
and $(T_i)_{i\in\mathcal{I}_3}$. They are also conditionally
mean zero since
\begin{align*}
  \ex\left[r_i^2
    \given (\widehat{\bZ}_j, T_j)_{j \in \mathcal{I}_3}, \widehat{\phi}
    \right]
   & =
  \left(\ex[Y_i \given (\widehat{\bZ}_j, T_j)_{j \in \mathcal{I}_3}, \widehat{\phi}]-g_{\widehat{\phi}}(t,\bW_i)\right)\left(
  \frac{\one(T_i=t)}{\widehat{m}_{\widehat{\phi}}(\bW_i)}
  -\frac{\one(T_i=t)}{m_{\widehat{\phi}}(\bW_i)}\right) \\
   & = 0,
\end{align*}
where we use that on the event $(T_i = t)$,
$$
\ex[Y_i \given \widehat{\bZ}_i, T_i, \widehat{\phi}\,] 
= \ex[Y_i \given \widehat{\bZ}_i, T_i=t, \widehat{\phi}\,] 
= g_{\widehat{\phi}}(t,\bW_i).
$$ 
To bound the conditional variance, note first that Assumption~\ref{asm:samples}
and Assumption~\ref{asm:AIPWconv}~(ii) imply
\begin{align*}
    \var\big(Y_i\given \widehat{\bZ}_i, T_i, \widehat{\phi}\,\big)
    \leq \ex\big[Y_i^2\given \widehat{\bZ}_i, T_i, \widehat{\phi}\,\big]
    = \ex[\ex\big[Y_i^2\given \bW_i, T_i, \widehat{\phi}\,\big]
        \given \widehat{\bZ}_i, T_i, \widehat{\phi}\,\big]
    \leq C,
\end{align*}
which in return implies that
\begin{align*}
    \var\left(
        r_i^2
        \given (\widehat{\bZ}_j, T_j)_{j \in \mathcal{I}_3}, \widehat{\phi}
    \right) 
    &= \one(T_i=t) \left(
        \frac{1}{\widehat{m}_{\widehat{\phi}}(\bW_i)}
        -\frac{1}{m_{\widehat{\phi}}(\bW_i)}
        \right)^2
        \var\left(Y_i\given \widehat{\bZ}_i, T_i, \widehat{\phi}\,\right) \\
    &\leq C \left(
        \frac{1}{\widehat{m}_{\widehat{\phi}}(\bW_i)}
        -\frac{1}{m_{\widehat{\phi}}(\bW_i)}
        \right)^2.
\end{align*}
It now follows from the conditional Chebyshev's inequality that
\begin{align*}
    \mathbb{P}(|R_2|>\epsilon \given (\widehat{\bZ}_j, T_j)_{j \in \mathcal{I}_3}, \widehat{\phi}) 
        &\leq \epsilon^{-2}\var(R_2 \given (\widehat{\bZ}_j, T_j)_{j \in \mathcal{I}_3}, \widehat{\phi}) \\
        &\leq \frac{C}{|\mathcal{I}_3|}\sum_{i\in\mathcal{I}_3}\left(
            \frac{1}{\widehat{m}_{\widehat{\phi}}(\bW_i)}
            -\frac{1}{m_{\widehat{\phi}}(\bW_i)}
            \right)^2
        \leq C \ell_c^2  \mathcal{E}_{1,t}^{(n)}.
\end{align*}
Assumptions~\ref{asm:AIPWconv}~(iii) implies
convergence to zero in probability, and from this we conclude that also $R_2
\xrightarrow{P}0$. 

We now turn to the discussion of the oracle term $U_{\widehat{\phi}}^{(n)}$.
From Assumption~\ref{asm:samples} it follows that
$(T_i,\bW_i,Y_i)_{i\in\mathcal{I}_3}\ind \widehat{\phi}$, and hence the
conditional distribution $U_{\widehat{\phi}}^{(n)} \given \widehat{\phi}=\phi$
is the same as that of $U_{\phi}^{(n)}$. We show that this distribution is
asymptotically Gaussian uniformly over all $\phi \in \mathcal{F}$. To see this,
definen for each $\phi$ the representation
$\bZ_{\phi} := \phi(\bW)$ and note that, the terms of $U_\phi^{(n)}$ are i.i.d.
with mean $\mu_t(\bZ_{\phi})$ and variance
$\mathbb{V}_t(\bZ_{\phi})$. Repeated applications of
the fact that $|a+b|^{p} \leq 2^{p-1}(|a|^{p} + |b|^{p})$ for any $a, b, \in
\mathbb{R}$ and $p\geq 1$ together with Assumption~\ref{asm:AIPWconv}~(i) yield
that 
\begin{align*}
  |u_{{\phi, i}}|^{2+\delta} &\leq 2^{1+{\delta}}\left(|g_\phi(t,\bW_i)|^{2+\delta} + c^{-(2+\delta)}\left|(Y_i-g_\phi(t,\bW_i))\right|^{2+\delta} \right)\\
   & \leq \left(2^{1+{\delta}} + 2^{2(1+\delta)}c^{-(2+\delta)}\right)|g_\phi(t,\bW_i)|^{2+\delta} + c^{-(2+\delta)}2^{2(1+\delta)} \left|Y_i\right|^{2+\delta}.
\end{align*}
We conclude that $\ex[|u_{{\phi, i}}|^{2+\delta}]$ is uniformly bounded over all
$\phi \in \mathcal{F}$ by conditional Jensen's inequality and Assumption~\ref{asm:AIPWconv}~(iii).

Thus we can apply the
Lindeberg-Feller CLT, as stated in \citet[Lem. 18]{shah2020hardness}, to
conclude that
\begin{equation*}
    S_{\phi}^{(n)} \coloneq \sqrt{|\mathcal{I}_3|}\mathbb{V}_t(\bZ_{\phi})^{-1/2}(U_{\phi}^{(n)} - \mu_t(\bZ_{\phi})) 
        \xrightarrow{d/\mathcal{F}} \mathrm{N}(0,1),
\end{equation*}
where the convergence holds uniformly over $\phi
\in \mathcal{F}$. With
$\Phi(\cdot)$ denoting the CDF of a standard normal and $P_{\widehat{\phi}}$
denoting the distribution of $\widehat{\phi}$, this implies that
\begin{align}\label{eq:uniformimpliescondtional}
    \sup_{t\in\real}|\mathbb{P}(S_{\widehat{\phi}}^{(n)} \leq t) - \Phi(t)|
    &= \sup_{t\in\real} \left\lvert\int (\mathbb{P}(S_{\widehat{\phi}}^{(n)} \leq t \given \widehat{\phi}=\phi)
    -\Phi(t)) \, P_{\widehat{\phi}}(\mathrm{d}\phi)\right\rvert 
    \nonumber \\
    &\leq \sup_{\phi \in \mathcal{F}} \sup_{t\in\real} |\mathbb{P}(S_{\phi}^{(n)}\leq t)-\Phi(t)|
    \longrightarrow 0.
\end{align}
This shows that $S_{\widehat{\phi}}^{(n)} \xrightarrow{d}
\mathrm{N}(0,1)$ as desired. The last part of the theorem is a simple
consequence of Slutsky's theorem. \hfill \qedsymbol{}


\subsection*{Proof of Theorem \ref{thm:varconsistent}} 

For each $i\in \mathcal{I}_3$, let $\widehat{u}_i =
u_{\widehat{\phi},i} + r_i^1 + r_i^2 + r_i^3$ be the
decomposition from \eqref{eq:DOPEdecomp}. For the squared sum, it is immediate
from Theorem \ref{thm:conditionalAIPWconv} that
\begin{align*}
    \left(\frac{1}{|\mathcal{I}_3|}
        \sum_{i\in \mathcal{I}_3}\widehat{u}_{i}\right)^2 
    = \left(\frac{1}{|\mathcal{I}_3|}
        \sum_{i\in \mathcal{I}_3} u_{\widehat{\phi},i}\right)^2 + o_P(1).
\end{align*}
For the sum of squares, we first expand the squares as
\begin{align}\label{eq:uhatexpansion}
    \widehat{u}_i^2 = u_{\widehat{\phi},i}^2 + 2(r_i^1 + r_i^2 + r_i^3) u_{\widehat{\phi},i} + (r_i^1 + r_i^2 + r_i^3)^2.
\end{align}
We show that the last two terms are convergent to zero in probability. For the
cross-term, we note by Cauchy-Schwarz that
\begin{align*}
    \Big|\frac{1}{|\mathcal{I}_3|}
        \sum_{i\in \mathcal{I}_3} (r_i^1 + r_i^2 + r_i^3) u_{\widehat{\phi},i} \Big|
    \leq 
    \left(\frac{1}{|\mathcal{I}_3|}
        \sum_{i\in \mathcal{I}_3} (r_i^1 + r_i^2 + r_i^3)^2\right)^{1/2}
    \left(\frac{1}{|\mathcal{I}_3|}
        \sum_{i\in \mathcal{I}_3} u_{\widehat{\phi},i}^2\right)^{1/2}.
\end{align*}
We show later that the sum $\frac{1}{|\mathcal{I}_3|}\sum_{i\in \mathcal{I}_3}
u_{\widehat{\phi},i}^2$ is convergent in probability. Thus, to show that
the last two terms of \eqref{eq:uhatexpansion} converge to zero, it suffices to
show that $\frac{1}{|\mathcal{I}_3|}\sum_{i\in \mathcal{I}_3} (r_i^1 + r_i^2 +
r_i^3)^2 \xrightarrow{P}0$. To this end, we observe that
\begin{align*}
    \frac{1}{|\mathcal{I}_3|}\sum_{i\in \mathcal{I}_3} (r_i^1 + r_i^2 + r_i^3)^2
    \leq 
    \frac{3}{|\mathcal{I}_3|}\sum_{i\in \mathcal{I}_3} (r_i^1)^2+(r_i^2)^2+(r_i^3)^2
\end{align*}
The last term is handled similarly to $R_3$ in the proof of
Theorem~\ref{thm:conditionalAIPWconv},
\begin{align*}
    \frac{1}{|\mathcal{I}_3|}\sum_{i\in \mathcal{I}_3} (r_i^3)^2
    &= 
        \frac{1}{|\mathcal{I}_3|}\sum_{i\in \mathcal{I}_3}
        \left(\widehat{g}_{\widehat{\phi}}(t,\bW_i)-g_{\widehat{\phi}}(t,\bW_i)\right)^2
        \left(\frac{\one(T_i=t)}{\widehat{m}_{\widehat{\phi}}(t \given \bW_i)}-\frac{\one(T_i=t)}{m_{\widehat{\phi}}(t \given \bW_i)}\right)^2 \\
    &\leq  \ell_c^2 n\mathcal{E}_1^{(n)}\mathcal{E}_{2,t}^{(n)} \xrightarrow{P} 0,
\end{align*}
where we have used the naive inequality $\sum a_i^2b_i^2 \leq (\sum a_i^2)(\sum
b_i^2)$ rather than Cauchy-Schwarz. From the proof of Theorem
\ref{thm:conditionalAIPWconv}, we have that 
\begin{align*}
    \ex[(r_i^1)^2 \given (\widehat{\bZ}_j)_{j\in\mathcal{I}_3}, \widehat{\phi}] = \var[r_i^1 \given (\widehat{\bZ}_j)_{j\in\mathcal{I}_3}, \widehat{\phi}]\leq \frac{1-c}{c}\left(\widehat{g}_{\widehat{\phi}}(t,\bW_i)-g_{\widehat{\phi}}(t,\bW_i)\right)^2,
\end{align*}
and hence the conditional Markov's inequality yields
\begin{align*}
    \mathbb{P}\left(\frac{1}{|\mathcal{I}_3|}\sum_{i\in \mathcal{I}_3}(r_i^1)^2>\epsilon \given (\widehat{\bZ}_j)_{j\in\mathcal{I}_3}, \widehat{\phi}\right) 
    &\leq \epsilon^{-1}\ex\left[\frac{1}{|\mathcal{I}_3|}\sum_{i\in \mathcal{I}_3}(r_i^1)^2 \given (\widehat{\bZ}_j)_{j\in\mathcal{I}_3}, \widehat{\phi}\right] \\
    &= \epsilon^{-1}\frac{1}{n}\sum_{i=1}^n \ex\left[(r_i^1)^2 \given (\widehat{\bZ}_j)_{j\in\mathcal{I}_3}, \widehat{\phi}\right] \\
    &\leq \frac{1-c}{\epsilon c}\mathcal{E}_{2,t}^{(n)} \xrightarrow{P} 0.
\end{align*}
Thus we also conclude that $\frac{1}{|\mathcal{I}_3|}\sum_{i\in
\mathcal{I}_3}(r_i^1)^2 \xrightarrow{P} 0$. Analogously, the final remainder
$\frac{1}{|\mathcal{I}_3|}\sum_{i\in \mathcal{I}_3} (r_i^2)^2$ can be shown to
converge to zero in probability by leveraging the argument for
$R_2\xrightarrow{P}0$ in the proof of Theorem \ref{thm:conditionalAIPWconv}.

Combining the arguments above we conclude that 
\begin{align*}
    \widehat{\mathcal{V}}_t(\widehat{g},\widehat{m}) 
    = \frac{1}{|\mathcal{I}_3|}\sum_{i\in \mathcal{I}_3} u_{\widehat{\phi},i}^2
        - \Big(\frac{1}{|\mathcal{I}_3|}
        \sum_{i\in \mathcal{I}_3}u_{\widehat{\phi},i}\Big)^2
        + o_P(1) 
\end{align*}
As noted in the proof of Theorem~\ref{thm:conditionalAIPWconv}, for each
$\phi \in \mathcal{F}$ the terms $\{u_{\phi,i}\}_{i\in \mathcal{I}_3}$ are i.i.d
with $\ex[|u_{\phi},i|^{2+\delta}]$ uniformly bounded. Hence,
the uniform law of large numbers, as stated in \citet[Lem.
19]{shah2020hardness}, implies that
\begin{align*}
    \frac{1}{|\mathcal{I}_3|}\sum_{i\in \mathcal{I}_3} u_{\phi,i}
    - \ex[u_{\phi,1}] \xrightarrow{P/\mathcal{F}} 0,
    \qquad
    \frac{1}{|\mathcal{I}_3|}\sum_{i\in \mathcal{I}_3} u_{\phi,i}^2 
    - \ex[u_{\phi,1}^2] \xrightarrow{P/\mathcal{F}} 0,
\end{align*}
where the convergence in probability holds uniformly over $\phi \in \mathcal{F}$.
Since $\var(u_{\phi,1})=\var(\psi_t(\bZ_{\phi}))=\mathbb{V}_t(\bZ_{\phi})$,
this lets us conclude that
\begin{align*}
  \widetilde{S}_{\phi}^{(n)} \coloneq
  \frac{1}{|\mathcal{I}_3|}\sum_{i\in \mathcal{I}_3} u_{\phi,i}^2
    - \Big(\frac{1}{|\mathcal{I}_3|}\sum_{i\in \mathcal{I}_3} u_{\phi,i}\Big)^2 
    - \mathbb{V}_t(\bZ_{\phi})
    \xrightarrow{P/\mathcal{F}} 0.
\end{align*}
We now use that convergence in distribution is equivalent to convergence in
probability for deterministic limit variables, see \citet[Cor.
B.4.]{christgau2023nonparametric} for a general uniform version of the
statement. Since Assumption~\ref{asm:samples} implies that the conditional
distribution $\widetilde{S}_{\widehat{\phi}}^{(n)}\given
\widehat{\phi}=\phi$ is the same as
$\widetilde{S}_{\phi}^{(n)}$, the computation in
\eqref{eq:uniformimpliescondtional} with $\Phi(\cdot)=\one(\cdot \geq 0)$ yields
that $\widetilde{S}_{\widehat{\phi}}^{(n)}\xrightarrow{d}0$. This lets us
conclude that
\begin{equation*}
    \widehat{\mathcal{V}}_t(\widehat{g},\widehat{m}) 
    = \widetilde{S}_{\widehat{\phi}}^{(n)} + o_P(1) \xrightarrow{P} 0,
\end{equation*}
which finishes the proof.
\hfill \qedsymbol{}

\subsection{Proof of Theorem \ref{thm:SIregularity}} 
We fix $P\in\mathcal{P}$ and suppress it from notation throughout the proof.
Given $\theta\in \real^d$, the single-index model assumption
\eqref{eq:singleindex} implies that
\begin{align*}
    b_t(\bW^\top \theta) 
    = \ex[Y \given T=t, \bW^\top \theta]
    = \ex[h_t(\bW^\top \theta_P) \given T=t, \bW^\top \theta].
\end{align*}
Now since $h_t \in C^1(\mathbb{R})$, we may write
\begin{align*}
    h_t(\bW^\top \theta) 
    &= h_t(\bW^\top \theta_P) + R(\theta,\theta_P),\\
    R(\theta,\theta_P) 
    &=
    h_t'(\bW^\top \theta_P) \bW^\top (\theta - \theta_P)
    + r(\bW^\top(\theta - \theta_P))\bW^\top (\theta - \theta_P)
\end{align*}
for a continuous function $r\colon \mathbb{R} \to \mathbb{R}$ satisfying that
$\lim_{\epsilon \to 0}r(\epsilon) = 0$. It follows that
\begin{align*}
    \ex[h_t(\bW^\top \theta_P) \given T=t, \bW^\top \theta] 
    = h_t(\bW^\top \theta) 
    - \ex[R(\theta,\theta_P) \given T=t, \bW^\top \theta],
\end{align*}
The theoretical bias induced by adjusting for $\bW^\top\theta$ instead of $\bW$,
or equivalently $\bW^\top \theta_P$, is therefore
\begin{align*}
  \mu_t(\bW^\top \theta;P) - \mu_t(\bW)  
    &= \mu_t(\bW^\top \theta;P) - \mu_t(\bW^\top \theta_P)\\
    &= \ex\left[h_t(\bW^\top \theta) 
        - \ex[R(\theta,\theta_P) \given T=t, \bW^\top \theta]
        - h_t(\bW^\top \theta_P)
        \right] \\
    &= \ex[
        R(\theta,\theta_P)
        -\ex[R(\theta,\theta_P)\given T=t,\bW^\top \theta]] \\
    &= 
    \ex\left[
        R(\theta,\theta_P) \cdot
        \left(
        1
        -
        \frac{\one(T=t)}{\mathbb{P}(T=t\given \bW^\top \theta)}
        \right)
    \right] \\
    &= 
        C(\theta,\theta_P)^\top (\theta - \theta_P),
\end{align*}
where 
\begin{align*}
    C(\theta;\theta_P) = 
    \ex\left[
        \left(
            h_t'(\bW^\top \theta)
            + r(\bW^\top(\theta - \theta_P))
        \right)
        \left(
        1
        -
        \frac{\one(T=t)}{\mathbb{P}(T=t\given \bW^\top \theta)}
        \right)
        \bW
    \right].
\end{align*}
To show that $u$ is differentiable at $\theta = \theta_P$, it therefore suffices
to show that the mapping $\theta \mapsto C(\theta;\theta_P)$ is continuous at
$\theta_P$. 

We first show that $\mathbb{P}(T=t \given \bW^\top \theta)$ is continuous at
$\theta_P$ almost surely, which follows after applying a coordinate change such
that $\theta$ becomes a basis vector. To be more precise, we may choose a
neighborhood $\mathcal{U}\subset \real^d$ of $\theta_P$ and $d-1$ continuous
functions 
\[
    q_2, \ldots, q_d \colon \mathcal{U}\longrightarrow \mathcal{S}^{d-1}
\]
such that $Q(\theta) \coloneq (\lVert\theta \rVert^{-1}\theta,
q_2(\theta),\ldots q_d(\theta))$ is an orthogonal matrix for every $\theta\in
\mathcal{U}$. For example, the first $d$ vectors of the Gram-Schmidt process
applied to $(\theta,\mathbf{e}_1,\ldots, \mathbf{e}_d)$ is continuous and yields
an orthonormal basis for $\theta\in \real^d \setminus
\operatorname{span}(\mathbf{e}_d)$, so this works in case $\theta_P\neq \pm
\mathbf{e}_d$. Let $Z(\theta) = Q(\theta)^\top \bW$ and note that $Z(\theta)_1 =
\lVert\theta \rVert^{-1} \theta^\top \bW =\lVert\theta \rVert^{-1}\bW^\top
\theta$.
Then by iterated expectations,
\begin{align*}
    &\mathbb{P}(T=t \given \bW^\top \theta) 
    = \ex[m(t\given \bW)\given \bW^\top \theta] \\
    &= \ex[m(t\given Q(\theta)Z(\theta))\given Z(\theta)_1] \\
    &= 
    \frac{
        \int_{\real^{d-1}} 
        m(t\given \widetilde{\bW}(\theta,z))
        p_{\bW}(\widetilde{\bW}(\theta,z))
        \mathrm{d}z
    }{
        \int_{\real^{d-1}}
        p_{\bW}(\widetilde{\bW}(\theta,z))
        \mathrm{d}z
    },
    \qquad 
    \widetilde{\bW}(\theta,z) \coloneq 
    Q(\theta)
    \begin{pmatrix}
        Z(\theta)_1 \\ z
    \end{pmatrix}
\end{align*}
Each integrand is bounded and continuous over $\theta\in \mathcal{U}$. From
dominated convergence it follows that $\mathbb{P}(T=t \given \bW^\top \theta)$
is continuous at $\theta_P$ almost surely. It follows that almost surely,
\begin{align*}
    &\left(
            h_t'(\bW^\top \theta)
            + r(\bW^\top(\theta - \theta_P))
    \right)
    \left(
    1
    -
    \frac{\one(T=t)}{\mathbb{P}(T=t\given \bW^\top \theta)}
    \right) 
    \bW \\
    &\longrightarrow
    h_t'(\bW^\top \theta_P)
    \left(
    1
    -
    \frac{\one(T=t)}{\mathbb{P}(T=t\given \bW^\top \theta_P)}
    \right)\bW,
    \qquad \text{for }\theta \to \theta_P. 
\end{align*}
By dominated convergence again we conclude that
\begin{align*}
    C(\theta;\theta_P) \longrightarrow
    \ex\left[
     h_t'(\bW^\top \theta_P)
    \left(
    1
    -
    \frac{\one(T=t)}{\mathbb{P}(T=t\given \bW^\top \theta_P)}
    \right)\bW\right], \qquad \theta \to \theta_P.
\end{align*}
This shows that $\theta \mapsto C(\theta;\theta_P)$ is continuous at $\theta_P$,
and hence we conclude that $u$ is differentiable at $\theta=\theta_P$ with
gradient
\begin{align*}
    \nabla u (\theta_P) &=
    \ex\left[
     h_t'(\bW^\top \theta_P)
    \left(
    1
    -
    \frac{\one(T=t)}{\mathbb{P}(T=t\given \bW^\top \theta_P)}
    \right)\bW\right] \\
    &=
    \ex\left[
     h_t'(\bW^\top \theta_P)
    \left(
    1
    -
    \frac{\mathbb{P}(T=t\given \bW)}{\mathbb{P}(T=t\given \bW^\top \theta_P)}
    \right)\bW\right].
\end{align*}
\hfill \qedsymbol{}

\section{Details of simulation study}\label{sup:simdetails}
\subsection{Details of estimators}
Our experiments were conducted in Python \citep{van1995python}. Linear and
logistic regression was imported from the \texttt{scikit-learn} package
\citep{scikit-learn}, and the neural network for DOPE-IDX was
implemented using \texttt{pytorch} \citep{paszke2019pytorch}.

\begin{figure}
    \centering
    \includegraphics[width=0.6\linewidth]{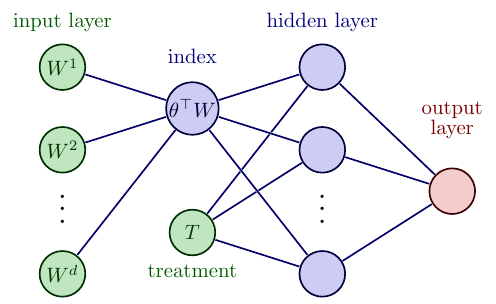}
    \caption{Neural network architecture for single-index model.}
    \label{fig:NNarchitecture}
\end{figure}
The neural network architecture is illustrated in
Figure~\ref{fig:NNarchitecture}. The network was optimized using MSE loss and
the ADAM optimizer with \texttt{lr=1e-3} and \texttt{n\_iter=1200} in the
simulation experiment. For the NHANES application the settings were similar, but
with BCELoss and \texttt{n\_iter=3000}.
The Logistic regression for the propensity
score was fitted without $\ell_2$-penalty and optimized using the \texttt{lbfgs}
optimizer. The propensity score was clipped to the interval $(0.01,0.99)$ for
all estimators of the adjusted mean.

\subsection{Additional results from the simulation study in Section~\ref{sec:simulation}}
\label{sec:extra_simulations}

Figures~\ref{fig:RMSE_stratified_square_sin} and \ref{fig:RMSE_joint_square_sin}
below show the results of the simulation study in Section~\ref{sec:simulation}
with the square and sin link functions with stratified and joint regression,
respectively. The results follow the same patterns described in the main text.

\begin{figure}
  \centering
  \includegraphics[width=\linewidth]{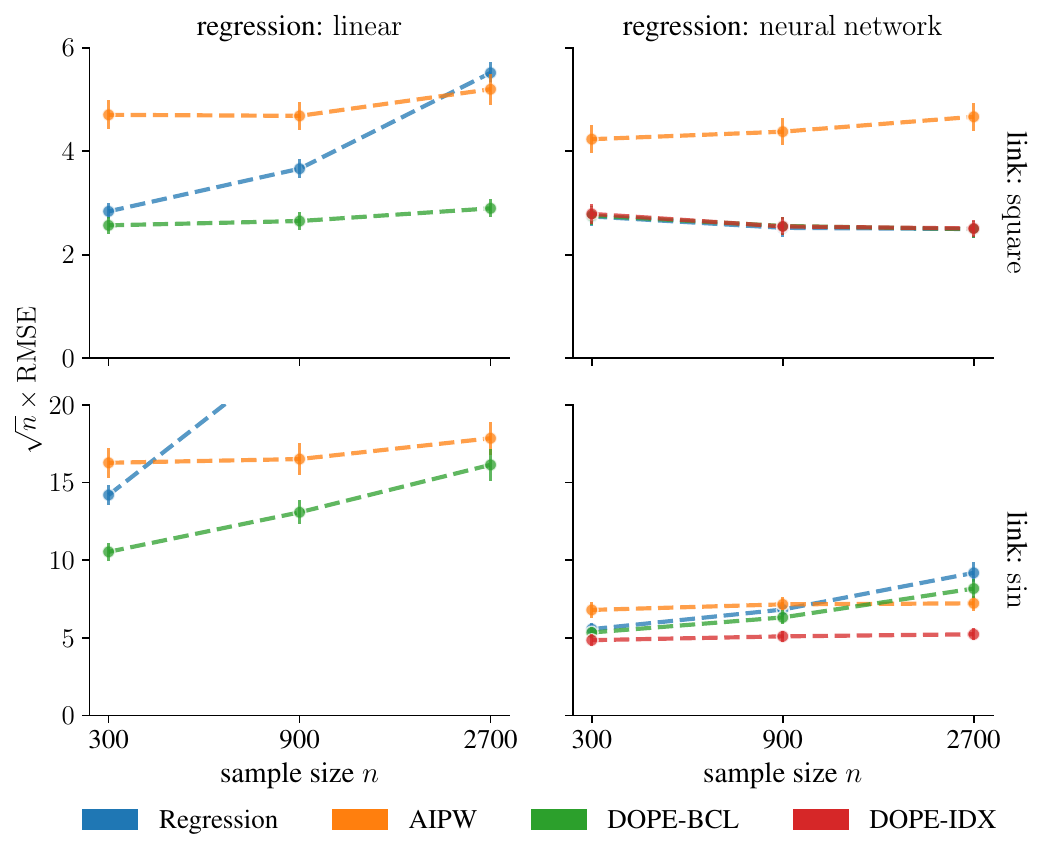}
  \caption{Root mean squared errors (RMSE) for various estimators of $\mu_1$
  plotted against sample size. Each data point is an average over 900 datasets.
  The bars around each point correspond to asymptotic $95\%$ confidence
  intervals based on the CLT. The dashed lines are only included as visual aids
  to make it easier to spot trends across sample sizes. For this plot, the
  outcome regression was fitted separately for each stratum
  $T=0$ and $T=1$.}
  \label{fig:RMSE_stratified_square_sin}
\end{figure}

\begin{figure}
  \centering
  \includegraphics[width=\linewidth]{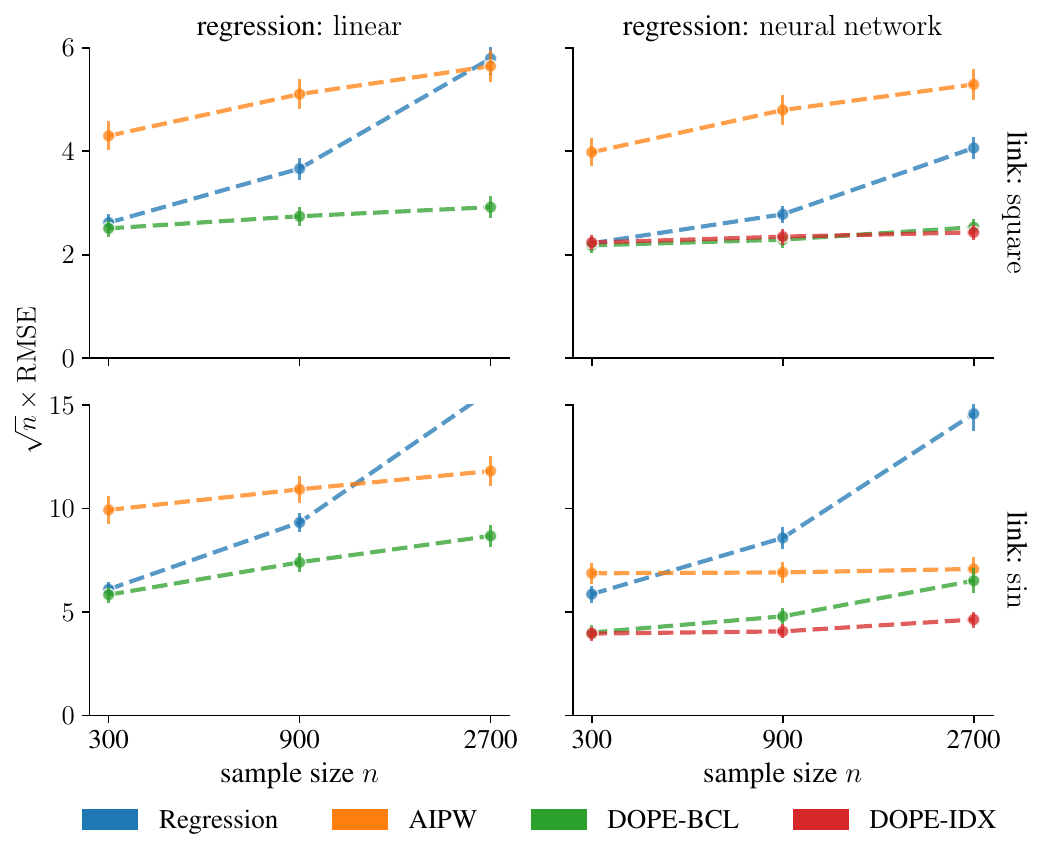}
  \caption{Root mean squared errors (RMSE) for various estimators of $\mu_1$
  plotted against sample size. Each data point is an average over 900 datasets.
  The bars around each point correspond to asymptotic $95\%$ confidence
  intervals based on the CLT. The dashed lines are only included as visual aids
  to make it easier to spot trends across sample sizes. For this plot, the
  outcome regression was fitted jointly onto $(T,\bW)$.}
  \label{fig:RMSE_joint_square_sin}
\end{figure}

Figures~\ref{fig:crossfit_stratified_lin_cbrt} and
\ref{fig:crossfit_joint_lin_cbrt} show the ratio of the RMSE of the
estimators with and without cross-fitting in the settings with linear and cube
root links, for the stratified and joint regressions, respectively.
Figures~\ref{fig:crossfit_stratified_square_sin} and
\ref{fig:crossfit_joint_square_sin} are the corresponding plots for the
square and sin links. We see that generally the cross-fitted estimators perform
worse or no better than the estimators without cross-fitting. 

\begin{figure}
  \centering
  \includegraphics[width=\linewidth]{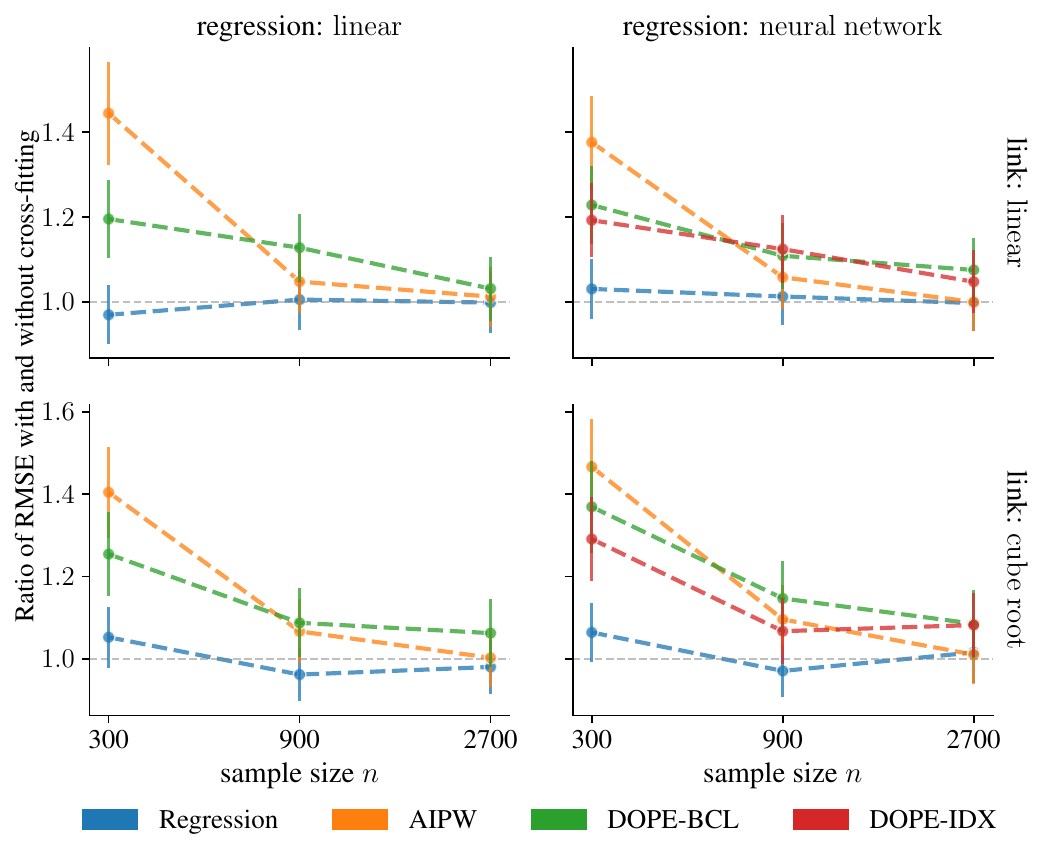}
  \caption{Ratio of root mean squared errors (RMSE) with and without
  cross-fitting for various estimators of $\mu_1$ plotted against sample size.
  Each data point is an average over 900 datasets. The bars around each point
  correspond to asymptotic $95\%$ confidence intervals based on the CLT and the
  Delta method. The dashed lines are only included as visual aids to make it
  easier to spot trends across sample sizes. For this plot, the outcome
  regression was fitted separately for each stratum $T=0$ and $T=1$.}
  \label{fig:crossfit_stratified_lin_cbrt}
\end{figure}

\begin{figure}
  \centering
  \includegraphics[width=\linewidth]{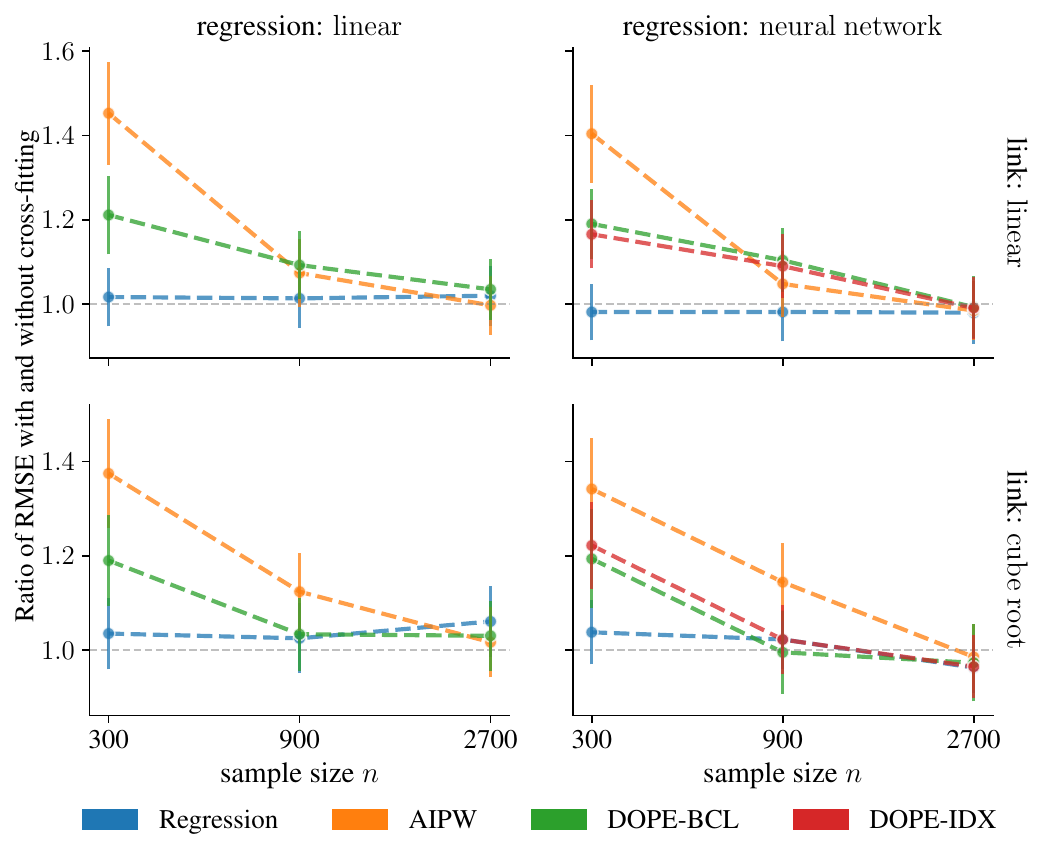}
  \caption{Ratio of root mean squared errors (RMSE) with and without
  cross-fitting for various estimators of $\mu_1$ plotted against sample size.
  Each data point is an average over 900 datasets. The bars around each point
  correspond to asymptotic $95\%$ confidence intervals based on the CLT and the
  Delta method. The dashed lines are only included as visual aids to make it
  easier to spot trends across sample sizes. For this plot, the
  outcome regression was fitted jointly onto $(T,\bW)$.}
  \label{fig:crossfit_joint_lin_cbrt}
\end{figure}

\begin{figure}
  \centering
  \includegraphics[width=\linewidth]{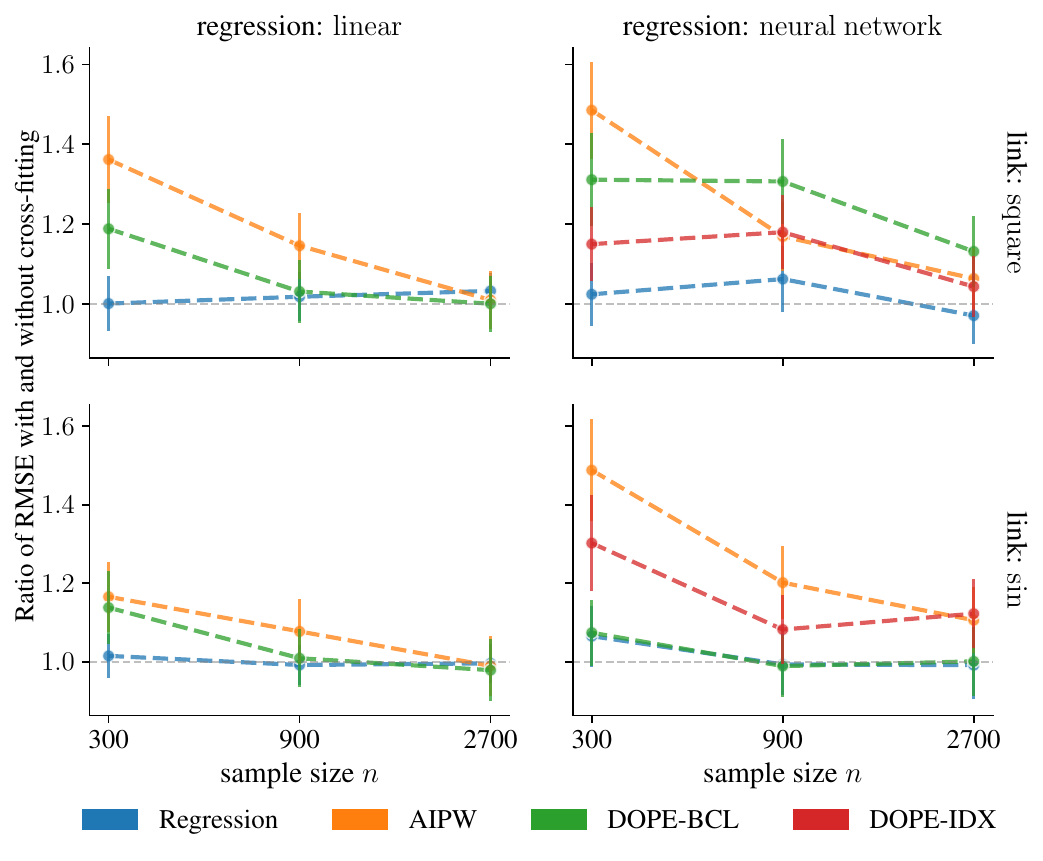}
  \caption{Ratio of root mean squared errors (RMSE) with and without
  cross-fitting for various estimators of $\mu_1$ plotted against sample size.
  Each data point is an average over 900 datasets. The bars around each point
  correspond to asymptotic $95\%$ confidence intervals based on the CLT and the
  Delta method. The dashed lines are only included as visual aids to make it
  easier to spot trends across sample sizes. For this plot, the outcome
  regression was fitted separately for each stratum $T=0$ and $T=1$.}
  \label{fig:crossfit_stratified_square_sin}
\end{figure}

\begin{figure}
  \centering
  \includegraphics[width=\linewidth]{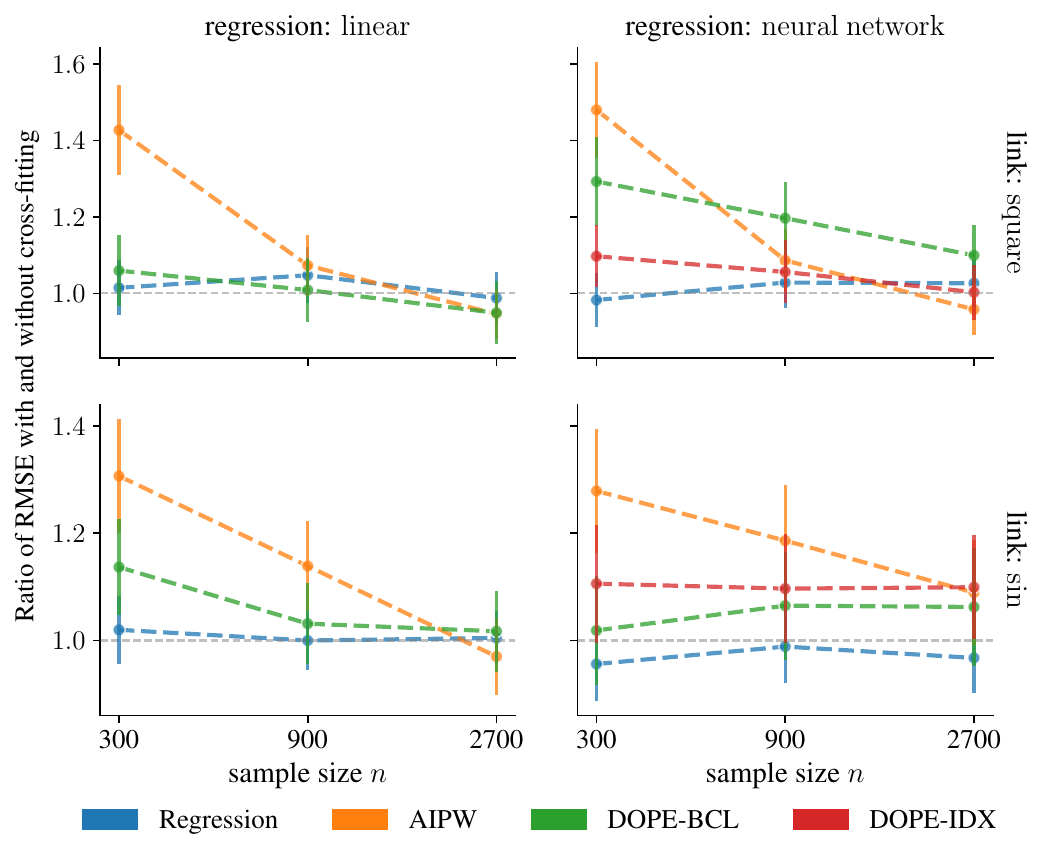}
  \caption{Ratio of root mean squared errors (RMSE) with and without
  cross-fitting for various estimators of $\mu_1$ plotted against sample size.
  Each data point is an average over 900 datasets. The bars around each point
  correspond to asymptotic $95\%$ confidence intervals based on the CLT and the
  Delta method. The dashed lines are only included as visual aids to make it
  easier to spot trends across sample sizes. For this plot, the
  outcome regression was fitted jointly onto $(T,\bW)$.}
  \label{fig:crossfit_joint_square_sin}
\end{figure}

\subsection{Additional results from the NHANES application in
Section~\ref{sec:nhanes}
\label{sup:nhanesdetails}
}

\begin{table}[htb!]
    \centering
    \begin{tabular}{lrrr}
\toprule
Estimator & Estimate & BS se & BS CI \\
\midrule
Regr. (Logistic) & $0.027$ & $0.009$ & $(0.012, 0.048)$ \\
DOPE-BCL (Logistic) & $0.024$ & $0.010$ & $(0.004, 0.040)$ \\
Naive contrast & $0.388$ & $0.010$ & $(0.369, 0.407)$ \\
DOPE-BCL (NN) & $0.010$ & $0.012$ & $(0.000, 0.045)$ \\
Regr. (NN) & $0.022$ & $0.012$ & $(0.003, 0.052)$ \\
DOPE-IDX (NN) & $0.016$ & $0.013$ & $(0.002, 0.050)$ \\
AIPW (Logistic) & $0.022$ & $0.016$ & $(-0.013, 0.051)$ \\
AIPW (NN) & $0.019$ & $0.017$ & $(-0.017, 0.048)$ \\
IPW (Logistic) & $-0.046$ & $0.027$ & $(-0.118, -0.010)$ \\
\bottomrule
\end{tabular}

    \vspace{\baselineskip}
    \caption{Treatment effect estimates for NHANES dataset, where covariates 
    with more than 50\% missing values have been removed. }
    \label{tab:NHANES_pruned_version}
\end{table}

Table~\ref{tab:NHANES_pruned_version} corresponds to
Table~\ref{tab:pulsepressure} in the main manuscript, but where covariates with
more than $50\%$ missing data have been removed rather than imputed. The
results are broadly similar to those in the main manuscript.

\section{Extension to cross-fitting} \label{sup:crossfitting}
\begin{algorithm} \caption{Cross-fitted DOPE} \label{alg:crossfitted}
  \textbf{input}: observations $(T_i,\bW_i,Y_i)_{i\in[n]}$, partition $[n] = J_1\cup \cdots \cup J_K$\;
  \textbf{options}: integer $1\leq m\leq K-2$ and options for Algorithm \ref{alg:generalalg}\;
  \For{$k=1,\ldots,K$}{
    Set $\mathcal{I}_3=J_k$, $\mathcal{I}_1=\bigcup_{l=k+1}^{k+m}J_l$ and 
    $\mathcal{I}_2 = [n]\setminus (\mathcal{I}_1 \cup \mathcal{I}_3)$\;
    
    Compute
    $\widehat{\mu}_{t,k}^{\textsc{DOPE}}$
    as the output of Algorithm~\ref{alg:generalalg}\;
    
    Compute variance estimate $\widehat{\mathcal{V}}_{t,k}$ given as in
    \eqref{eq:varestimator}\; }

  \Return{$\widehat{\mu}_{t}^{\mathsf{x}} \coloneq \frac{1}{K} \sum_{k=1}^K \widehat{\mu}_{t,k}^{\textsc{DOPE}}$ 
  and $\widehat{\mathcal{V}}_{t}^{\mathsf{x}} \coloneq \frac{1}{K} \sum_{k=1}^K\widehat{\mathcal{V}}_{t,k}$}
\end{algorithm}
A cross-fitting procedure for DOPE is described in
Algorithm~\ref{alg:crossfitted}, which computes both a cross-fitted version of
DOPE and its variance estimator. Here the indices of the folds are
understood to cycle modulo $K$ such that $J_{K+1}=J_1$ and so forth. This
version of cross-fitting with three index sets has also been referred to as
`double cross-fitting' by \citet{zivich2021machine}.

We note that the `standard arguments' for establishing convergence of the
cross-fitted estimator cannot be applied directly to our case. This is because,
for each fold $k\in[K]$, the corresponding oracle terms
$U_{\widehat{\phi}_k}^{(n)}$ do not only involve the
data indexed by $\mathcal{I}_3^k$, but also depend on
$\widehat{\phi}_k$ which is estimated from data indexed
by $\mathcal{I}_1^k$. Hence the oracle terms are not independent. However, we
believe that this dependency should be negligible, and perhaps the convergence
can be established under a more refined theoretical analysis.

\end{document}